\newtheorem{thm}{Theorem}[section]
\newtheorem{cor}[thm]{Corollary}
\newtheorem{lem}[thm]{Lemma}
\newtheorem{mydef}[thm]{Definition}
\newtheorem{rem}[thm]{Remark}
\newtheorem{ex}[thm]{Example}
\newtheorem{question}[thm]{Question}
\newtheorem{prop}[thm]{Proposition}
\newcommand{\xRightarrow}[2][]{\ext@arrow 0359\Rightarrowfill@{#1}{#2}}
\newtheorem{thmmain}{Main Theorem}
\begin{document}

\title{A partial comparison of stability notions in K\"ahler geometry}
\author{Zakarias Sj\"ostr\"om Dyrefelt}
\email{zsjostro@ictp.it}
\address{The Abdus Salam International Centre for Theoretical Physics (ICTP), Str. Costiera, 11, 34151 Trieste TS, Italy.}

\begin{abstract}
In this follow up work to \cite{SD1, DervanRoss, Dervanrelative, SD2} we introduce and study a notion of geodesic stability restricted to rays with prescribed singularity types. A number of notions of interest fit into this framework, in particular algebraic- and transcendental K-polystability, equivariant K-polystability, and the geodesic K-polystability notion introduced by the author in \cite{SD2}. We provide a partial comparison of the above notions, and show equivalence of some of these notions provided that the underlying manifold satisfies a certain \emph{weak cscK} condition. As an application this proves K-polystability of a new family of cscK manifolds with irrational polarization. 
\end{abstract}

\maketitle

\section{Introduction}

\noindent 
A fundamental open problem in K\"ahler geometry is the Yau-Tian-Donaldson (YTD) conjecture, which predicts that existence of canonical metrics (in the sense of Calabi \cite{Calabiextremal}) should be equivalent to a suitable stability notion in algebraic geometry. In the case of Fano manifolds $(X,-K_X)$ equipped with the anticanonical polarization, the conjecture has been proven with respect to the classical algebraic notion of K-stability with roots in geometric invariant theory \cite{CDSone, CDStwo, CDSthree}. 
For polarized manifolds $(X,L)$, or even completely arbitrary K\"ahler manifolds $(X,\omega)$, finding the precise stability notion that makes the conjecture hold is then a central part of the problem. Indeed, it is debated which ones of a rapidly growing number of proposed stability notions (transcendental/algebraic/equivariant/filtration/uniform K-stability) should be the most relevant to the conjecture, and the relationship between these a priori differing notions is largely unexplored. In this follow up work to \cite{SD1, DervanRoss, Dervanrelative, SD2} we aim to address this knowledge gap, by means of comparing some of the central stability notions in K\"ahler geometry to each other.

We will focus on the case of constant scalar curvature K\"ahler (cscK) metrics and related algebro-geometric stability notions. In particular we will investigate possible natural comparisons with the geodesic stability notion used in the recent proof of the properness conjecture, due to Chen-Cheng \cite{ChenChengII, ChenChengIII}. 
In this context, a notion of special interest to us is the notion of \emph{geodesic K-polystability}, which was introduced in \cite{SD2}. 
This is a new notion that means that $(X,[\omega])$ is K-semistable, and moreover, the Donaldson-Futaki invariant vanishes precisely for the test configurations whose ``associated geodesic ray'' is induced by a holomorphic vector field  (in a sense made precise in the aforementioned paper \cite{SD2}). As such, it can be interpreted as a weaker version of the geodesic stability notion used by Chen-Cheng \cite{ChenChengII}, which is is in turn known to be equivalent to existence of cscK metrics \cite{ChenChengI, ChenChengII, ChenChengIII}. This is particularly interesting in order to better understand the relationship between geodesic stability and the algebraic notions of K-polystability. Ultimately, such a comparison is precisely what is required to prove or disprove the YTD conjecture. 

In view of the classical correspondence between geodesic rays and test configurations, see e.g. \cite{ArezzoTian, PRS, Berman, SD1, SD2, DervanRoss, ChenTang, WR, BDL, BHJ1} and references therein, there are a number of reasons to believe that geodesic K-polystability is a natural stability notion. First of all, it was proven in \cite{BDL, SD2} that constant scalar curvature K\"ahler (cscK) manifolds are geodesically K-polystable, thus proving one direction of a natural YTD conjecture in this setting. Moreover, if $\mathrm{Aut}_0(X) = \emptyset$ and the underlying class is cscK, then geodesic K-polystability is equivalent to the usual K-polystability notion \cite{SD2}. It was also checked by R. Dervan in an appendix to \cite{SD2} that geodesic K-polystability implies equivariant K-polystability (as introduced in his paper \cite{Dervanrelative}), generalizing a notion introduced in \cite{Donaldsontoric, GaborStoppa}, which is conjectured to be equivalent to the cscK condition. 
When $\mathrm{Aut}_0(X) \neq \emptyset$ or the underlying K\"ahler class is not cscK, the relationship to the full non equivariant K-polystability notion however remains an open problem (of importance to understanding the YTD conjecture).

To study the above stability notions we introduce the terminology of \emph{stability loci} in the K\"ahler cone: Denote by \emph{K-polystable locus} the set of K\"ahler classes $\alpha$ in the K\"ahler cone of $X$ such that $(X,\alpha)$ is K-polystable, and use similar terminology for other stability notions. Likewise, we say that the \emph{cscK locus} is the set of all K\"ahler classes $\alpha$ on $X$ which contain a cscK metric. In particular, the YTD conjecture then translates to the statement that the cscK locus coincides with the K-polystable locus. This way stability may be considered not as a question on a single given polarization, but as a question about characterizing a certain subset of the K\"ahler cone. This is sometimes a useful point of view, as we shall see in this note. We may in particular ask the following broad but central questions:
How can we compare the cscK locus and the various stability loci? What stability loci coincide in the K\"ahler cone of $X$ (i.e. which stability notions are equivalent)? 
In this note we will give some partial answers to the second part of this question, and set up the framework for continuing to study such problems in future work.

\subsection{Comparing geodesic stability and K-stability}

The main results of this paper are partial results towards comparing geodesic stability in the sense of Chen-Cheng \cite{ChenChengIII} with classical K-stability notions, as well as (transcendental) K-polystability of $(X,[\omega])$ in the sense of \cite{SD2}. 
The status of the comparison problem for stability notions in K\"ahler geometry is as follows: For arbitrary compact K\"ahler manifolds $(X,\omega)$ (such that the associated K\"ahler class $[\omega] \in H^{1,1}(X,\mathbb{R})$ is possibly irrational) we have inclusions $$\text{cscK locus} \subseteq \text{Geodesically K-polystable locus},$$ and it is straightforward to see that K-polystable locus $\subseteq$ Geodesically K-polystable locus. 
It is however open whether the cscK locus is included in the K-polystable locus, and it is unknown what is the precise relationship between the geodesically K-polystable locus and the K-polystable locus (especially if the underlying class does not admit a cscK metric). These are questions that concern the relationship between test configurations and their associated geodesic rays in the space of K\"ahler metrics. Indeed, the problem here posed is equivalent to asking if a test configuration is a product (in the sense that $\mathcal{X}_{\pi^{-1}(\mathbb{C})} \simeq X \times \mathbb{C}$) precisely if its associated geodesic ray is induced by a holomorphic vector field on $X$ (Definition \ref{Definition induced vector field geodesic}). What is immediately clear is that if there is a geodesic ray induced by a holomorphic vector field coming from a non-product configuration, then the underlying polarization $(X,[\omega])$ is not K-polystable, but it is a priori unclear what happens for other classes in the K\"ahler cone of $X$. 
As a first main result, we prove the following: 

\begin{thmmain} \label{Theorem dichotomy main}
Let $(X,\omega)$ be a compact K\"ahler manifold and suppose that the K-polystable locus $\neq \emptyset$. Then 
$(X,[\omega])$ is K-polystable if and only if it is geodesically K-polystable.
\end{thmmain}

\noindent In particular, this gives a partial answer to the question of comparing K-polystability and geodesic K-polystability. 
In light of \cite[Theorem 1.1]{SD2} we also have the following first result of K-polystability for cscK manifolds that are not necessarily polarized and are allowed to admit holomorphic vector fields:


\begin{cor} \label{Corollary main}
Let $(X,\omega)$ be a cscK manifold with K-polystable locus $\neq \emptyset$. 
Then $(X,[\omega])$ is K-polystable. 
\end{cor}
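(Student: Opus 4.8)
The plan is to read Corollary \ref{Corollary main} as an immediate combination of Main Theorem \ref{Theorem dichotomy main} with the already established fact that cscK manifolds are geodesically K-polystable. First I would recall \cite[Theorem 1.1]{SD2} (building on \cite{BDL}): if $(X,\omega)$ is a cscK manifold, then $(X,[\omega])$ is geodesically K-polystable, and crucially this holds with no rationality hypothesis on $[\omega] \in H^{1,1}(X,\mathbb{R})$ and without any assumption on $\mathrm{Aut}_0(X)$. So the geodesic K-polystability of $(X,[\omega])$ comes for free from the cscK hypothesis.

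Next, since we are assuming that the K-polystable locus of $X$ is nonempty, the standing hypothesis of Main Theorem \ref{Theorem dichotomy main} is satisfied for $X$. Applying that theorem to the class $[\omega]$ gives the equivalence ``$(X,[\omega])$ is K-polystable $\iff$ $(X,[\omega])$ is geodesically K-polystable.'' Chaining this with the previous paragraph yields that $(X,[\omega])$ is K-polystable, which is the assertion of the corollary.

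As to where the difficulty lies: essentially all of it is already packaged into Main Theorem \ref{Theorem dichotomy main}, so the corollary itself is a short deduction rather than a new argument. The only points that require a moment's care are (i) that the implication ``cscK $\Rightarrow$ geodesically K-polystable'' is invoked in its transcendental form, valid for possibly irrational $[\omega]$ and in the presence of holomorphic vector fields — which is exactly why one cites \cite[Theorem 1.1]{SD2} (and \cite{BDL}) rather than an algebraic statement; and (ii) that the notion of geodesic K-polystability appearing in Main Theorem \ref{Theorem dichotomy main} is literally the same one appearing in \cite[Theorem 1.1]{SD2}, so that the two inputs compose. Once the paper's standing conventions fix this notion uniformly, no further verification is needed and the proof is complete.
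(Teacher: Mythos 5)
Your proposal is correct and matches the paper's own route: the corollary is stated immediately after Main Theorem \ref{Theorem dichotomy main} precisely ``in light of \cite[Theorem 1.1]{SD2}'', i.e.\ cscK $\Rightarrow$ geodesically K-polystable in the transcendental setting, combined with the equivalence of K-polystability and geodesic K-polystability under the nonempty K-polystable locus hypothesis. Nothing is missing.
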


\noindent Note that his proves one direction of the YTD conjecture for a new family of compact K\"ahler manifolds $(X,\omega)$ with irrational polarization, i.e. when $[\omega] \in H^{1,1}(X,\mathbb{R})$ is an arbitrary K\"ahler class on $X$ not necessarily in the rational lattice $H^2(X,\mathbb{Q})$. 
As part of the proof we in particular obtain the following result, which sheds additional light on the connection between geodesic rays and test configurations, extending results of \cite{BDL}. 

\begin{thm} \label{Theorem dichotomy}
Let $(X,\omega)$ be a compact K\"ahler manifold and suppose that the K-polystable locus $\neq \emptyset$. Suppose that $(\mathcal{X}, \mathcal{A})$ is a test configuration for $(X,[\omega])$. Then the following are equivalent:
\begin{itemize}
\item$\mathcal{X}_{\pi^{-1}(\mathbb{C})} \simeq X \times \mathbb{C}$ 
\item The associated geodesic ray is induced by a holomorphic vector field on $X$ \emph{(Definition \ref{Definition induced vector field geodesic})}. 
\end{itemize}
\end{thm}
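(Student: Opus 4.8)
\noindent The plan is to treat the two implications separately, the first being elementary and the second containing all the content.

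For ``$\mathcal{X}_{\pi^{-1}(\mathbb{C})} \simeq X\times\mathbb{C}$ $\Rightarrow$ the associated geodesic ray is induced by a holomorphic vector field'', I would argue as follows. Under such an isomorphism of test configurations the $\mathbb{C}^{*}$-action on $\mathcal{X}$ takes the form $\lambda\cdot(x,z)=(\sigma_{\lambda}(x),\lambda z)$ for a one-parameter subgroup $\sigma\colon\mathbb{C}^{*}\to\mathrm{Aut}(X)$ (the standard normalization of a $\mathbb{C}^{*}$-action on a product over the standard action on the base), with infinitesimal generator a holomorphic vector field $V$ on $X$. Pulling back a relatively K\"ahler representative of $\mathcal{A}$ by the action and passing to the associated geodesic ray, exactly as in the construction in \cite{SD1, SD2, BDL}, returns the ray $t\mapsto\phi_{t}$ with $\omega_{\phi_{t}}=\sigma_{e^{-t}}^{*}\omega$; by Definition \ref{Definition induced vector field geodesic} this is a geodesic ray induced by $V$. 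This step is essentially a recollection of computations already present in the cited references.

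For the converse, fix once and for all a K\"ahler class $\alpha_{0}$ in the (nonempty) K-polystable locus, and suppose the associated geodesic ray $\{\phi_{t}\}$ of $(\mathcal{X},\mathcal{A})$ is induced by a holomorphic vector field $V$. First I would choose a relatively K\"ahler class $\mathcal{A}_{0}$ on the total space $\mathcal{X}$ with $\mathcal{A}_{0}|_{\mathcal{X}_{1}}=\alpha_{0}$: since $\mathcal{X}$ already carries a relatively K\"ahler class and (modifying if necessary) dominates $X\times\mathbb{P}^{1}$, one may pull back a small multiple of any K\"ahler form on $X$ and correct by exceptional classes supported over the central fibre, so that the restriction map from relatively K\"ahler classes on $\mathcal{X}\to\mathbb{C}$ to the K\"ahler cone of $X$ is surjective. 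The crucial point is then to show that the associated geodesic ray of the repolarized test configuration $(\mathcal{X},\mathcal{A}_{0})$ for $(X,\alpha_{0})$ is again induced by the same $V$; here I expect to rely on a description of the associated geodesic ray in terms of the $\mathbb{C}^{*}$-action on $\mathcal{X}$ near the central fibre rather than in terms of the polarization, following the analysis of \cite{BDL, SD2}, the vector field $V$ itself being independent of the choice of $\mathcal{A}$. Granting this, the radial Mabuchi functional along the associated geodesic ray of $(\mathcal{X},\mathcal{A}_{0})$ is affine in $t$ with slope $\mathrm{Re}\,F_{\alpha_{0}}(V)$, the (real part of the) Futaki invariant of $V$ on $(X,\alpha_{0})$, because the ray is a flow of biholomorphisms; invoking the identification of the non-Archimedean Mabuchi invariant with this radial slope for associated geodesic rays (\cite{SD1}, following \cite{BHJ1}) gives $M^{\mathrm{NA}}(\mathcal{X},\mathcal{A}_{0})=\mathrm{Re}\,F_{\alpha_{0}}(V)$.

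To finish I would note that $V$ also generates product test configurations $\mathcal{X}_{\pm V}\simeq X\times\mathbb{C}$ for $(X,\alpha_{0})$, whose non-Archimedean Mabuchi invariants are the classical Futaki invariants $\pm\,\mathrm{Re}\,F_{\alpha_{0}}(V)$. Since $\alpha_{0}$ is K-polystable, hence K-semistable, both are nonnegative, so $\mathrm{Re}\,F_{\alpha_{0}}(V)=0$ and therefore $M^{\mathrm{NA}}(\mathcal{X},\mathcal{A}_{0})=0$. Applying K-polystability of $\alpha_{0}$ to the test configuration $(\mathcal{X},\mathcal{A}_{0})$ then forces it to be a product, i.e.\ $\mathcal{X}_{\pi^{-1}(\mathbb{C})}\simeq X\times\mathbb{C}$, which is the desired conclusion and is independent of the polarization. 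The main obstacle I anticipate is the transfer step in the previous paragraph: making rigorous that ``the associated geodesic ray is induced by a holomorphic vector field'' is preserved under replacing $\mathcal{A}$ by $\mathcal{A}_{0}$ on the same total space, extending in this way the results of \cite{BDL}. A secondary technical point, relevant in the possibly irrational setting, is that one genuinely needs the equality (not merely the inequality $M^{\mathrm{NA}}\geq$ radial slope) between $M^{\mathrm{NA}}(\mathcal{X},\mathcal{A}_{0})$ and the radial Mabuchi slope along the associated geodesic ray, and hence may additionally have to argue that a test configuration whose associated geodesic ray is a holomorphic flow has sufficiently mild central fibre for this equality to hold.
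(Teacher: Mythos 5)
Your overall strategy is the same as the paper's (repolarize the same total space $\mathcal{X}$ to a K-polystable class, keep the ray induced by $V$, identify the invariant with the Futaki invariant, and conclude via K-polystability), and the easy direction is fine. But the step you yourself flag as ``the main obstacle'' -- the transfer step -- is precisely where all the content lies, and your proposed justification for it would not work as stated. The geodesic ray associated to $(\mathcal{X},\mathcal{A}_0)$ is \emph{not} determined by the $\mathbb{C}^*$-action on $\mathcal{X}$ alone, independently of the class: on a smooth dominating model one has $\rho^*\mathcal{A}=\mu^*p_1^*\alpha+[D]$, and the compatible rays are pinned down by the singularity type of the Green function $\psi_D$ of $D$; a different relatively K\"ahler class on the same total space can produce a different $D$ and hence a ray of a different singularity type. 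The paper's resolution (Lemma \ref{Lemma main}) is not an independence statement but a \emph{construction}: choose $\lambda>0$ with $\lambda\alpha_0-[\omega]$ K\"ahler and set $\mathcal{A}_0$ so that $\rho^*\mathcal{A}_0=\mu^*p_1^*(\lambda\alpha_0)+[D]$ with the \emph{same} divisor $D$, i.e.\ add the nef pullback $\mu^*p_1^*(\lambda\alpha_0-[\omega])$ to $\mathcal{A}$; then relative K\"ahlerness is preserved and the very same ray, induced by $V$, remains $C^{\infty}$-compatible. (This also shows your surjectivity claim for the restriction map is only needed, and only cleanly available, up to scaling $\alpha_0\mapsto\lambda\alpha_0$, which is harmless since K-polystability is scale invariant.)

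Two further steps of your argument have genuine gaps. First, you deduce $\mathrm{Re}\,F_{\alpha_0}(V)=0$ by applying semistability to product configurations $\mathcal{X}_{\pm V}$; but the Hamiltonian vector field $V$ arising from a geodesic ray need not generate a $\mathbb{C}^*$-action at all (the paper flags exactly this subtlety), so those test configurations may not exist. The paper instead uses that the K-polystable locus is contained in the Futaki-vanishing locus $\mathcal{C}_F$, so $\mathrm{Fut}_{\lambda\alpha_0}(X,\cdot)\equiv 0$ on the full Lie algebra (one can recover this from semistability by approximating $V$ inside the closure of its torus by generators of circle actions and using linearity/continuity of the Futaki character, but some such argument must be supplied). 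Second, K-polystability constrains $\mathrm{DF}$, not $\mathrm{M}^{\mathrm{NA}}$: from $\mathrm{M}^{\mathrm{NA}}(\mathcal{X},\mathcal{A}_0)=0$ alone you only get $\mathrm{DF}(\mathcal{X},\mathcal{A}_0)\geq 0$, with strict inequality possible when $\mathcal{X}_0$ is non-reduced, and then no contradiction with polystability arises. You notice this (``sufficiently mild central fibre'') but leave it unresolved; the paper closes it by invoking \cite[Theorem 3.10]{SD2}, which for rays induced by holomorphic vector fields identifies $\mathrm{DF}(\mathcal{X},\mathcal{A}_0)$ itself with $\mathrm{Fut}_{\lambda\alpha_0}(X,V)$, so that $\mathrm{DF}(\mathcal{X},\mathcal{A}_0)=0$ and K-polystability of $(X,\lambda\alpha_0)$ forces $\mathcal{X}_{\pi^{-1}(\mathbb{C})}\simeq X\times\mathbb{C}$.
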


\noindent Main theorem \ref{Theorem dichotomy main} and Theorem \ref{Theorem dichotomy} together strengthen the expectation that the notions of K-polystability and geodesic K-polystability are equivalent in general.
Combined with \cite[Theorem 1.1]{SD2}, Theorem \ref{Theorem dichotomy} moreover reduces the statement ``cscK manifolds are K-polystable'', which is an important problem still open for arbitrary K\"ahler manifolds, to understanding whether the K-polystable locus is non-empty. 

As a natural family of examples we may consider compact K\"ahler manifolds that we shall refer to as \emph{weakly cscK}, i.e. such that the cscK locus $\neq \emptyset$ in the K\"ahler cone $\mathcal{C}_X$ of $X$. Indeed, polarized weakly cscK manifolds $(X,L)$ satisfy the hypothesis that the K-polystable locus $\neq \emptyset$, since cscK then implies K-polystability (see \cite{Berman, BDL}).  
Moreover, there are many interesting concrete examples of weakly cscK manifolds; in particular any K\"ahler-Einstein Fano manifold is weakly cscK. 
We have the following immediate corollary of Theorem \ref{Theorem dichotomy main} for weakly cscK polarized manifolds:

\begin{thm}
Let $(X,L)$ be a polarized weakly cscK manifold. Then 
\begin{enumerate}
    \item $(X,L)$ is  K-polystable if and only if it is  geodesically K-polystable.
    \item $(X,L)$ is equivariantly geodesically K-polystable if and only if it is equivariantly K-polystable.
\end{enumerate}
\end{thm}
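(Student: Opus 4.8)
The plan is to deduce both statements directly from Main Theorem \ref{Theorem dichotomy main} and Theorem \ref{Theorem dichotomy}; the only genuine point to check is that a polarized weakly cscK manifold satisfies the standing hypothesis that the K-polystable locus is non-empty, after which the rest is formal bookkeeping. So the first step is this verification. Since $(X,L)$ is polarized and weakly cscK, by definition the cscK locus is non-empty in $\mathcal{C}_X$. By \cite{Berman, BDL} the cscK locus is contained in the K-polystable locus (cscK implies K-polystability); alternatively, the cscK condition is open in $\mathcal{C}_X$ by the standard deformation argument of LeBrun--Simanca, so the cscK locus contains a rational class, i.e.\ a cscK polarization, which is then K-polystable by \cite{BDL}. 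Either way the K-polystable locus is non-empty.

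For part (1), I would then simply apply Main Theorem \ref{Theorem dichotomy main} with $[\omega] = c_1(L)$, noting that for a polarized class the notions of K-polystability and geodesic K-polystability appearing there reduce to the usual ones (cf.\ \cite{SD1, DervanRoss, SD2}): this gives that $(X,L)$ is K-polystable if and only if it is geodesically K-polystable. For part (2), I would fix a maximal torus $T \subseteq \mathrm{Aut}_0(X)$ and recall that the equivariant versions of the two notions are obtained by testing only $T$-equivariant test configurations, requiring $\mathrm{DF} \geq 0$ with equality precisely for $T$-equivariant products (respectively, for $T$-equivariant test configurations whose associated geodesic ray is induced by a holomorphic vector field). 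One implication is immediate: a product configuration $X \times \mathbb{C}$ has associated geodesic ray generated by the holomorphic vector field of the induced $\mathbb{C}^*$-action (possibly trivial), in the sense of Definition \ref{Definition induced vector field geodesic}, so equivariant K-polystability implies equivariant geodesic K-polystability. Conversely, since the K-polystable locus is non-empty, Theorem \ref{Theorem dichotomy} applies to \emph{every} test configuration, in particular to every $T$-equivariant one, and shows that a test configuration whose associated geodesic ray is induced by a holomorphic vector field satisfies $\mathcal{X}_{\pi^{-1}(\mathbb{C})} \simeq X \times \mathbb{C}$; one checks the isomorphism may be taken $T$-equivariantly, so it is a $T$-equivariant product. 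Hence equivariant geodesic K-polystability implies equivariant K-polystability, and quantifying over all maximal tori $T$ yields (2).

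The main obstacle here is not in this corollary at all but in its inputs: everything rests on Theorem \ref{Theorem dichotomy}, i.e.\ on the hard implication ``associated geodesic ray induced by a holomorphic vector field $\Rightarrow$ product configuration'' under the assumption that the K-polystable locus is non-empty. Granting that, the only step requiring some (routine) care is the equivariant refinement in part (2) --- namely that the product structure produced by Theorem \ref{Theorem dichotomy} can be arranged compatibly with the torus $T$, and that this matches the notion of ``$T$-equivariant product'' used in the definition of equivariant (geodesic) K-polystability in \cite{Dervanrelative, SD2}.
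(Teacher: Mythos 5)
Your proof is correct and follows essentially the same route as the paper: verify that the K-polystable locus is non-empty because the cscK class given by weak cscK-ness is K-polystable by \cite{Berman, BDL}, then invoke Main Theorem \ref{Theorem dichotomy main} for part (1), and obtain part (2) by running the same argument restricted to equivariant test configurations (the paper phrases this as the equivariantly K-polystable locus being non-empty since K-polystability trivially implies equivariant K-polystability). One minor caveat: your ``alternative'' justification via LeBrun--Simanca is shakier than you suggest (openness of the cscK locus holds only relative to the Futaki vanishing locus, so producing a rational cscK class is not automatic), but it is not needed since the primary justification is the one the paper uses.
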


\noindent The stability notions referred to in $(1)$ are the classical (algebraic) K-polystability and (algebraic) geodesic K-polystability for polarized manifolds, see Section \ref{Section definitions Kps} for precise definitions. In $(2)$ we say that $(X,[\omega])$ is \emph{equivariantly geodesically K-polystable} if and only if it is geodesically K-polystable with respect to equivariant test configuratons (see \cite{Dervanrelative, SD2} for the K\"ahler case). Hence, this extends result of \cite{BDL} from the case of polarized cscK manifolds to weakly cscK polarized manifolds. 
Moreover, the result $(2)$ holds also for arbitrary compact K\"ahler manifolds, using the formalism for \cite{SD1, DervanRoss}. The result $(1)$ can be checked for arbitrary compact K\"ahler manifolds provided that the automorphism group is discrete, but remains an open question in general.

We also record the following comparison of stability notions, which holds even for non-polarized K\"ahler manifolds $(X,\omega)$ (for the compatibility notion see Section \ref{Section preliminary rays} and references therein): 
\begin{rem}
Suppose that $(X,\omega)$ is a weakly cscK K\"ahler manifold with $\mathrm{Aut}_0(X)$ discrete. Then $(X,[\omega])$ is uniformly K-stable if and only if $(X,[\omega])$ is coercive with respect to the set of subgeodesic rays compatible with a relatively K\"ahler test configuration for $(X,[\omega])$. Likewise, $(X,[\omega])$ is K-stable if and only if $(X,[\omega])$ is geodesically stable with respect to the set of subgeodesic rays compatible with a relatively K\"ahler test configuration for $(X,[\omega])$.
\end{rem}


\subsection{A remark on triviality of test configurations}

Another corollary of the techniques of this paper concerns the equivalence of various notions of product configurations occurring in the literature. This is interesting in its own right, since it addresses the question of equivalence of several commonly seen (and a priori different) candidate notions of K-polystability. Indeed, these notions have in commmon that they ask that the so called Donaldson-Futaki invariant $\mathrm{DF}(\mathcal{X},\mathcal{A})$ is non-negative for all test configurations $(\mathcal{X},\mathcal{A})$ for $(X,\alpha)$, with equality if and only if $(\mathcal{X},\mathcal{A})$ is a ``product'', in a suitable sense. Addressing a question asked in the author's thesis \cite{Zakthesis}, the following result proves that several commonly seen notions of product configuration are in fact equivalent: 

\begin{thm} 
Suppose that $(X,L)$ is a
polarized weakly cscK manifold. Let $(\mathcal{X}, \mathcal{L})$ be a relatively K\"ahler test configuration for $(X,L)$, with associated geodesic ray $(\varphi_t)_{t \geq 0}$. Then the following are equivalent:
\begin{enumerate}
    \item $\mathcal{X}_{\pi^{-1}(\mathbb{C})} \simeq X \times \mathbb{C}$
    \item $\mathcal{X}_{\pi^{-1}(\Delta_r)} \simeq X \times \Delta_r$ for each $r > 0$, where $\Delta_r := \{ z \in \mathbb{C} \; \vert \; |z| < r \}$. 
    \item $\mathcal{X}_0 \simeq X$
    \item The associated geodesic ray $(\varphi_t)_{t \geq 0}$ is induced by a holomorphic vector field $V$ on $X$.
\end{enumerate}
\end{thm}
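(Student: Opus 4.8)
The plan is to deduce $(1)\Leftrightarrow(4)$ directly from Theorem~\ref{Theorem dichotomy}, to observe that $(1)\Rightarrow(2)\Rightarrow(3)$ is formal, and to reserve the real work for $(3)\Rightarrow(1)$ (equivalently $(3)\Rightarrow(4)$). First: since $(X,L)$ is polarized and weakly cscK, the cscK locus in $\mathcal{C}_X$ is non-empty, and for polarized manifolds cscK implies K-polystability \cite{Berman,BDL}; hence the K-polystable locus of $(X,L)$ is non-empty and we are in the situation of Theorem~\ref{Theorem dichotomy} and Main Theorem~\ref{Theorem dichotomy main}. A relatively K\"ahler test configuration $(\mathcal{X},\mathcal{L})$ for $(X,L)$ is in particular a cohomological K\"ahler test configuration for $(X,c_1(L))$, and its associated geodesic ray coincides with the ray attached to it in that sense; so Theorem~\ref{Theorem dichotomy} applied to $(\mathcal{X},\mathcal{L})$ gives at once that $(1)$ holds if and only if $(4)$ holds.

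For the easy inclusions: an isomorphism $\mathcal{X}_{\pi^{-1}(\mathbb{C})}\simeq X\times\mathbb{C}$ over $\mathbb{C}$ restricts over each disc $\Delta_r\subset\mathbb{C}$ to an isomorphism $\mathcal{X}_{\pi^{-1}(\Delta_r)}\simeq X\times\Delta_r$, giving $(1)\Rightarrow(2)$; restricting further to the fibre over $0$ gives $\mathcal{X}_0\simeq X\times\{0\}\simeq X$, i.e.\ $(2)\Rightarrow(3)$. The substance is therefore $(3)\Rightarrow(1)$: a test configuration for $(X,L)$ whose central fibre is isomorphic to $X$ must be a product. I would argue as follows. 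Every fibre of $\pi$ over $\mathbb{C}^{\ast}$ is smooth and, by hypothesis $(3)$, so is $\mathcal{X}_0$, so flatness of $\pi$ over the regular base $\mathbb{C}$ forces $\mathcal{X}$ to be smooth and $\pi\colon\mathcal{X}\to\mathbb{C}$ to be a proper holomorphic submersion, equivariant for the $\mathbb{C}^{\ast}$-action. Restricting that action to the invariant central fibre $\mathcal{X}_0\simeq X$ produces an algebraic one-parameter subgroup $\rho\colon\mathbb{C}^{\ast}\to\mathrm{Aut}_0(X)$, generated by a holomorphic vector field $V$ on $X$. The canonical $\mathbb{C}^{\ast}$-equivariant identification $\mathcal{X}_{\pi^{-1}(\mathbb{C}^{\ast})}\simeq X\times\mathbb{C}^{\ast}$ (standard action on the right) can then be twisted by $\rho$ into a new trivialization $\Psi(x,z)=z\cdot_{\mathcal{X}}\!\big(\rho(z)^{-1}x\big)$, which intertwines the $\mathcal{X}$-action with the twisted action $\tau\cdot(x,z)=(\rho(\tau)x,\tau z)$ on $X\times\mathbb{C}^{\ast}$. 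The plan is to show that $\Psi$ extends across the origin to a $\mathbb{C}^{\ast}$-equivariant biholomorphism $\mathcal{X}_{\pi^{-1}(\mathbb{C})}\simeq X\times\mathbb{C}$; the underlying variety of the target is $X\times\mathbb{C}$ over $\mathbb{C}$, which is $(1)$, and inspecting $\Psi$ one reads off that the associated geodesic ray is the one induced by $V$ in the sense of Definition~\ref{Definition induced vector field geodesic}, which is $(4)$.

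The hard part will be this last extension step. As $z\to 0$ the factor $\rho(z)^{-1}x$ diverges while $z\cdot_{\mathcal{X}}(\cdot)$ contracts towards $\mathcal{X}_0$, and the point is that these two effects cancel to leave a finite, nondegenerate limit; making this rigorous requires a local model of $\mathcal{X}$ near $\mathcal{X}_0$ adapted to the $\mathbb{C}^{\ast}$-action — an equivariant tubular neighbourhood, or a Bialynicki--Birula type decomposition around the fixed locus $\mathrm{Fix}(\rho)\subset\mathcal{X}_0$ — matched with the corresponding model of the twisted product. An equivalent route is to analyse the birational map $\mathcal{X}\dashrightarrow X\times\mathbb{C}$ furnished by $\Psi$: both sides are normal and proper over $\mathbb{C}$, the map is an isomorphism over $\mathbb{C}^{\ast}$, and the only prime divisors over $0$ are the irreducible reduced central fibres on either side; one checks that neither is extracted nor contracted — it is here that the smoothness of $\mathcal{X}$ together with the $\mathbb{C}^{\ast}$-action play their role, since for general normal test configurations the central fibre might be abstractly isomorphic to $X$ while still defining a non-trivial divisorial valuation on $\mathbb{C}(X)(t)$ — and then Zariski's main theorem finishes the argument. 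Related assertions for cscK polarizations appear in \cite{BDL}, and the content of the present proof is that Theorem~\ref{Theorem dichotomy} allows one to run this circle of ideas in the weakly cscK setting.
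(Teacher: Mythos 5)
Your handling of $(1)\Leftrightarrow(4)$ (via Theorem \ref{Theorem dichotomy}, after noting that weak cscK plus Berman/BDL gives a non-empty K-polystable locus) and the formal chain $(1)\Rightarrow(2)\Rightarrow(3)$ match the paper. The genuine gap is $(3)\Rightarrow(1)$: this is precisely the step you defer ("the hard part will be this last extension step"), and neither the equivariant tubular neighbourhood / Bialynicki--Birula sketch nor the "neither extracted nor contracted, then Zariski's main theorem" outline is actually carried out, so the central implication of the theorem is a plan rather than a proof. Note also that your proposed route never uses the weakly cscK hypothesis at this point: if it worked as stated it would prove that \emph{any} relatively K\"ahler test configuration over \emph{any} polarized manifold with central fibre isomorphic to $X$ is a product over $\mathbb{C}$, a considerably stronger claim than the theorem, and showing that the limit of your twisted trivialization $\Psi$ exists and is biholomorphic onto $\mathcal{X}_0$ (in the possibly transcendental, merely relatively K\"ahler setting, with the isomorphism $\mathcal{X}_0\simeq X$ not a priori compatible with any polarization or with the ray) is exactly the content that would have to be supplied.

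The paper closes this implication by stability rather than by local geometry near $\mathcal{X}_0$. Given $(\mathcal{X},\mathcal{A})$ for $(X,\alpha)$ with $\mathcal{X}_0\simeq X$, Lemma \ref{Lemma main} (after scaling $\beta$) produces $\mathcal{A}_\beta=\mathcal{A}+\mu^*p_1^*(\beta-\alpha)$, a relatively K\"ahler test configuration for $(X,\beta)$ with the \emph{same} total space and the same compatible ray, where $\beta$ is chosen in the non-empty K-polystable (cscK) locus. Weak K-polystability of $(X,\beta)$ then forces $\mathrm{DF}(\mathcal{X},\mathcal{A}_\beta)=0$ because $\mathcal{X}_0\simeq X$, and strong K-polystability of $(X,\beta)$ upgrades this to $\mathcal{X}_{\pi^{-1}(\mathbb{C})}\simeq X\times\mathbb{C}$; this is the argument of Theorem \ref{Theorem equivalence Kps notions} together with Proposition \ref{Prop strong implies weak}, and combined with Theorem \ref{Theorem dichotomy} it also yields $(4)$ and the $r$-versions in $(2)$. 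So the weakly cscK hypothesis is not decorative: it is what allows one to trade the delicate extension-across-the-central-fibre analysis for a Donaldson--Futaki computation on a fixed total space. To repair your proposal, either complete the extension argument rigorously (and then explain why no stability hypothesis is needed, which would itself be a noteworthy strengthening), or replace your $(3)\Rightarrow(1)$ by the change-of-class argument just described.
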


\noindent As before, the point is that this holds even if the underlying K\"ahler class $c_1(L)$ does not admit any cscK metrics, as long as $(X,L)$ is weakly cscK. In fact, note that if we want the K-polystable locus to contain the properness locus, then there is no choice but to define products as objects whose associated subgeodesic rays satisfy $$\inf_{g \in G} \mathrm{J}(g.\varphi_t) = 0,$$ where $G := \mathrm{Aut}_0(X)$ is the connected component of the automorphism group of $X$, the action $g.\varphi$ on potentials is defined as in Section \ref{Section geodesics and action}, and
$$
\mathrm{J}(\varphi) := \int_X \varphi \omega^n - \frac{1}{n+1} \sum_{k = 0}^{n} \int_X \varphi \omega^k \wedge \omega_{\varphi}^{n-k}.
$$
\noindent This is why geodesic K-polystability is such a natural notion, because it is ``the most obvious'' notion satisfying the above requirement. 

The techniques used to prove the above results rely on understanding paths of test configurations when changing the underlying class, and in particular the existence of such paths that preserve the associated geodesic ray. 


\subsection{Idea of the proofs: Special paths of test configurations and an injectivity lemma}

Let $\alpha, \beta \in H^{1,1}(X,\mathbb{R})$ be two K\"ahler classes on $X$. In order to study how K-polystability notions vary as we vary the underlying class, one has to understand how to relate test configurations for $(X,\alpha)$ to test configurations of $(X,\beta)$. 
A first straightforward observation is the following (part $(1)$ on convex combinations of test configurations should be compared to e.g. \cite{Antoniso} in the setting of polarized manifolds, part $(2)$ is a direct consequence of the intersection theoretic point of view due to \cite{Wang, Odaka, Bermantransc, SD1, DervanRoss} and part $(3)$ is a direct consequence of \cite{SD2}): 

\begin{thm} \label{Theorem convex combination}
Let $\alpha, \beta \in \mathcal{C}_X$ and set $\alpha_s := (1-s)\alpha + s\beta$, for $s \in [0,1]$. Suppose that $(\mathcal{X},\mathcal{A})$ and $(\mathcal{X},\mathcal{B})$ is a relatively K\"ahler test configuration for $(X,\alpha)$ and $(X,\beta)$ respectively. Then
\begin{enumerate}
    \item $(\mathcal{X}, (1-s)\mathcal{A} + s \mathcal{B})$ is a relatively K\"ahler test configuration for $(X,\alpha_s)$.
     \item The maps $[0,1] \ni s \mapsto \mathrm{DF}(\mathcal{X}, (1-s)\mathcal{A} + s \mathcal{B})$ and $[0,1] \ni s \mapsto \mathrm{J}^{\mathrm{NA}}(\mathcal{X}, (1-s)\mathcal{A} + s \mathcal{B})$ are continuous.
    \item Suppose that $\rho_A(t)$ and $\rho_B(t)$ are the uniquely associated geodesic rays respectively, and write $\rho_s(t) := (1-s)\rho_A(t) + s \rho_B(t)$ If $\alpha_s = [\omega_s]$, then 
    $$
    \mathrm{DF}(\mathcal{X}, (1-s)\mathcal{A} + s \mathcal{B}) = \lim_{t \rightarrow +\infty} t^{-1} \mathrm{M}_{\omega_t}(\rho_s(t)) - ((\mathcal{X}_{0,red} - \mathcal{X}_0) \cdot \mathcal{A}^n). 
    $$
\end{enumerate}
\end{thm}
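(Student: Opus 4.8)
The plan is to treat the three parts in turn, since they rely on quite different inputs. For part $(1)$, I would recall the definition of a relatively Kähler test configuration: $(\mathcal{X},\mathcal{A})$ consists of a normal Kähler space $\mathcal{X}$ with a $\mathbb{C}^*$-action, a flat proper map $\pi \colon \mathcal{X} \to \mathbb{C}$ equivariant for the standard action on $\mathbb{C}$, an isomorphism $\mathcal{X} \setminus \mathcal{X}_0 \simeq X \times \mathbb{C}^*$, and a $(1,1)$-class $\mathcal{A}$ that is relatively Kähler, i.e. admits a representative restricting to a Kähler form on each fiber (equivalently, $\mathcal{A} + \pi^*\omega_{\mathbb{FS}}$ is Kähler on a neighborhood of each compact set after suitable normalization). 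The key point is that relative Kählerness is an open and convex condition on the space of relative $(1,1)$-classes: if $\theta_{\mathcal{A}}$ and $\theta_{\mathcal{B}}$ are relatively Kähler representatives of $\mathcal{A}$ and $\mathcal{B}$, then $(1-s)\theta_{\mathcal{A}} + s\theta_{\mathcal{B}}$ restricts to a convex combination of Kähler forms on each fiber, hence is again relatively Kähler; and the fiber over $1 \in \mathbb{C}$ is $(X, (1-s)\alpha + s\beta) = (X,\alpha_s)$ by construction. Since $\mathcal{X}$ and its $\mathbb{C}^*$-action are unchanged, $(\mathcal{X}, (1-s)\mathcal{A} + s\mathcal{B})$ is a relatively Kähler test configuration for $(X,\alpha_s)$.

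For part $(2)$, I would use the intersection-theoretic formulas for the non-Archimedean functionals from \cite{SD1, DervanRoss} (following \cite{Wang, Odaka, Bermantransc}): on a resolution $\bar{\mathcal{X}} \to \mathbb{P}^1$ of the natural compactification, $\mathrm{DF}(\mathcal{X},\mathcal{A})$ and $\mathrm{J}^{\mathrm{NA}}(\mathcal{X},\mathcal{A})$ are expressed as finite $\mathbb{Q}$-linear (really $\mathbb{R}$-linear) combinations of intersection numbers of the form $(\bar{\mathcal{A}}^{j} \cdot \bar{\mathcal{X}}_0^{k} \cdot \pi^*c_1(K_{\mathbb{P}^1})^{\ell} \cdot \ldots)$ and analogous terms involving the relative canonical class. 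Each such intersection number is a polynomial in the coefficients of the class $\bar{\mathcal{A}}$ with respect to a fixed basis of $H^{1,1}(\bar{\mathcal{X}},\mathbb{R})$; substituting $\bar{\mathcal{A}} \rightsquigarrow (1-s)\bar{\mathcal{A}} + s\bar{\mathcal{B}}$ makes every intersection number a polynomial in $s$, and a finite sum of such is again polynomial, hence continuous (indeed smooth) in $s \in [0,1]$. I would remark that one should take care that the chosen resolution can be used uniformly for all $s$, which is immediate since the underlying space $\mathcal{X}$ (and its compactification) does not depend on $s$.

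For part $(3)$, assuming $\alpha_s = [\omega_s]$ for a Kähler form $\omega_s$, I would invoke \cite{SD2}: to the relatively Kähler test configuration $(\mathcal{X}, (1-s)\mathcal{A}+s\mathcal{B})$ there is a uniquely associated (weak) geodesic ray emanating from a fixed reference potential, and by the construction in that paper (descending the ray from the total space, and linearity of the fiberwise potentials in the relative class) this ray is precisely $\rho_s(t) = (1-s)\rho_A(t) + s\rho_B(t)$. The displayed identity is then the slope formula of \cite{SD2} (of Boucksom–Hisamoto–Jonsson / \cite{BDL} type) identifying $\mathrm{DF}$ with the asymptotic slope of the Mabuchi functional along the associated geodesic ray, corrected by the non-reducedness term $((\mathcal{X}_{0,\mathrm{red}} - \mathcal{X}_0) \cdot \mathcal{A}^n)$; one simply applies this formula to the test configuration $(\mathcal{X}, (1-s)\mathcal{A}+s\mathcal{B})$. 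Note the correction term is written with $\mathcal{A}^n$ because $\mathcal{X}_{0,\mathrm{red}}-\mathcal{X}_0$ is supported on the central fiber, so only the top self-intersection of the relative class on that fiber contributes and the cross terms with $\mathcal{B}$ vanish — this should be checked but is routine.

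I expect the only genuine subtlety to be in part $(3)$: one must make sure that the recipe of \cite{SD2} assigning a geodesic ray to a test configuration is actually \emph{affine} in the relative polarization, i.e. that $\rho_{(1-s)\mathcal{A}+s\mathcal{B}} = (1-s)\rho_{\mathcal{A}} + s\rho_{\mathcal{B}}$ and not merely that both sides are geodesic rays with the same endpoint. This follows from the explicit construction (the ray is built from fiberwise Monge–Ampère / envelope data that depends linearly on the relative class, and uniqueness of the associated ray then forces the identification), but it is the step that requires actually revisiting the definitions in \cite{SD2} rather than quoting a black box. Parts $(1)$ and $(2)$ are, by contrast, essentially formal.
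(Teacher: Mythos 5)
Parts $(1)$ and $(2)$ of your proposal are fine and essentially match the paper: convexity of relatively K\"ahler classes (plus linearity of the pullback condition away from the central fiber, which the paper phrases via the decomposition $\rho^*\mathcal{A}=\mu^*p_1^*\alpha+[D]$, $\rho^*\mathcal{B}=\mu^*p_1^*\beta+[E]$ on a resolution), and polynomiality in $s$ of the intersection-theoretic expressions for $\mathrm{DF}$ and $\mathrm{J}^{\mathrm{NA}}$.

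Part $(3)$, however, contains a genuine gap, and it is exactly at the step you flagged. You argue that the geodesic ray associated to $(\mathcal{X},(1-s)\mathcal{A}+s\mathcal{B})$ is $\rho_s=(1-s)\rho_A+s\rho_B$ because the construction of the associated ray is ``affine in the relative polarization.'' It is not: the associated ray is produced by solving a homogeneous complex Monge--Amp\`ere equation (an envelope/maximality construction) on a model of $\mathcal{X}$, and such solutions do not depend affinely on the class or the boundary data. Indeed, a convex combination of two geodesic rays is in general only a \emph{subgeodesic} ray (it is $\omega_s$-psh but not maximal), so the identification you need is false in general, and uniqueness of the associated ray cannot rescue it. The paper's proof avoids this issue entirely: it never claims $\rho_s$ is the associated geodesic of the combined configuration. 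Instead it observes that compatibility (in the sense of Section \ref{Section preliminary rays}) is a statement that $\mu^*\Phi+\psi_D$ extends across the central fiber, and this condition is manifestly linear: $\mu^*\bigl((1-s)\Phi_0+s\Phi_1\bigr)+(1-s)\psi_D+s\psi_E=(1-s)(\mu^*\Phi_0+\psi_D)+s(\mu^*\Phi_1+\psi_E)$ extends because each summand does. Hence $\rho_s$ is a subgeodesic ray compatible with $(\mathcal{X},(1-s)\mathcal{A}+s\mathcal{B})$, and the slope formula of \cite[Theorem 1.5]{SD2} (Theorem \ref{Theorem summary}$(1)$ above), which is valid along \emph{any} compatible subgeodesic ray and not only along the associated geodesic, gives the displayed identity. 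So the correct fix for your argument is to replace the affinity claim by this compatibility argument. A minor additional remark: your justification that ``the cross terms with $\mathcal{B}$ vanish'' in the correction term is not right as stated---intersecting $\mathcal{X}_{0,\mathrm{red}}-\mathcal{X}_0$ with $n$ copies of $(1-s)\mathcal{A}+s\mathcal{B}$ does produce cross terms in general; the formula one gets from the slope theorem carries $((\mathcal{X}_{0,\mathrm{red}}-\mathcal{X}_0)\cdot((1-s)\mathcal{A}+s\mathcal{B})^n)$, and this whole term simply vanishes when $\mathcal{X}_0$ is reduced.
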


\noindent In practice, it is however not a given to know something about the set of test configurations for a different polarization than the one considered. A key question then becomes: \emph{How can one relate the test configurations for $(X,\alpha)$ to the test configurations for $(X,\beta)$?} 
In this direction, we prove the following extended version of the injectivity lemma \cite[Theorem 1.8]{SD2} (now allowing for a change of the underlying class): 

\begin{thmmain}\label{Theorem extended injectivity lemma}
Let $\alpha := [\omega]$ and $\beta := [\theta]$ be K\"ahler classes on $X$. Suppose that there is a subgeodesic ray $\rho(t) \in \mathrm{PSH}(X,\omega) \cap \mathrm{PSH}(X,\theta)$ which is compatible with two relatively K\"ahler test configurations $(\mathcal{X},\mathcal{A})$ and $(\mathcal{Y}, \mathcal{B})$ for $(X,\alpha)$ and $(X,\beta)$ respectively. Then 
the canonical $\mathbb{C}^*$-equivariant isomorphism $\mathcal{X} \setminus \mathcal{X}_0 \rightarrow \mathcal{Y} \setminus \mathcal{Y}_0$  extends to an isomorphism $\mathcal{X} \rightarrow \mathcal{Y}$. 
\end{thmmain}

\noindent A slightly more precise result relating test configurations of $(X,\alpha)$ to those of $(X,\beta)$ is given below: 

\begin{thmmain} \label{Theorem second main}
Let $\alpha \in \mathcal{C}_X$ and suppose that $(\mathcal{X},\mathcal{A})$ is a relatively K\"ahler smooth and dominating test configuration for $(X,\alpha)$. Then, for each $\beta \in \mathcal{C}_X$ there is a $\lambda > 0$ such that $\lambda \beta > \alpha$ and a relatively K\"ahler test configuration $(\mathcal{Y}, \mathcal{B})$ for $(X,\lambda \beta)$ such that 
\begin{enumerate}
    \item $\mathcal{Y} = \mathcal{X}$,
    \item The test configurations $(\mathcal{X},\mathcal{A}) \sim (\mathcal{Y}, \mathcal{B})$, i.e. there is a geodesic ray $\rho(t)$ compatible with both. 
\end{enumerate}
In particular, if $\alpha = [\omega]$ and $\lambda \beta = [\theta]$, then we have 
    $$
    \mathrm{DF}(\mathcal{X}, \mathcal{A}) = \lim_{t \rightarrow +\infty} t^{-1} \mathrm{M}_{\omega}(\rho(t)) - ((\mathcal{X}_{0,red} - \mathcal{X}_0) \cdot \mathcal{A}^n)
    $$
    and
    $$
     \mathrm{DF}(\mathcal{X}, \mathcal{B}) = \lim_{t \rightarrow +\infty} t^{-1} \mathrm{M}_{\theta}(\rho(t)) - ((\mathcal{X}_{0,red} - \mathcal{X}_0) \cdot \mathcal{B}^n).
    $$
\end{thmmain}

\noindent This result has a number of straightforward applications, in particular to proving Theorem \ref{Theorem dichotomy}. 


\subsection{Topology of the K-semistable and uniformly K-stable loci}

As an application the techniques of this paper also yield some basic properties of the K-semistable and uniformly K-stable loci, where uniform K-stability is defined with respect to the norm $\mathrm{J}^{\mathrm{NA}}$, i.e. $(X,\alpha)$ is uniformly K-stable if there is a $\delta > 0$ such that $\mathrm{DF}(\mathcal{X},\mathcal{A}) \geq \delta \mathrm{J}^{\mathrm{NA}}(\mathcal{X},\mathcal{A})$ for all relatively K\"ahler test configurations $(\mathcal{X}, \mathcal{A})$ for $(X,\alpha)$ (see Section \ref{Section topology} for further details). In particular, the techniques of variation of the underlying class in the K\"ahler cone (Theorem \ref{Theorem convex combination}) immediately yield the following characterization of the K-semistable locus of the K\"ahler cone (cf. \cite[Theorem G]{Antoniso} for an analogous result in the projective setting). 

\begin{thm} \label{Theorem kss}
The K-semistable locus is closed in Euclidean topology in the K\"ahler cone of $X$.
\end{thm}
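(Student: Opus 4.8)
\emph{Proof strategy.} The plan is to prove the stronger statement that any $\alpha\in\mathcal C_X$ lying in the Euclidean closure of the K-semistable locus is itself K-semistable, by transporting a test configuration for $(X,\alpha)$ to nearby classes and invoking continuity of the Donaldson--Futaki invariant in the polarizing class. First I would fix a sequence $\beta_j\to\alpha$ in $\mathcal C_X$ with each $(X,\beta_j)$ K-semistable, and take an arbitrary relatively K\"ahler test configuration $(\mathcal X,\mathcal A)$ for $(X,\alpha)$; it is enough to show $\mathrm{DF}(\mathcal X,\mathcal A)\geq 0$. Recalling that to test K-semistability it suffices to consider smooth, dominating, relatively K\"ahler test configurations (see \cite{SD1,DervanRoss}), I may assume $\mathcal X$ is smooth and carries a $\mathbb C^*$-equivariant morphism $p\colon \mathcal X\to X\times\mathbb P^1$ restricting to an isomorphism over $\mathbb P^1\setminus\{0\}$. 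For $\gamma\in H^{1,1}(X,\mathbb R)$ I would then set $\mathcal A(\gamma):=\mathcal A+p^{*}\mathrm{pr}_X^{*}(\gamma-\alpha)$, a class on $\mathcal X$ whose restriction to every fibre $\mathcal X_t$, $t\neq 0$, equals $\gamma$ under the canonical identification $\mathcal X_t\simeq X$. Since the relatively K\"ahler cone of $\mathcal X$ is open and contains $\mathcal A=\mathcal A(\alpha)$, there is a Euclidean neighbourhood $U\subseteq\mathcal C_X$ of $\alpha$ such that $\mathcal A(\gamma)$ is relatively K\"ahler for every $\gamma\in U$; hence $(\mathcal X,\mathcal A(\gamma))$ is a relatively K\"ahler test configuration for $(X,\gamma)$ whenever $\gamma\in U$.

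Next I would use that $\gamma\mapsto\mathrm{DF}(\mathcal X,\mathcal A(\gamma))$ is continuous on $U$: by the intersection-theoretic description of the Donaldson--Futaki invariant (\cite{Wang,Odaka,SD1,DervanRoss}; cf. Theorem \ref{Theorem convex combination}(2)) it is a rational function of $\gamma$, because the relevant intersection numbers are polynomial in the class $\mathcal A(\gamma)$, which depends affinely on $\gamma$, and the only denominator occurring, $\mathrm{Vol}(\gamma)=(\gamma^{n})$, is nonzero on the K\"ahler cone. For $j$ large one has $\beta_j\in U$, so $(\mathcal X,\mathcal A(\beta_j))$ is a relatively K\"ahler test configuration for the K-semistable class $\beta_j$, whence $\mathrm{DF}(\mathcal X,\mathcal A(\beta_j))\geq 0$. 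Letting $j\to\infty$ and using the continuity just established gives $\mathrm{DF}(\mathcal X,\mathcal A)=\mathrm{DF}(\mathcal X,\mathcal A(\alpha))\geq 0$. Since $(\mathcal X,\mathcal A)$ was arbitrary, $(X,\alpha)$ is K-semistable, which shows that the K-semistable locus is Euclidean-closed in $\mathcal C_X$.

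The step I expect to require the most care is the reduction to a smooth test configuration $\mathcal X$ dominating $X\times\mathbb P^1$ together with the claim that the perturbed classes $\mathcal A(\gamma)$ stay in the relatively K\"ahler cone for $\gamma$ near $\alpha$. The dominating hypothesis is exactly what supplies, via $p^{*}\mathrm{pr}_X^{*}$, a class on the fixed total space $\mathcal X$ restricting to an arbitrary nearby $\gamma$ on the general fibre; if one prefers to start from a non-dominating model one must pass to a resolution and argue that the resulting loss of relative positivity can be corrected by a small perturbation, letting that perturbation tend to zero at the end. Everything else is formal: it only uses the affine dependence $\gamma\mapsto\mathcal A(\gamma)$ and the (rational, hence continuous) dependence of $\mathrm{DF}$ on the polarizing class, both of which are already the content of Theorem \ref{Theorem convex combination}.
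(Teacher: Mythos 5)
Your proposal is correct and follows essentially the same route as the paper's proof: reduce to smooth, dominating, relatively K\"ahler test configurations, transport the class via $\mathcal{A}_\beta = \mathcal{A} + \mu^*p_1^*(\beta-\alpha)$ (which stays relatively K\"ahler for $\beta$ near $\alpha$ by openness), and use continuity of the intersection-theoretic $\mathrm{DF}$ in the polarizing class. The only difference is presentational: you argue directly that a limit of K-semistable classes is K-semistable, while the paper argues the contrapositive, namely that the complement of the K-semistable locus is open.
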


\noindent Recall moreover that the cscK locus is open relative to the Futaki vanishing locus (see \cite{LeBrunSimanca}), i.e. the cscK locus can be written as $U \cap \mathcal{C}_F$, where $U$ is an open set in the K\"ahler cone $\mathcal{C}_X$). As a consequence of this, the K-semistable and cscK loci can only coincide whenever they both equal $\emptyset$ or $\mathcal{C}_X$. The fact that these stability loci are in general not equal has been known by means of counterexamples (see e.g. \cite{Tian} and \cite[Corollary 1.2]{Keller}), but this yields a complementary perspective on this question.
Since the K-polystable locus is moreover expected to be open relative to the Futaki vanishing locus, we expect in the same way that the set of strictly K-semistable K\"ahler classes form the complement of an open set inside a closed set in the K\"ahler cone. 
As before, it is known by example (see \cite{Tian, Keller}) that strictly K-semistable classes exist, but this would give some additional information (for example we would expect strictly K-semistable to exist in abundance, except exceptional cases.) 

In case the automorphism group is discrete, we may also give a similar characterization of the uniformly K-stable locus. To do this we associate to each K\"ahler class $\alpha \in \mathcal{C}_X$ its 'stability threshold', that is
$$
\Delta(\alpha) := \sup \{ \delta > 0 \; \vert \mathrm{DF}(\mathcal{X},\mathcal{A}) \geq \delta \mathrm{J}^{\mathrm{NA}}(\mathcal{X},\mathcal{A})\} > -\infty,
$$
where the supremum is taken over all relatively K\"ahler test configurations $(\mathcal{X},\mathcal{A})$ for $(X,\alpha)$. Moreover, introduce the level sets
$$
\mathcal{U}_{\delta} := \{ \alpha \in \mathcal{C}_X \; \vert \; \Delta(\alpha) \geq \delta \}.
$$
\noindent We then make the following observation: 

\begin{thm} \label{Theorem uks}
The uniformly K-stable locus can be written as a union
$$
\mathcal{U} := \bigcup_{\delta > 0} \mathcal{U}_{\delta},
$$
where each set $\mathcal{U}_{\delta}$ is closed in the Euclidean topology in the K\"ahler cone. 
\end{thm}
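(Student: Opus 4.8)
The plan is to dispose of the elementary part first and then prove closedness of each level set. For the union decomposition, unwind the definitions: $\alpha$ lies in the uniformly K-stable locus $\mathcal{U}$ exactly when there is some $\delta > 0$ with $\mathrm{DF}(\mathcal{X},\mathcal{A}) \geq \delta\,\mathrm{J}^{\mathrm{NA}}(\mathcal{X},\mathcal{A})$ for every relatively K\"ahler test configuration $(\mathcal{X},\mathcal{A})$ for $(X,\alpha)$. Since $\mathrm{J}^{\mathrm{NA}} \geq 0$, the set of admissible $\delta$ is (when nonempty) an interval $(0,\Delta(\alpha)]$, so $\alpha \in \mathcal{U}$ if and only if $\Delta(\alpha) \geq \delta$ for some $\delta > 0$; that is, $\mathcal{U} = \bigcup_{\delta>0}\mathcal{U}_\delta$. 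It then remains to show that for each fixed $\delta > 0$ the set $\mathcal{U}_\delta = \{\alpha \in \mathcal{C}_X : \Delta(\alpha) \geq \delta\}$ is closed, equivalently that $\alpha \mapsto \Delta(\alpha)$ is upper semicontinuous. So let $\alpha_j \to \alpha$ in $\mathcal{C}_X$ with $\Delta(\alpha_j) \geq \delta$ for all $j$; we must show $\mathrm{DF}(\mathcal{X},\mathcal{A}) \geq \delta\,\mathrm{J}^{\mathrm{NA}}(\mathcal{X},\mathcal{A})$ for every relatively K\"ahler test configuration $(\mathcal{X},\mathcal{A})$ for $(X,\alpha)$, and by the standard reduction to dominating models \cite{SD1, DervanRoss} it is enough to treat $(\mathcal{X},\mathcal{A})$ that is smooth and dominating.

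Fix such a test configuration. The key point is to produce, on the \emph{same} total space $\mathcal{X}$, relatively K\"ahler test configurations for the nearby classes $(X,\alpha_j)$ whose Donaldson--Futaki invariant and $\mathrm{J}^{\mathrm{NA}}$-norm converge to those of $(\mathcal{X},\mathcal{A})$. Since $(\mathcal{X},\mathcal{A})$ is dominating there is a $\mathbb{C}^*$-equivariant morphism $\Pi \colon \mathcal{X} \to X \times \mathbb{C}$ extending the canonical isomorphism over $\mathbb{C}^*$; set
\[
\mathcal{A}_j := \mathcal{A} + \Pi^{*}\mathrm{pr}_X^{*}(\alpha_j - \alpha),
\]
where $\mathrm{pr}_X \colon X \times \mathbb{C} \to X$ is the projection. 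The class $\Pi^{*}\mathrm{pr}_X^{*}(\alpha_j - \alpha)$ is $\mathbb{C}^*$-invariant and restricts to $\alpha_j - \alpha$ on the general fibre $\mathcal{X}_t \simeq X$, so $\mathcal{A}_j$ restricts to $\alpha_j$ there; and $\mathcal{A}_j \to \mathcal{A}$ in $H^{1,1}(\mathcal{X},\mathbb{R})$ because $\alpha_j \to \alpha$. As the relatively K\"ahler classes form an open cone containing $\mathcal{A}$, it follows that $(\mathcal{X},\mathcal{A}_j)$ is a relatively K\"ahler test configuration for $(X,\alpha_j)$ for all $j \gg 0$. (Conceptually, companion configurations of this type can also be obtained from Theorem~\ref{Theorem second main} together with Theorem~\ref{Theorem convex combination}, up to a harmless rescaling of the classes.)

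To conclude, recall that $\mathrm{DF}(\mathcal{X},\,\cdot\,)$ and $\mathrm{J}^{\mathrm{NA}}(\mathcal{X},\,\cdot\,)$ are given by intersection-theoretic expressions that are polynomial, hence continuous, in the polarizing class on the compactified total space (see \cite{SD1, DervanRoss}; cf. Theorem~\ref{Theorem convex combination}(2)). Therefore $\mathrm{DF}(\mathcal{X},\mathcal{A}_j) \to \mathrm{DF}(\mathcal{X},\mathcal{A})$ and $\mathrm{J}^{\mathrm{NA}}(\mathcal{X},\mathcal{A}_j) \to \mathrm{J}^{\mathrm{NA}}(\mathcal{X},\mathcal{A})$ as $j \to \infty$. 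Since $\Delta(\alpha_j) \geq \delta$ we have $\mathrm{DF}(\mathcal{X},\mathcal{A}_j) \geq \delta\,\mathrm{J}^{\mathrm{NA}}(\mathcal{X},\mathcal{A}_j)$ for each large $j$, and letting $j \to \infty$ gives $\mathrm{DF}(\mathcal{X},\mathcal{A}) \geq \delta\,\mathrm{J}^{\mathrm{NA}}(\mathcal{X},\mathcal{A})$. As $(\mathcal{X},\mathcal{A})$ ranges over all smooth dominating relatively K\"ahler test configurations for $(X,\alpha)$ --- which suffices --- this yields $\Delta(\alpha) \geq \delta$, i.e. $\alpha \in \mathcal{U}_\delta$, so $\mathcal{U}_\delta$ is closed.

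The main obstacle is the interplay between \emph{relative K\"ahlerness} and the reductions used: one must know that checking $\mathrm{DF} \geq \delta\,\mathrm{J}^{\mathrm{NA}}$ on smooth and dominating relatively K\"ahler test configurations is enough (so that the projection $\Pi$ exists and Theorem~\ref{Theorem second main} applies), and that the perturbed classes $\mathcal{A}_j$ remain relatively K\"ahler --- the former is the known reduction to dominating models \cite{SD1, DervanRoss}, the latter is openness of the relatively K\"ahler cone, but one should verify that passing to a resolution and perturbing does not disturb the values of $\mathrm{DF}$ and $\mathrm{J}^{\mathrm{NA}}$ beyond the controlled continuity used above. A minor point is the convention for $\Delta(\alpha)$ when $(X,\alpha)$ is not uniformly K-stable; this affects neither the level sets $\mathcal{U}_\delta$ with $\delta > 0$ nor the union identity.
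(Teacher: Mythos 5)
Your proposal is correct and follows essentially the same route as the paper: reduce to smooth dominating relatively K\"ahler test configurations, perturb the polarizing class on the fixed total space by $\mu^*p_1^*(\beta-\alpha)$ (your $\mathcal{A}_j = \mathcal{A} + \Pi^*\mathrm{pr}_X^*(\alpha_j-\alpha)$ is the paper's $\mathcal{A}_\beta = \mu^*p_1^*\beta + [D]$), keep relative K\"ahlerness by openness of that cone, and use polynomial dependence of $\mathrm{DF}$ and $\mathrm{J}^{\mathrm{NA}}$ on the class. The only difference is packaging: the paper shows the complement of $\mathcal{U}_\delta$ is open by perturbing a destabilizing configuration, while you verify sequential closedness by passing to the limit in the inequality, which is the same argument read in the contrapositive.
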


\noindent Note that the K-semistable locus equals $\mathcal{U}_0$, so Theorem \ref{Theorem kss} is a special case of Theorem \ref{Theorem uks}. 

\subsection{Organization of the paper}
The goal of Sections \ref{Section 2} - \ref{Section proof of main results} is to rigorously clarify how to view various K-stability notions as special cases of the classical geodesic stability notion (which is now known to be equivalent to existence of cscK metrics, due to recent progress of Chen-Cheng \cite{ChenChengII, ChenChengIII}). In order to do this, some standard preliminary notions are recalled in Section \ref{Section 2}. The slightly non-standard geodesic stability notion used in this paper is discussed in Section \ref{Section geodesic stability definition}. In Section \ref{Section 3} the definitions of a wide variety of (transcendental) K-polystability notions are given. In Section \ref{Section proof of main results} we compare these notions, and give proofs of other main results. Section \ref{Section 5} contains three applications of the methods used in our proof, in particular to weakly cscK manifolds and basic topological properties of the K-semistable and uniformly K-stable loci. 
The formalism for test configurations used in this paper is based on the notions introduced in \cite{SD1, DervanRoss, SD2, Dervanrelative}.


\bigskip

\section{Geodesic stability notions in K\"ahler geometry} \label{Section 2}

\noindent Throughout this paper, let $(X,\omega)$ be a compact K\"ahler manifold. Let $n := \mathrm{dim}_{\mathbb{C}}(X)$ be the complex dimension of $X$. Write $$V := \int_X \omega^n := ( \alpha^n)_X$$ for the K\"ahler volume of $X$. 

\subsection{Background on the space of K\"ahler metrics and geodesics} \label{Section geodesics and action}

Consider the space 
$$
\mathcal{H}_{\omega} := \{ \varphi \in C^{\infty}(X) \; \vert \; \omega_{\varphi} := \omega + dd^c\varphi > 0 \}, \; \; (dd^c := \frac{i}{2\pi}\partial \bar{\partial})
$$
of smooth K\"ahler potentials. In a landmark paper by Mabuchi \cite{Mabuchisymplectic} it was shown that $\mathcal{H}_{\omega}$ is a Riemannian symmetric space (of infinite dimension, with $T_{\varphi}\mathcal{H} \simeq C^{\infty}(X)$). 
Following Darvas \cite{Darvas14, Darvas15, Darvas, DR} and others we however privilege the point of view of considering $\mathcal{H}$ as a path metric space endowed with a certain \emph{Finsler metric} $d_1$. To introduce it, let $d_1: \mathcal{H}_{\omega} \times \mathcal{H}_{\omega} \rightarrow \mathbb{R}_+$ be the path length pseudometric associated to the weak Finsler metric on $\mathcal{H}_{\omega}$ defined by
$$
||\xi||_{\varphi} := V^{-1} \int_X |\xi|\omega_{\varphi}^n, \; \; \; \xi \in T_{\varphi}\mathcal{H}_{\omega} = \mathcal{C}^{\infty}(X). 
$$
More explicitly, if $[0,1] \ni t \mapsto \phi_t$ is a smooth path in $X$, then let
$$
l_1(\phi_t):= \int_0^1 ||\dot{\phi}_t||_{\phi_t} dt
$$
be its length, and set
$$
d_1(\varphi, \psi) = \inf \left\{ l_1(\phi_t), \; \; (\phi_t)_{0\leq t \leq 1} \subset \mathcal{H}_{\omega}, \; \phi_0 = \varphi, \; \phi_1 = \psi \right\},
$$
where the infimum is taken over smooth paths $t \mapsto \phi_t$ as above. It can then be seen that $(\mathcal{H}_{\omega}, d_1)$ is a metric space which is \emph{not} complete (see \cite[Theorem 2]{Darvas} and the survey article \cite{Darvassurvey} for details and background). 
The completion  $\mathcal{E}$ of $(\mathcal{H}_{\omega},d_1)$ was described by Darvas \cite{Darvas15}. For the purpose of discussing energy functionals and geodesic stability we will in particular consider the subspace $\mathcal{E}^1 \subset \mathcal{E}$ of $\omega$-psh functions of finite $L^1$-energy, i.e. the subspace of all $\varphi \in \mathcal{E}$ such that
$$
\int_X |\varphi| \omega_{\varphi}^n < +\infty.
$$

\subsubsection{Group actions}

Let $G := \mathrm{Aut}_0(X)$ be the connected component of the complex Lie group of biholomorphisms of $(X,J)$, whose Lie algebra consists of real vector fields $V$ satisfying $\mathcal{L}_V J = 0$. For each $g \in G$ we then have $[g^*\omega] = [\omega]$ (this follows from Moser's trick in symplectic geometry, see e.g. \cite[Chapter III.7]{lecturessymplectic}). The group $G$ thus acts naturally on the space $\mathcal{K} := \{\omega_{\varphi}:= \omega + dd^c\varphi: \varphi \in \mathcal{C}^{\infty}(X), \omega_{\varphi} > 0 \}$ of K\"ahler metrics on $X$, so that
$
g \cdot \xi := g^*\xi, \; g \in G, \; \xi \in \mathcal{K}.
$
The space $\mathcal{K}$ is moreover in one-to-one correspondence with the space $\mathcal{H}_0 := \mathcal{H} \cap E^{-1}(0)$ of normalized K\"ahler potentials. Following \cite[Section 5.2]{DR} the group $G$ therefore also acts on $\mathcal{H}_0$, so that $g \cdot \varphi$ is the unique element in $\mathcal{H}_0$ satisfying $g \cdot \omega_{\varphi} = \omega_{g \cdot \varphi}$.
As in \cite[Lemma 5.8]{DR}) one may moreover show that
\begin{equation} \label{equation action}
g \cdot \varphi =  g \cdot 0 + \varphi \circ g. 
\end{equation}
By the $dd^c$-lemma the function $g \cdot 0$ is smooth, hence bounded, on $X$.  

\subsubsection{Geodesics in the space of K\"ahler metrics}

There is also a natural notion of geodesic (and subgeodesic) rays in $\mathcal{H}$. To define it, suppose that $I \subseteq (0, +\infty)$ is an open interval. Let $I \ni t \mapsto \varphi_t$ be any curve of functions on $X$. Then $(\varphi_t)_{t \in I}$ can be identified with an $S^1$-invariant function $\Phi$ on $X \times \Delta_I$, where $\varphi_t(x) = \Phi(x,e^{-t+is})$, and 
$$
\Delta_I := \{\tau \in \mathbb{C} \; \vert \;  -\log|\tau| \in I \}.
$$
We will be mainly interested in the case $I = (0,+\infty)$, when $\Delta_I$ is the punctured unit disc in the complex plane. Let $p_1: X \times \Delta_I \rightarrow X$ the first projection.
We then say that a collection $(\varphi_t)_{t \in I}$ of locally bounded K\"ahler potentials on $X$ is a \emph{subgeodesic ray} if $\Phi \in \mathrm{PSH}(X \times \Delta_I,p_1^*\omega)$, i.e.
$
p_1^*\omega + dd^c\Phi \geq 0
$
in the weak sense of currents. Moreover, it is said to be a 
\emph{geodesic ray} if it is subgeodesic and maximal with respect to this property, or equivalently, if the $S^1$-invariant associated function $\Phi$ satisfies the following homogeneous complex Monge-Amp\`ere equation 
$$
(\pi_1^*\omega + dd^c\Phi)^{n+1} = 0,
$$
on $\pi^{-1}(\bar{\Delta})$ seen as a manifold with boundary. We refer to e.g. \cite{Donaldsongeodesicequation, BBGZ}, and references therein, for details on this notion. For the delicate question of regularity of geodesic rays in this setting, see \cite{ChuTosattiWeinkove}.

\subsection{The K-energy functional and cscK metrics}

Let $(X,\omega)$ be a compact K\"ahler manifold, and let $$\mathrm{Ric}(\omega) := \sqrt{-1}\partial \bar{\partial}\log \omega^n$$ be the associated Ricci curvature form. We say that $\omega$ is a \emph{cscK metric} if it satisfies the cscK equation 
\begin{equation} \label{equation cscK}
\mathcal{S}(\omega) = \bar{\mathcal{S}},
\end{equation}
where $$\mathcal{S}(\omega):= \mathrm{tr}_{\omega} \mathrm{Ric}(\omega) := n \frac{\mathrm{Ric(\omega)} \wedge \omega^{n-1}}{\omega^n}$$ is the scalar curvature of $\omega$ and $\bar{\mathcal{S}}$ is the mean scalar curvature, given by
\begin{equation}
\bar{\mathcal{S}} := V^{-1} \int_X S(\omega) \; \omega^n =  n\frac{\int_X \mathrm{Ric}(\omega) \wedge \omega^{n-1}}{ \int_X \omega^n} := n  \frac{(c_1(X) \cdot  \alpha^{n-1})_X}{( \alpha^n)_X}.
\end{equation}
As observed by Mabuchi in \cite{MabuchiKenergy1} the cscK metrics can be characterized by variational methods, as the minima of a certain functional called the \emph{Mabuchi K-energy functional}. 
It is the unique functional $M:  \mathrm{H}_{\omega} \rightarrow \mathbb{R}$ satisfying $\mathrm{M}(0) = 0$ and \begin{equation*} 
\frac{d}{dt} \mathrm{M}(\varphi_t) = - V^{-1} \int_X \dot{\varphi}_t(\mathcal{S}(\omega_{\varphi_t}) - \bar{\mathcal{S}})\; \omega_{\varphi_t}^n.
\end{equation*} 
for any smooth path $(\varphi_t)_{t \geq 0}$ in the $\mathcal{H}$. Note that part of the assertion of Mabuchi was that such a functional exists, and whenever they exist, the minimizers of this functional are precisely the cscK potentials $\varphi \in \mathcal{H}_{\omega}$, i.e. the corresponding K\"ahler form $\omega_{\varphi} := \omega + dd^c\varphi$ satisfies the cscK equation \eqref{equation cscK}. 

The K-energy can moreover be extended to the setting of locally bounded $\omega$-psh functions on $X$, i.e. to a functional $\mathrm{M}: \mathrm{PSH}(X,\omega) \cap L^{\infty}(X) \rightarrow \mathbb{R} \cup \{+\infty\}$. Similarily, there is an extension to the space $\mathcal{E}^1$ of locally finite energy potentials. To see this, recall that the K-energy functional can be written explicity using the so called Chen-Tian formula as the sum 
$$\mathrm{M} = \mathrm{M}_{pp} + \mathrm{M}_{ent}$$ 
of a pluripotential and an entropy part. Here
$$
\mathrm{M}_{ent}(\varphi) := V^{-1} \int_X \log \left(\frac{\omega_{\varphi}^n}{\omega^n}\right) \omega_{\varphi}^n \in [0,+\infty)
$$ 
and
$\mathrm{M}_{pp}(\varphi)$ is a linear combination of terms of the form 
$$
\int_X \varphi \omega^k \wedge \omega_{\varphi}^{n-k}
$$
and
$$
\int_X \varphi \omega^k \wedge \omega_{\varphi}^{n-k-1} \wedge \mathrm{Ric}(\omega).
$$
The pluripotential terms can be made sense of due to \cite{BBGZ, Darvas15}. The entropy term in the formula for $\mathrm{M}$ can always be made sense of as a lower semicontinous functional $\mathrm{M}_{ent}: \mathcal{E}^1 \rightarrow [0,+\infty]$, defined as the relative entropy of the probability measures $\omega_{\varphi}^n/V$ and $\omega^n/V$ (see \cite{ChenChengII}, \cite{Boucksomsurvey} and references therein).

\subsection{Holomorphic vector fields, the $\yen$-invariant and geodesic stability} \label{Section vector fields}

In order to define the notion of geodesic stability, we first introduce the notation for holomorphic vector fields that we will use: 
Suppose that $(X,\omega)$ is a compact K\"ahler manifold and denote by $J: TX \rightarrow TX$ the associated complex structure. A real vector field on $X$ is a section of the real tangent bundle $TX$ of $X$. It is said to be \emph{real holomorphic} if its flow preserves the complex structure, i.e. it has vanishing Lie derivative $L_VJ = 0$.
A holomorphic vector field on a \emph{compact} manifold is automatically $\mathbb{C}$-complete, and its flow $\phi_t$ is an action of $(\mathbb{C},+)$ on $X$ by holomorphic automorphisms. Conversely, one may associate to every additive action $\phi: \mathbb{C} \times X \rightarrow X$ by holomorphic automorphisms on $X$ the vector field 
$$
V_{\phi}(x) := \frac{d}{dt}  \phi(t,x)_{\vert t = 0},
$$
called the infinitesimal generator of $X$. The vector field $V_{\phi}$ is holomorphic and $\mathbb{C}$-complete on $X$, with the flow $\phi$. 

\begin{mydef}
A real holomorphic vector field $V$ on $X$ is said to be \emph{Hamiltonian} if it admits a \emph{Hamiltonian potential} $h_{\omega}^{V} \in C^{\infty}(X,\mathbb{R})$ such that the contraction 
$$
i_{V}(\omega) := V \rfloor \omega = \sqrt{-1} \bar{\partial} h_{\omega}^V .
$$
\end{mydef}

\begin{rem}
\emph
{Equivalently, a real holomorphic vector field admits a Hamiltonian potential if and only if it has a zero somewhere, see LeBrun-Simanca \cite{LeBrunSimanca}. }
\end{rem}  
\noindent 
Note further that the Hamiltonian potential is unique up to constants, so to relieve this ambiguity we impose the normalization
$$
\int_X h_{\omega}^V \omega^n = 0.
$$
{For the purpose of comparing with the situation for polarized manifolds $(X,L)$ it is interesting to recall that Hamiltonian vector fields are precisely those that lift to line bundles, see \cite[Lemma 12]{Donaldsontoric}. A real holomorphic Hamiltonian vector field is automatically a \emph{Killing field}, since $L_V J = L_V \omega = 0$ implies that also $L_V g = 0$ for the Riemannian metric associated to the K\"ahler form $\omega$.

\subsubsection{Geodesic rays induced by holomorphic vector fields}

For future use we recall also the notion of geodesic rays arising from holomorphic vector fields on $X$: 
In order to explain this notion, recall that the connected component of the Lie group $G := \mathrm{Aut}_0(X)$ of automorphisms of $X$ act on $\mathcal{K}$ by pullback $g.\omega := g^*\omega$, and induces a corresponding action on $\mathcal{H}_0$ via the identification $\mathcal{H}_0 \simeq \mathcal{K}$, as described in \eqref{equation action}. If $V$ is a real holomorphic Hamiltonian vector field on $X$, then $\exp(tJV)$ is an element of the Lie group $G$ for each $t \in [0,+\infty)$. If we set $$\omega_t := \exp(tJV)^*\omega, \; \; t \in [0,+\infty)$$ then $(\omega_t)_{t \geq 0}$ is a geodesic ray in $\mathcal{K}$, see \cite{Mabuchisymplectic}. The corresponding geodesic ray in $\mathcal{H}_0$ is denoted by $\varphi_t := \exp(tJV).\varphi_0$, where $\varphi_0 = 0$ such that $\omega_{\varphi_0} = \omega$.

\begin{mydef} \label{Definition induced vector field geodesic} A geodesic ray is said to be \emph{induced by the holomorphic vector field} if it is of the form $\varphi_t = \exp(tJV).\varphi_0$ for some real holomorphic Hamiltonian vector field $V$ on $X$, 
\end{mydef}

\subsubsection{The $\yen$-invariant and geodesic stability} \label{Section geodesic stability definition}

In order to state the definition of geodesic stability that we will use, let $[0,+\infty) \ni t \mapsto \rho(t) := \varphi_t$ be a given locally finite energy geodesic ray in $\mathcal{E}_0^1 := \mathcal{E}^1 \cap E^{-1}(0)$. 
Following \cite{ChenTang, ChenChengII, ChenChengIII} we consider the following numerical invariant 
$$
\yen(\rho(t)) := \lim_{t \rightarrow +\infty} t^{-1}\mathrm{M}(\rho(t)),
$$
associated to the given geodesic ray $\rho(t)$. 
This quantity is well-defined by convexity of the K-energy functional, see \cite{BB, ChenLiPaun} and also \cite{BDLconvexity} for convexity of the extension of the K-energy to finite energy spaces.
Recall also that two geodesic rays $(\varphi_t)$ and $(\xi_t)$ are said to be \emph{parallel} if 
$$d_{1,G}(\varphi_t, \xi_t) := \inf_{g,h \in G}d_1(g.\varphi_t, h.\xi_t) < C$$ for some constant $C > 0$ independent of $t$.

\begin{mydef} \label{Definition geodesic stability}
The pair $(X,[\omega])$ is said to be geodesically stable if and only if $\yen(\rho(t)) \geq 0$ for every unit speed geodesic ray $\rho(t)$, with equality precisely when $\rho(t)$ is induced by a holomorphic vector field on $X$. 
\end{mydef}

\noindent Note that in the paper \cite{ChenChengIII} of Chen-Cheng geodesic stability was defined with respect to rays parallel to geodesics induced by holomorphic vector fields, which we do not do here. However, our definition turns out to be equivalent to geodesic stability with respect to rays induced by holomorphic vector fields. Indeed, we have the following:

\begin{prop} \label{Prop equivalent conditions cscK assumption} \emph{(cf. \cite[Proposition 4.10]{SD2})}
Suppose that $(X,\omega)$ is a cscK manifold. Let $[0,+\infty) \ni t \mapsto \rho(t)$ be a unit speed geodesic ray in $\mathcal{E}_0^1$. Then the following are equivalent: 
\begin{enumerate}
    \item The ray $\rho(t)$ is of finite $d_{1,G}$-length, i.e. $d_{1,G}(\varphi_t, \varphi_0) < C$ for some constant $C > 0$ independent of $t$
    \item The ray $\rho(t)$ is parallel to a ray induced by a holomorphic vector field
    \item The ray $\rho(t)$ is itself induced by a holomorphic vector field
\end{enumerate}
\end{prop}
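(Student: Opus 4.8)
The plan is to prove the cyclic chain $(3) \Rightarrow (2) \Rightarrow (1) \Rightarrow (3)$, of which only the last implication is substantial. The implication $(3) \Rightarrow (2)$ is immediate, since a ray induced by a holomorphic vector field is parallel to itself. For $(2) \Rightarrow (1)$, suppose $\rho$ is parallel to a ray $\xi_t = \exp(tJV).0$ induced by a real holomorphic Hamiltonian vector field $V$. Since $\exp(tJV) \in G$, the ray $\xi$ lies in a single $G$-orbit, so taking $g = \exp(-tJV)$ in the infimum defining $d_{1,G}$ gives $d_{1,G}(\xi_t, \xi_0) \le d_1(\exp(-tJV).\xi_t, \xi_0) = d_1(\xi_0, \xi_0) = 0$; thus $\xi$ has vanishing $d_{1,G}$-length. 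Because $d_1$ is $G$-invariant, $d_{1,G}$ obeys the triangle inequality, and hence $d_{1,G}(\rho(t), \rho(0)) \le d_{1,G}(\rho(t), \xi_t) + d_{1,G}(\xi_t, \xi_0) + d_{1,G}(\xi_0, \rho(0))$ remains bounded in $t$, so $\rho$ has finite $d_{1,G}$-length.

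For $(1) \Rightarrow (3)$, where the cscK hypothesis is indispensable, I would proceed in three steps. First, I would record two consequences of $\omega$ being cscK: (a) by convexity of the extended K-energy along finite-energy geodesics \cite{BB, ChenLiPaun, BDLconvexity}, every cscK potential is a global minimizer of $\mathrm{M}$ on $\mathcal{E}^1$ with common value $\mathrm{M}(0) = 0$, so $\mathrm{M} \ge 0$ on $\mathcal{E}^1$; and (b) since $g.0$ is again a cscK potential, hence a minimizer, for every $g \in G$, the cocycle and diffeomorphism-invariance properties of $\mathrm{M}$ show that $\mathrm{M}$ is $G$-invariant, $\mathrm{M}(g.\psi) = \mathrm{M}(\psi)$ for all $\psi \in \mathcal{E}^1$. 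Second --- the decisive step --- one upgrades the metric hypothesis $d_{1,G}(\rho(t), 0) \le C$ to the energetic statement $\yen(\rho) = 0$ (equivalently, $\mathrm{M}$ bounded along $\rho$); this is the content of \cite[Proposition 4.10]{SD2}, and rests on the resolution of the properness conjecture by Chen--Cheng \cite{ChenChengI, ChenChengII, ChenChengIII}, which yields the $G$-coercivity of $\mathrm{M}$, together with the regularity theory for weak minimizers of the K-energy \cite{BDL}. Granting $\yen(\rho) = 0$ and normalising $\rho(0) = 0$, convexity of $t \mapsto \mathrm{M}(\rho(t))$ forces $\mathrm{M}(\rho(t)) \le \mathrm{M}(\rho(0)) + t\,\yen(\rho) = 0$, so together with (a) we get $\mathrm{M}(\rho(t)) \equiv 0$; hence $\omega_{\rho(t)}$ is a cscK metric for every $t \ge 0$. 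Third, by uniqueness of cscK metrics in a fixed K\"ahler class up to $\mathrm{Aut}_0(X)$ \cite{BB}, the cscK potentials form a single $G$-orbit along which the $d_1$-geodesic rays through $0$ are precisely the one-parameter subgroup orbits $t \mapsto \exp(tJV).0$ with $V$ a real holomorphic Hamiltonian (equivalently Killing) field (see \cite{LeBrunSimanca}); since $\rho$ is such a $d_1$-geodesic ray, it is induced by a holomorphic vector field, as desired.

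I expect the main obstacle to be precisely the middle step above: passing from finiteness of the $d_{1,G}$-length of $\rho$ to $\yen(\rho) = 0$. Finiteness of $d_{1,G}$-length is a priori much weaker than any bound on the K-energy --- the entropy term in the Chen--Tian decomposition of $\mathrm{M}$ is not controlled by $d_1$ on bounded sets, and $d_{1,G}$-balls are still larger --- so one cannot argue by any naive "reverse coercivity" and must genuinely use that $\rho$ is a geodesic ray (not an arbitrary path) together with the deep existence and uniqueness theory, as in \cite[Proposition 4.10]{SD2}. By contrast, the triangle inequality for $d_{1,G}$, the $G$-invariance of $\mathrm{M}$ in the cscK case, and the identification of a $d_1$-geodesic path of cscK metrics with a holomorphic vector field flow are routine.
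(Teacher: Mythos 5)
Your proof has the same skeleton as the paper's: the cycle $(3)\Rightarrow(2)\Rightarrow(1)\Rightarrow(3)$, with $(3)\Rightarrow(2)$ trivial, $(2)\Rightarrow(1)$ by the triangle inequality for $d_{1,G}$ together with the fact that a ray of the form $\exp(tJV).\varphi_0$ has vanishing $d_{1,G}$-length (this is exactly the paper's computation, and your version even handles a general base point $\xi_0\neq\rho(0)$), and the hard implication $(1)\Rightarrow(3)$ ultimately resting on \cite[Proposition 4.10]{SD2} --- which is precisely what the paper does, since it cites that proposition for the full equivalence of $(1)$ and $(3)$ and proves nothing more in that direction. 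So as a logical argument your proposal is correct and essentially identical in structure.

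One caution about your attempted unpacking of $(1)\Rightarrow(3)$: the mechanism you indicate for the ``decisive step'' would not work if you tried to make it self-contained. $G$-coercivity of $\mathrm{M}$ (Chen--Cheng) is an inequality of the form $\mathrm{M}(\varphi)\geq \delta\, d_{1,G}(0,\varphi)-C$, i.e.\ it bounds $\mathrm{M}$ \emph{from below} by the distance; from $d_{1,G}(\rho(t),\rho(0))\leq C$ it therefore yields only $\mathrm{M}(\rho(t))\geq -C'$, which is vacuous here since $\mathrm{M}\geq 0$ in the cscK case. What you need for $\yen(\rho)=0$ is an \emph{upper} bound $\mathrm{M}(\rho(t))=o(t)$, and, as you yourself note, the entropy term is not controlled by $d_1$ (nor by $d_{1,G}$), so coercivity plus $G$-invariance of $\mathrm{M}$ cannot produce it; the actual proofs in \cite{BDL, ChenChengIII, SD2} go through the structure of the minimizer set $G\cdot 0$ and the behaviour of geodesic rays at bounded distance from it, rather than through an a priori bound on $\mathrm{M}$ along the ray. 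Since you defer this step to \cite[Proposition 4.10]{SD2} (which in fact gives all of $(1)\Leftrightarrow(3)$, so your subsequent steps --- convexity, $\mathrm{M}\equiv 0$, regularity of minimizers, Berman--Berndtsson uniqueness, identification of geodesics in the orbit with one-parameter subgroups --- are then redundant), the overall argument stands, but the gloss on what that proposition says and how it is proved should be corrected.
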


\begin{proof}
The implication $(1)$ and $(3)$ is precisely the statement of \cite[Proposition 4.10]{SD2}. The implication $(3) \Rightarrow (2)$ is immediate, since any ray is parallel to itself. Finally, if $\rho(t)$ is parallel to a ray $\xi_t^V$ induced by a holomorphic vector field (and with $\xi_0^V = \rho(0)$), then 
$$
d_{1,G}(\rho(t), \rho(0)) \leq d_{1,G}(\rho(t), \xi_t^V) + d_{1,G}(\xi_t^V, \rho(0)) < C  
$$
by the triangle inequality. Indeed, the first term is bounded by $C$ by the assumption that $\rho(t)$ and $\xi_t^V$ are parallel. Moreover 
$$
d_{1,G}(\xi_t^V, \rho(0)) = \inf_{g,h \in G} d_1(g.\xi_t^V, h.\rho(0)) = d_1(\rho(0), \rho(0)) = 0,
$$
since $xi_t^V = \exp(tJV).\rho(0)$ for some real holomorphic Hamiltonian vector field $V$ on $X$ (so in particular, $exp(tJV) \in G$). Hence $(2) \Rightarrow (1)$. 
Putting this together, we conclude that $(2) \Rightarrow (1) \Rightarrow (3) \Rightarrow (2)$, thus completing the proof. 
\end{proof}

\noindent As a corollary we have the following: 

\begin{cor}
Geodesic stability (Definition \ref{Definition geodesic stability}) is equivalent to geodesic stability in the sense of Chen-Cheng \cite{ChenChengIII}. 
\end{cor}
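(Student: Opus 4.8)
We want to show that geodesic stability in the sense of Definition \ref{Definition geodesic stability} agrees with the notion of \cite{ChenChengIII}. Both notions impose $\yen(\rho)\geq 0$ for every unit-speed geodesic ray $\rho$, and differ only in the description of the equality case: Definition \ref{Definition geodesic stability} characterizes it by the rays $\rho$ that are \emph{themselves} induced by a holomorphic vector field $V$, i.e. of the form $\rho(t)=\exp(tJV).0$, whereas \cite{ChenChengIII} characterizes it by the a priori larger class of rays that are merely \emph{parallel} to such a ray. The inclusion $\{\text{induced}\}\subseteq\{\text{parallel to an induced ray}\}$ is trivial since a ray is parallel to itself, so the entire content of the corollary is the reverse inclusion among rays with $\yen(\rho)=0$, and this is precisely what Proposition \ref{Prop equivalent conditions cscK assumption} supplies whenever $(X,\omega)$ is cscK.

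For the implication ``geodesically stable in the sense of \cite{ChenChengIII} $\Rightarrow$ geodesically stable in the sense of Definition \ref{Definition geodesic stability}'' the plan is to reduce to the cscK case. Geodesic stability depends only on the class $[\omega]$, so we are free to change the reference form within $[\omega]$. If $(X,[\omega])$ is geodesically stable in the sense of \cite{ChenChengIII}, then by the Chen--Cheng theorem \cite{ChenChengI, ChenChengII, ChenChengIII} the class $[\omega]$ contains a cscK metric; replacing $\omega$ by this metric places us in the hypotheses of Proposition \ref{Prop equivalent conditions cscK assumption}, whose equivalence $(2)\Leftrightarrow(3)$ states exactly that a unit-speed geodesic ray is parallel to a ray induced by a holomorphic vector field if and only if it is itself induced by one. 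This identifies the two equality classes, and the desired implication follows.

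For the converse, suppose $(X,[\omega])$ is geodesically stable in the sense of Definition \ref{Definition geodesic stability}. Then any ray induced by a holomorphic vector field lies in the equality class, so has $\yen=0$ by definition. The key point is then the invariance of the asymptotic slope $\yen(\rho)=\lim_{t\to\infty}t^{-1}\mathrm{M}(\rho(t))$ under passing to a parallel ray, which follows from the convexity and large-time asymptotics of the (extended) K-energy developed in \cite{BDL, BDLconvexity}: thus, if $\rho$ is parallel to a ray $\xi^V$ induced by a holomorphic vector field, then $\yen(\rho)=\yen(\xi^V)=0$, and the rigidity built into Definition \ref{Definition geodesic stability} forces $\rho$ to itself be induced by a holomorphic vector field. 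Hence the two equality classes coincide, and combined with the shared inequality $\yen\geq 0$ this shows the two notions are equivalent.

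The step I expect to be the main obstacle is the parallel-invariance of $\yen$ used in the last paragraph, i.e. that $\lim_{t\to\infty}t^{-1}\mathrm{M}(\rho(t))$ is unchanged under $d_{1,G}$-bounded perturbations of the geodesic ray $\rho$; modulo this input everything reduces to Proposition \ref{Prop equivalent conditions cscK assumption} together with the Chen--Cheng theorem. By contrast, the reduction to the cscK case in the other direction is routine. One may also note that, in the presence of a cscK metric in $[\omega]$, the converse implication follows directly from Proposition \ref{Prop equivalent conditions cscK assumption} as well, so the only genuinely new ingredient is handling the equality case of Definition \ref{Definition geodesic stability} without assuming a priori that a cscK metric exists.
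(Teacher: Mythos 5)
Your implication ``geodesically stable in the sense of Chen--Cheng $\Rightarrow$ geodesically stable in the sense of Definition \ref{Definition geodesic stability}'' is exactly the paper's argument: invoke the Chen--Cheng existence theorem to put a cscK metric in $[\omega]$, then use Proposition \ref{Prop equivalent conditions cscK assumption} (parallel-to-induced $\Leftrightarrow$ induced) to match the equality cases. The problem is the converse direction. The paper disposes of it in one line: since every ray induced by a holomorphic vector field is in particular parallel to itself, any ray which is \emph{not} parallel to an induced ray is in particular not induced, so Definition \ref{Definition geodesic stability} already forces $\yen>0$ on it; that is all Chen--Cheng's stability asks of such rays. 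You instead set out to prove the stronger statement that every ray parallel to an induced ray has $\yen=0$, and for that you invoke ``parallel-invariance of $\yen$ under $d_{1,G}$-bounded perturbations,'' attributed to \cite{BDL, BDLconvexity} and flagged by you as the main obstacle. That step is a genuine gap: it does not follow from the convexity results you cite. The distance $d_{1,G}$ allows twisting by $t$-dependent elements of $G=\mathrm{Aut}_0(X)$, and the K-energy is not $G$-invariant unless the Futaki character vanishes; for example, a ray $\exp(tJV).\varphi_0$ with $\mathrm{Fut}(V)\neq 0$ satisfies $d_{1,G}(\exp(tJV).\varphi_0,\varphi_0)=0$ for all $t$, yet its slope is $\mathrm{Fut}(V)\neq 0$ while the constant ray has slope $0$, so the invariance is false in general. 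Even after noting that $\yen\geq 0$ on all rays forces the Futaki character to vanish, the entropy part of $\mathrm{M}$ is only lower semicontinuous with respect to $d_1$, so equality of slopes along merely parallel (possibly $t$-dependently twisted) rays is not a formal consequence of convexity; in the cscK setting this is precisely the kind of statement that is obtained through the rigidity of Proposition \ref{Prop equivalent conditions cscK assumption}, not through a soft invariance argument.

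The fix is simply to delete the heavy step: for the direction ``Definition \ref{Definition geodesic stability} $\Rightarrow$ Chen--Cheng'' the trivial observation you already record in your first paragraph (induced rays are parallel to themselves) suffices, read against the actual form of Chen--Cheng's condition, and then your proof coincides with the paper's. As written, however, the proposal rests on an unproven (and, as stated, incorrect in general) parallel-invariance claim, so it is not complete.
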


\begin{proof}
Geodesic stability in our sense clearly implies geodesic stability in the sense of Chen-Cheng \cite{ChenChengIII} (any ray is in particular parallel to itself). The latter geodesic stability condition was moreover proven in \cite{ChenChengIII} to be equivalent to existence of cscK metrics. On the other hand, suppose that $(X,\omega)$ is a cscK manifold. Then  it follows from Proposition \ref{Prop equivalent conditions cscK assumption} that a geodesic ray is parallel to a ray induced by a holomorphic vector field, if and only if it is itself induced by a holomorphic vector field. In other words, geodesic stability in the sense of Chen-Cheng implies the geodesic stability notion of Definition \ref{Definition geodesic stability}.
Putting this together, the considered geodesic stability notions must be equivalent. 
\end{proof}

\subsection{A weak variant of the geodesic stability notion} \label{Section Sgeodstability}

\noindent In order to later compare geodesic stability to K-stability notions (see Sections \ref{Section Kps as weak geod stab} and \ref{Section proof of main results}) it is also natural to introduce a slightly more flexible terminology. In this direction, we give the following definition, which emphasizes possible differences in the vanishing condition for the $\yen$-invariant: 

\begin{mydef} \emph{($(S,S_0)$-geodesic stability)}
Let $S_0 \subset S$ be subsets of the set of locally finite energy geodesic rays in $\mathcal{E}_0^1$. The pair $(X,[\omega])$ is then $(S,S_0)$-geodesically stable if and only if $\yen(\rho(t)) \geq 0$ for every unit speed geodesic ray $\rho(t) \in S$, with equality precisely when $\rho(t) \in S_0$. 
\end{mydef}

\noindent If $S$ is taken to be the full set of unit speed locally finite energy geodesic rays in $\mathcal{E}_0^1$, and $S_0$ is as any of the conditions $(1)-(3)$ in Proposition \ref{Prop equivalent conditions cscK assumption}, then $(S,S_0)$-geodesic stability of $(X,[\omega])$ is equivalent to geodesic stability of $(X,[\omega])$ (in the sense of Chen-Cheng, alternatively Definition \ref{Definition geodesic stability}).  We next recall the definitions of various stability notions in algebraic geometry, and show that they fit into the framework of the above notion of $(S,S_0)$-geodesic stability.

\bigskip

\section{Notions of K-polystability in K\"ahler geometry} \label{Section 3}

\noindent In this section we recall the general formalism of transcendental K-stability for K\"ahler manifolds, first introduced in \cite{SD1, DervanRoss}, and describe how various stability notions in algebraic geometry can be naturally defined from this point of view. 

\subsection{Preliminaries on test configurations}

We first recall the concept of test configurations for $X$, following \cite{SD1, SD2}. As a reference for this section we use \cite[Section 2.3]{SD2}. 

\begin{mydef}
A test configuration for $X$ consists of
\begin{itemize}
  \item \emph{a normal complex space $\mathcal{X}$, compact and K\"ahler, with a flat 
morphism $\pi: \mathcal{X} \rightarrow \mathbb{P}^1$}
\item \emph{ a $\mathbb{C}^*$-action $\rho$ on $\mathcal{X}$ lifting the canonical action on $\mathbb{P}^1$ }
\item \emph{ a $\mathbb{C}^*$-equivariant isomorphism }
\begin{equation} \label{equiviso}
\mathcal{X} \setminus \pi^{-1}(0) \simeq X \times (\mathbb{P}^1 \setminus \{0\})
\end{equation}
\end{itemize}
\end{mydef}

\begin{rem}
\emph{Note that since $\pi$ is flat the central fiber $\mathcal{X}_0 := \pi^{-1}(0)$ is a Cartier divisor, so $\mathcal{X} \setminus \mathcal{X}_0$ is dense in $\mathcal{X}$ in Zariski topology. }
\end{rem}

\noindent The \emph{trivial} test configuration for $X$ is given by $(\mathcal{X} := X \times \mathbb{P}^1, \lambda_{\mathrm{triv}}, p_2)$, where $p_2: X \times \mathbb{P}^1 \rightarrow \mathbb{P}^1$ is the projection on the $2^{\mathrm{nd}}$ factor, and $\lambda_{\mathrm{triv}}: \mathbb{C}^* \times \mathcal{X} \rightarrow \mathcal{X}$, $(\tau, (x,z)) \mapsto (x, \tau z)$ is the $\mathbb{C}^*$-action that acts trivially on the first factor. If we instead let $\sigma: \mathbb{C}^* \times X \rightarrow X$ be any $\mathbb{C}^*$-action on $X$, then we obtain an induced test configuration as above with $\lambda(\tau, (x,z)) := (\sigma(\tau,x), \tau z)$ (by also taking the compactification so that the fiber at inifinity is trivial). Such test configurations are called \emph{product} test configurations of $(X,\alpha)$. In either case, we identify $X$ with $X \times \{ 1 \}$ and the canonical equivariant isomorphism \eqref{equiviso} is then explicitly induced by the isomorphisms $X \cong X \times \{1\} \rightarrow X \times \{ \tau \}$ given by $x \mapsto \lambda(\tau, (x,1)) =: \lambda(\tau) \cdot x$. Note moreover that if $V$ is any real holomorphic Hamiltonian vector field on $X$, then it may or may not generate a $\mathbb{C}^*$-action, and only if it does there is a clear way to associate a product test configuration to it (as described above). This is a subtle key issue.

We further define the notion of test configuration for $(X,\alpha)$, where $\alpha \in H^{1,1}(X,\mathbb{R})$ is any K\"ahler class on $X$: In order to do so, we first recall that the notions of K\"ahler forms and plurisubharmonic functions can be defined on complex spaces, see \cite{SD2, Zakthesis} for details on this in the present context. If $(X,L)$ is a polarized manifold, then a test configuration for $(X,L)$ is given by a $\mathbb{C}^*$-equivariant flat family $(\mathcal{X},\mathcal{L}) \rightarrow \mathbb{C}$, see e.g. \cite{BHJ1} and references therein for details and background on this classical definition first used in this form in \cite{Donaldsontoric}. More generally, we will work with the formalism for arbitrary K\"ahler manifolds, of which the above can be considered a special case. 
A test configuration for the polarized pair $(X,\alpha)$ is then defined as follows:

\begin{mydef}
A test configuration for $(X,\alpha)$ is a pair $(\mathcal{X},\mathcal{A})$ where $\mathcal{X}$ is a test configuration for $X$, and $\mathcal{A} \in H^{1,1}_{BC}(X,\mathbb{R})^{\mathbb{C}^*}$ is a  $\mathbb{C}^*$-invariant $(1,1)-$Bott-Chern cohomology class whose image under \eqref{equiviso}
is $p_1^*\alpha$. 
\end{mydef}

\noindent We give a few remarks and examples on how to compare cohomological test configurations with \emph{algebraic test configurations $(\mathcal{X},\mathcal{L})$ for a polarized manifold $(X,L)$}. 
\begin{enumerate}
\item If $(X,L)$ is any compact K\"ahler manifold endowed with an ample line bundle $L$ (so $X$ is projective), and $(\mathcal{X}, \mathcal{L})$ is a test configuration for $(X,L)$ in the usual algebraic sense, cf. e.g. \cite{Donaldsontoric}, then $(\mathcal{X}, c_1(\mathcal{L}))$ is a cohomological test configuration for $(X, c_1(L))$. This observation is useful, since many examples of algebraic test configurations $(\mathcal{X}, \mathcal{L})$ for polarized manifolds $(X,L)$ are known, see e.g. \cite{Gabornotes, Tianlecturenotes} and references therein.

\item There are more cohomological test configurations for $(X, c_1(L))$ than there are algebraic test configurations for $(X,L)$ (take for instance $(\mathcal{X}, \mathcal{A})$ with $\mathcal{A}$ a transcendental class as in the above definition), but in some cases the ensuing stability notions can nonetheless be seen to be equivalent (see \cite[Section 3]{SD1}).
\end{enumerate}

\subsection{Intersection theoretic numerical invariants}

Following \cite{SD1} we recall the following intersection theoretic definition of the classical Donaldson-Futaki invariant:

\begin{mydef} (\cite{SD1, DervanRoss}) \emph{To any cohomological test configuration $(\mathcal{X},\mathcal{A})$ for $(X,\alpha)$ we may associate its \emph{Donaldson-Futaki invariant} $\mathrm{DF}(\mathcal{X},\mathcal{A})$ and its \emph{non-Archimedean Mabuchi functional} ${{\mathrm{M}}^{\mathrm{NA}}}(\mathcal{X},\mathcal{A})$, first introduced in \cite{BHJ1}. They are given respectively by the following intersection numbers
\begin{equation*}
\mathrm{DF}(\mathcal{X},\mathcal{A}) := \frac{\bar{\mathcal{S}}}{n+1}V^{-1} (\mathcal{A}^{n+1})_{\hat{\mathcal{X}}} + V^{-1}(K_{\mathcal{X}/\mathbb{P}^1} \cdot \mathcal{A}^n)_{\hat{\mathcal{X}}}
\end{equation*}
and
$$
{{\mathrm{M}}^{\mathrm{NA}}}(\mathcal{X},\mathcal{A}) := \mathrm{DF}(\mathcal{X},\mathcal{A}) + ((\mathcal{X}_{0,\mathrm{red}} - \mathcal{X}_0) \cdot \mathcal{A}^n)_{\hat{\mathcal{X}}}
$$
computed on any smooth and dominating model $\tilde{\mathcal{X}}$ of $\mathcal{X}$ (due to the projection formula it does not matter which one). 
Note that $\mathrm{DF}(\mathcal{X},\mathcal{A}) \geq {{\mathrm{M}}^{\mathrm{NA}}}(\mathcal{X},\mathcal{A})$ with equality precisely when $\mathcal{X}_0$ is reduced. }
\end{mydef}

\noindent In case $\mathcal{X}$ is smooth, $K_{\mathcal{X}/\mathbb{P}^1} := K_{\mathcal{X}} - \pi^*K_{\mathbb{P}^1}$ denotes the relative canonical class taken with respect to the flat morphism $\pi:\mathcal{X} \rightarrow \mathbb{P}^1$. In the general case of a normal (possibly singular) test configuration $\mathcal{X}$ for $X$, we however need to give meaning to the intersection number $K_{\mathcal{X}} \cdot \mathcal{A}_1 \cdot \dots \cdot \mathcal{A}_n$, for $\mathcal{A}_i \in H^{1,1}_{\mathrm{BC}}(\mathcal{X},\mathbb{R})$. To do this, suppose that $\tilde{\mathcal{X}}$ is a smooth model for $\mathcal{X}$, with $\pi': \tilde{\mathcal{X}} \rightarrow \mathcal{X}$ the associated morphism. Since $\tilde{\mathcal{X}}$ is smooth the canonical class $K_{\tilde{\mathcal{X}}} := \omega_{\tilde{\mathcal{X}}}$ is a line bundle. Now consider 
$
\omega_{\mathcal{X}} := \mathcal{O}(K_{\mathcal{X}}) := (\pi'_*\omega_{\tilde{\mathcal{X}}})^{**},
$
i.e. the "reflexive extension" of $\omega_{\tilde{\mathcal{X}}}$, which is a rank $1$ reflexive sheaf on $\mathcal{X}$. We then set 
\begin{equation}
(\omega_{\mathcal{X}} \cdot \mathcal{A}_1 \cdot \dots \cdot \mathcal{A}_n) := ( K_{\tilde{\mathcal{X}}} \cdot \pi'^*\mathcal{A}_1 \cdot \dots \cdot \pi'^*\mathcal{A}_n).
\end{equation}
Using the projection formula (or an argument of the type \cite[Lemma 2.15]{DervanRoss} in the K\"ahler category) it is straightforward to see that the above intersection number is independent of the choice of model/resolution $\pi': \tilde{\mathcal{X}} \rightarrow \mathcal{X}$. In particular this holds for the Donaldson-Futaki invariant $\mathrm{DF}$ and the non-archimedean Mabuchi functional ${{\mathrm{M}}^{\mathrm{NA}}}$.


\subsection{Product test configurations and several definitions of K-polystability} \label{Section definitions Kps}

A number of natural variants of K-polystability for K\"ahler manifolds are given as follows: 

\begin{mydef} \label{Def Kps} \emph{In analogy with the usual definition for polarized manifolds, and following \cite[Section 3]{SD1}, we say that }
\begin{itemize}
\item \emph{$(X,\alpha)$ is \emph{K-semistable} if $\mathrm{DF}(\mathcal{X},\mathcal{A}) \geq 0$ for all normal and relatively K\"ahler test configurations $(\mathcal{X}, \mathcal{A})$ for $(X,\alpha)$. }
\item \emph{$(X,\alpha)$ is \emph{K-polystable} if it is K-semistable, and in addition $\mathrm{DF}(\mathcal{X},\mathcal{A}) = 0$ if and only if $\mathcal{X}$ is a product, where the latter means that one of the following conditions hold:}
\begin{enumerate}
    \item $\mathcal{X}_{\vert \pi^{-1}(\mathbb{C})}$
    \item $\mathcal{X}_0 := \pi^{-1}(0) \simeq X$
    \item $\mathcal{X}_{\vert \pi^{-1}(\Delta_r)} = X \times \Delta_r$, where $r \in (0,+\infty)$ and $\Delta_r := \{ z \in \mathbb{C} \; \vert \; |z| \leq r \}$.
    
\end{enumerate}
\emph{We will refer to these conditions as strong, weak and $r$-K-polystability respectively.}
\end{itemize}
\end{mydef}

\noindent Note that demanding that $\mathcal{X}$ is $\mathbb{C}^*$-equivariantly isomorphic to $X \times \mathbb{P}^1$ is not enough: For instance, there are (algebraic) product test configurations $(X,L) \times \mathbb{C}$ whose Donaldson-Futaki invariant vanishes, but whose compactifications over $\mathbb{P}^1$ (and thus their corresponding cohomological test configuration $(\bar{\mathcal{X}}, c_1(\bar{\mathcal{L}}))$) is \emph{not} a product. See e.g. \cite[Example 2.8]{BHJ1}. Hence the definition $(1)$ is the strongest notion of product that makes sense to consider in the context of K-polystability.

When it is necessary to make the distinction, we will refer to the above stability notions as \emph{cohomological}. In the same vein, we refer to the analogous stability notions for polarized manifolds (see e.g. \cite{Berman, BHJ1, Donaldsontoric}) as \emph{algebraic}. It is an interesting topic to compare cohomological and algebraic stability notions to eachother.

Regarding the cohomological notions, it was proven in \cite[Theorem A]{SD1} that cscK manifolds are always K-semistable. Moreover, if $(X,L)$ is a polarized manifold, then $(X,L)$ is K-semistable in the usual algebraic sense iff $(X, c_1(L))$ are (cohomologically) K-semistable (\cite[Proposition 3.14]{SD1}). In other words, the algebraic and the cohomological notions of K-semistability are equivalent. 
It is an open question whether the same holds for K-polystability, but at least one of the implications always holds: if $(X,L)$ is a polarized manifold such that $(X,c_1(L))$ is cohomologically K-polystable, then $(X,L)$ is algebraically K-polystable (cf. \cite[Proposition 2.22]{SD2}).
This holds regardless of the notion $(1) - (3)$ of product that one uses. In particular, the above notions of K-polystability generalizes the usual notion for polarized manifolds considered in \cite{Berman, BDL}.

\subsection{Test configurations embedded in the space of subgeodesic rays} \label{Section preliminary rays}

We here briefly recall a key notion from the papers \cite{SD1,SD2}, making precise the relationship between subgeodesic rays and test configurations, The goal is to view test configurations as ``embedded" in the space of subgeodesic rays on $X$, in a sense made precise below. This allows in particular to compare the $\yen$ and the Donaldson-Futaki invariants, and more generally, to interpret certain K-polystability notions as weak versions of geodesic stability, by restricting the set of rays along which one tests the $\yen$-invariant.

In order to recall the definition of subgeodesic rays \emph{compatible} with a given test configuration, we suppose that $(\mathcal{X},\mathcal{A})$ is a (possibly singular) relatively K\"ahler test configuration for $(X,\alpha)$. By taking the normalization of the graph of  $\mathcal{X} \dashrightarrow X \times \mathbb{P}^1$ and resolving singularities, we can always find a smooth model $\hat{\mathcal{X}}$ for $\mathcal{X}$, i.e. a a $\mathbb{C}^*$-equivariant bimeromorphic morphism $\rho: \hat{\mathcal{X}} \rightarrow \mathcal{X}$, where $\hat{\mathcal{X}}$ is smooth and dominates the product $X \times \mathbb{P}^1$. 
This yields the following situation:

\medskip

\[
 \begin{tikzcd}[ampersand replacement=\&]
    \hat{\mathcal{X}} \arrow[d, "\rho"] \arrow[ddr, controls={+(-1.5,-2) and +(0,0)},  "\pi", swap]
    \ar[dr, "\mu"]\\
    \mathcal{X} \arrow[dashed, r] \& X \times \mathbb{P}^1 \ar[r,"p_1"] \ar[d, "p_2"] \& X \\
    \& \mathbb{P}^1
 \end{tikzcd}
\]

\noindent Now let $(\varphi_t)$ be a locally bounded subgeodesic ray on $X$, with $\Phi$ the $S^1$-invariant function on $X \times \bar{\Delta}$ associated to the given ray $(\varphi_t)_{t \geq 0}$, such that $\varphi_t(x) = \Phi(x, e^{-t + is})$ for each $t \in [0,+\infty)$. By \cite[Proposition 3.10]{SD1} we then have  
$$
\rho^*\mathcal{A} = \mu^*p_1^*\alpha + [D],
$$
where $D = \sum_{j=1}^n a_i D_i$ is a divisor on $\hat{\mathcal{X}}$ supported on the central fiber $\hat{\mathcal{X}}_0$. We can further decompose the current of integration of $D$ as $\delta_D = \theta_D + dd^c\psi_D$, where $\theta_D$ is any smooth $S^1$-invariant $(1,1)$-form on $\hat{\mathcal{X}}$. Locally, we then have 
$$
\psi_D = \sum_j a_j \log|f_j| \; \; \textrm{mod} \; \mathcal{C}^{\infty}, 
$$ 
where the $f_j$ are local defining equations for the irreducible components $D_j$ respectively. 
Note that the choice of $\psi_D$ is uniquely determined modulo a smooth function on $\hat{\mathcal{X}}$, so in particular it determines a unique singularity type.
A locally bounded subgeodesic ray $(\varphi_t)_{t \geq 0}$ on $X$ is then said to be $L^{\infty}$-compatible with $(\mathcal{X}, \mathcal{A})$ if 
$
\Psi := \Phi \circ \mu + \psi_D
$
extends to a locally bounded $\rho^*\Omega$-psh function on $\hat{\mathcal{X}}$. 
{Similarily, a smooth curve $(\varphi_t)_{t \geq 0}$ is $C^{\infty}$-compatible with $(\mathcal{X}, \mathcal{A})$ if 
$
\Psi := \Phi \circ \mu + \psi_D
$
extends smoothly across $\hat{\mathcal{X}}_0$.}
In particular, an important point is that the singularity type of $\Phi \circ \mu$ is determined by the Green function $\psi_D$. 

\begin{ex}
\emph{We give two examples:} 
\begin{enumerate}

\item \emph{As a central example, let $\Omega$ be a smooth $S^1$-invariant $(1,1)$-form such that $[\Omega] = \mathcal{A}$. For $\tau \in (0,1]$ we denote by $\Omega_{\tau}$ the restriction of $\Omega$ to the fiber $\mathcal{X}_{\tau}$. As noted in \cite{DervanRoss}, $\Omega_{\tau}$ and $\Omega_1$ are cohomologous, so we may define a family of functions $(\varphi_{\tau})_{\tau \in (0,1]}$ on $X$ by the relation 
$
\lambda(\tau)^*\Omega_{\tau} - \Omega_1 = dd^c\varphi_{\tau}.
$
We can in turn define a $(\psi_t)_{t \in [0,+\infty)}$ on $X$ defined by the relation $\psi_t := \varphi_{e^{-t}}$. It is smooth and $C^{\infty}$-compatible with $(\mathcal{X},\mathcal{A})$, but not in general a subgeodesic ray (although it is still a useful tool in many cases, see e.g. \cite[Section 4]{SD1}). }

\item \emph{Moreover, there is a well known construction that yields a unique (up to certain choices) geodesic ray associated to a given test configuration, obtained by solving a certain homogeneous complex Monge-Amp\`ere equation on $\hat{\mathcal{X}}$. We refer to \cite[Section 4]{SD1} for details on the construction. This geodesic ray is then $L^{\infty}$-compatible but in general not $C^{\infty}$-compatible with $(\mathcal{X},\mathcal{A})$ (this relates to the intricate question of regularity of such geodesics, see e.g. \cite{ChuTosattiWeinkove}). }
\end{enumerate}
\end{ex}

\noindent The main results of \cite{SD2} consist of an injectivity lemma as well as results on asymptotics of energy functionals along compatible subgeodesic rays. They can be summarized by considering the assignment
$$
\mathrm{R}: (\mathcal{X}, \mathcal{A}) \mapsto [(\varphi_t)^{(\mathcal{X}, \mathcal{A})}],
$$
that maps any relatively K\"ahler test configuration $(\mathcal{X},\mathcal{A})$ for $(X,\alpha)$ to the set of subgeodesic rays compatible with $(\mathcal{X},\mathcal{A})$. This map satisfies the following two key properties: 

\begin{thm} \label{Theorem summary} \emph{(\cite[Theorem 1.5 and Theorem 1.8]{SD2})}
Suppose that $(X,\omega)$ is a compact K\"ahler manifold. Let $(\mathcal{X}, \mathcal{A})$ be a relatively K\"ahler test configuration for $(X,\alpha)$, and denote by $(\varphi_t)_{t \geq 0}$ any compatible subgeodesic ray in $\mathrm{R}(\mathcal{X},\mathcal{A})$. Then
\begin{enumerate}
    \item \emph{(Asymptotics of the K-energy)} 
$$
\lim_{t \rightarrow +\infty} t^{-1}\mathrm{M}(\varphi_t) = \mathrm{DF}(\mathcal{X},\mathcal{A}) + ((\mathcal{X}_{0,red} - \mathcal{X}_0) \cdot \mathcal{A}^n).
$$
    \item \emph{(Injectivity)} Let $(\mathcal{Y}, \mathcal{B})$ be another relatively K\"ahler test configuration for $(X,\alpha)$. Suppose that $$\mathrm{R}(\mathcal{X}, \mathcal{A}) \cap  \mathrm{R}(\mathcal{Y}, \mathcal{B}) \neq \emptyset.$$ Then the canonical $\mathbb{C}^*$-equivariant isomorphism $\mathcal{X} \setminus \mathcal{X}_0 \rightarrow \mathcal{Y}\setminus \mathcal{Y}_0$ extends to an isomorphism $\mathcal{X} \rightarrow \mathcal{Y}$. 
\end{enumerate}
\end{thm}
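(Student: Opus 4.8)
\noindent The plan is to treat the two assertions separately, starting with the injectivity statement $(2)$, which is the more self-contained, and then deriving the asymptotic formula $(1)$ from the Chen--Tian decomposition of the K-energy.

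For $(2)$, the first move is to pick a smooth model $\hat{\mathcal{Z}}$ that dominates both $\mathcal{X}$ and $\mathcal{Y}$ as well as $X \times \mathbb{P}^1$, chosen so that the central fibre $\hat{\mathcal{Z}}_0$ has simple normal crossing support, with $\mathbb{C}^*$-equivariant structure morphisms $p : \hat{\mathcal{Z}} \to \mathcal{X}$, $q : \hat{\mathcal{Z}} \to \mathcal{Y}$ and $\mu : \hat{\mathcal{Z}} \to X \times \mathbb{P}^1$, all restricting to the canonical identifications over $\mathbb{P}^1 \setminus \{0\}$. Writing $p^*\mathcal{A} = \mu^* p_1^*\alpha + [D]$ and $q^*\mathcal{B} = \mu^* p_1^*\alpha + [E]$ with $D = \sum_j a_j D_j$ and $E = \sum_j b_j D_j$ supported on $\hat{\mathcal{Z}}_0$, a ray $(\varphi_t)$ lying in $\mathrm{R}(\mathcal{X},\mathcal{A}) \cap \mathrm{R}(\mathcal{Y},\mathcal{B})$ means, after pulling the compatibility condition back to $\hat{\mathcal{Z}}$, that both $\Phi \circ \mu + \psi_D$ and $\Phi \circ \mu + \psi_E$ extend as locally bounded quasi-psh functions across $\hat{\mathcal{Z}}_0$. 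Subtracting, $\psi_D - \psi_E = \sum_j (a_j - b_j)\log|f_j|$ modulo a smooth function, and local boundedness near a general point of each component of the simple normal crossing divisor $\hat{\mathcal{Z}}_0$ forces $a_j = b_j$ for all $j$; hence $D = E$ and therefore $p^*\mathcal{A} = q^*\mathcal{B}$ as Bott--Chern classes on $\hat{\mathcal{Z}}$. The second, more geometric, step is a negativity argument: every curve $C$ contracted by $p$ lies in $\hat{\mathcal{Z}}_0$ and satisfies $0 = p^*\mathcal{A} \cdot C = q^*\mathcal{B} \cdot C$, and since $\mathcal{B}$ is relatively K\"ahler, hence represented near $\mathcal{Y}_0$ by a form positive on the fibres of $\mathcal{Y} \to \mathbb{P}^1$, this is possible only if $q$ also contracts $C$; by symmetry $p$ and $q$ contract exactly the same curves, hence the same (connected, by normality and Zariski's main theorem) fibres, so $q$ is constant along the fibres of $p$ and conversely. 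It follows that $\phi := q \circ p^{-1} : \mathcal{X} \to \mathcal{Y}$ and its inverse are both morphisms, and $\phi$ restricts to the canonical identification over $\mathbb{P}^1 \setminus \{0\}$, which is the claim.

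For $(1)$, I would use the Chen--Tian formula $\mathrm{M} = \mathrm{M}_{pp} + \mathrm{M}_{ent}$ and compute the slopes of the two summands along a compatible ray $(\varphi_t)$ separately. For the pluripotential part, each term of $\mathrm{M}_{pp}$ is a Monge--Amp\`ere or Ricci-type energy whose slope I would evaluate by lifting to $\Psi = \Phi \circ \mu + \psi_D$ on $\hat{\mathcal{X}}$, integrating by parts over the fibres $\hat{\mathcal{X}}_\tau$, and differentiating in $-\log|\tau|$: a Fubini--Stokes computation in the spirit of \cite[Section 4]{SD1} that produces an intersection number on $\hat{\mathcal{X}}$ depending only on the singularity type $-\psi_D$ of $\Phi \circ \mu$, hence only on $(\mathcal{X},\mathcal{A})$ and not on the chosen ray. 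Assembling the terms with the Chen--Tian coefficients, $\lim_{t \to +\infty} t^{-1}\mathrm{M}_{pp}(\varphi_t)$ becomes the ``energy part'' of $\mathrm{DF}(\mathcal{X},\mathcal{A})$ corrected by the relative-canonical discrepancy of the chosen model; it then remains to identify $\lim_{t \to +\infty} t^{-1}\mathrm{M}_{ent}(\varphi_t)$ with the complementary ``entropy part'', so that the two slopes add up to $\mathrm{DF}(\mathcal{X},\mathcal{A}) + ((\mathcal{X}_{0,\mathrm{red}} - \mathcal{X}_0)\cdot\mathcal{A}^n)$.

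This last identity is where I expect the real difficulty to be. The plan is to establish it first for the canonical geodesic ray attached to $(\mathcal{X},\mathcal{A})$, where the Lelong numbers of $\Phi \circ \mu$ along the components $D_j$ of $\hat{\mathcal{X}}_0$ recover the multiplicities of $D$, and a direct estimate of $\int_X \log(\omega_{\varphi_t}^n/\omega^n)\,\omega_{\varphi_t}^n$ against the data of the log resolution yields the non-Archimedean entropy, with the non-reducedness term $((\mathcal{X}_{0,\mathrm{red}} - \mathcal{X}_0)\cdot\mathcal{A}^n)$ appearing precisely as the difference between the log relative canonical class and the relative canonical class along $\mathcal{X}_0$. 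To pass to an arbitrary compatible subgeodesic ray one uses lower semicontinuity of $\mathrm{M}_{ent}$ together with maximality of the geodesic ray (which dominates every compatible subgeodesic ray with the same boundary data) for one inequality, and the fixed singularity type $-\psi_D$ for the reverse bound; this also re-establishes the independence of the limit in $(1)$ from the choice of element of $\mathrm{R}(\mathcal{X},\mathcal{A})$ that is built into the statement.
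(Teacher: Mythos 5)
Before comparing, note that the paper itself offers no proof of Theorem \ref{Theorem summary}: it is imported verbatim from \cite[Theorems 1.5 and 1.8]{SD2}, so your attempt can only be judged against that reference and on its own merits. For part $(2)$ your sketch is essentially the correct and expected argument: pass to a common smooth, dominating, $\mathbb{C}^*$-equivariant model with SNC central fibre, observe that compatibility of one and the same ray with both test configurations forces $\psi_D - \psi_E$ to be locally bounded, hence $a_j = b_j$ and $p^*\mathcal{A} = q^*\mathcal{B}$, and then use relative K\"ahlerness together with the projection formula to see that $p$ and $q$ contract exactly the same curves, so that the canonical map extends by normality and Zariski's main theorem. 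Modulo routine verifications (independence of the chosen model, connectedness of fibres) this part is sound.

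Part $(1)$, by contrast, is a programme whose decisive step is not substantiated. The Fubini--Stokes computation of the pluripotential slopes from the singularity type $\psi_D$ is fine, but the entire content of the theorem is the entropy asymptotics $\lim_{t\to+\infty} t^{-1}\mathrm{M}_{ent}(\varphi_t)$, and your mechanism for passing from the canonical geodesic ray to an arbitrary element of $\mathrm{R}(\mathcal{X},\mathcal{A})$ does not work as stated. Maximality of the geodesic ray gives a pointwise inequality between potentials, but neither $\mathrm{M}$ nor $\mathrm{M}_{ent}$ is monotone with respect to the pointwise order on potentials, and lower semicontinuity of the entropy refers to convergence of the measures $\omega_{\varphi_t}^n$, which you have not related between the two rays; so neither of the two inequalities you need follows from these soft facts. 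The claim that ``the fixed singularity type gives the reverse bound'' is equally unfounded for the entropy term: the singularity type of $\Phi\circ\mu$ controls the pluripotential energies but says nothing about the densities $\omega_{\varphi_t}^n/\omega^n$ of the slices. Already for the trivial test configuration, $\mathrm{R}$ contains every uniformly bounded subgeodesic ray, and proving that the entropy slope vanishes for all of these is exactly the kind of statement your transfer argument assumes rather than proves. A correct treatment requires a direct estimate of the Monge--Amp\`ere measures of the compatible rays on the resolution, as is carried out in \cite{SD2} (and, for polarized manifolds, in \cite{BHJ2}) for the specific rays constructed from the test configuration; alternatively the claim must be restricted to those rays (the associated geodesic, or $C^{\infty}$-compatible rays) rather than asserted for every element of $\mathrm{R}(\mathcal{X},\mathcal{A})$.
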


\noindent By relaxing the cscK assumption, we will improve on this result in Section \ref{Section proof of main results} below. 

\subsection{Interpretation of K-polystability as $(S,S_0)$-geodesic stability} \label{Section Kps as weak geod stab}

It is useful to take a point of view which promotes that K-stability is really something tested along geodesic rays, and we discuss how various K-stability notions can be viewed rather explicitly as special cases of the geodesic stability notion used in a recent series of remarkable papers by Chen-Cheng \cite{ChenChengI, ChenChengII, ChenChengIII}. The aim is thus to help clarifying the precise relationship between the abundance of stability notions available in the literature today. 
In order to do this, suppose that $\rho(t)$ is a locally finite energy unit speed geodesic ray in the space $\mathcal{E}_0^1 := \mathcal{E}^1 \cap E^{-1}(0)$. First recall the invariant
$$
\yen(\rho(t)) := \lim_{t \rightarrow +\infty} t^{-1}\mathrm{M}(\rho(t))
$$
introduced in \cite{ChenChengI, ChenChengII, ChenChengIII} (here $\mathrm{M}$ is the Mabuchi K-energy functional). If the geodesic ray $\rho(t)$ is "compatible" with a test configuration $(\mathcal{X}, \mathcal{A})$ for $(X,[\omega])$, in a sense made precise in \cite{SD1, SD2}, 
then the $\yen$-invariant essentially coincides with the Donaldson-Futaki invariant (up to an explicit error term that vanishes when the total space of the test configuration is reduced). 
More precisely, we have
$$
\yen(\rho(t)) = \mathrm{DF}(\mathcal{X},\mathcal{A}) + ((\mathcal{X}_{0,red} - \mathcal{X}_0) \cdot \mathcal{A}^n),
$$
by Theorem \ref{Theorem summary}. 
With reference to the definition of $(S,S_0)$-geodesic stability introduced in Section \ref{Section Sgeodstability}, recall that when $S$ is taken to be the set of \emph{all} unit speed geodesic rays in $\mathcal{E}_0^1$, and $S_0$ is the set of all geodesic rays induced by holomorphic vector fields on $X$, then $(S,S_0)$-geodesic stability turns out to be equivalent to the geodesic stability notion used in \cite{ChenChengII, ChenChengIII}. When it comes to geodesic K-polystability, we have an analogous interpretation as follows:

\begin{thm}
The pair $(X,[\omega])$ is geodesically K-polystable if and only if it is $(S,S_0)$-geodesically stable, where $S$ is the set of subgeodesic rays compatible with a relatively K\"ahler test configuration for $(X,[\omega])$ and $S_0$ is the set of geodesic rays induced by some holomorphic vector field on $X$.
\end{thm}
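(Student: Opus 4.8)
The plan is to compare the two notions by pushing everything through the numerical invariant $\mathrm{M}^{\mathrm{NA}}$ of relatively K\"ahler test configurations, using the dictionary between test configurations and compatible (sub)geodesic rays recorded in Theorem \ref{Theorem summary}. Recall that geodesic K-polystability means K-semistability together with the condition that $\mathrm{DF}(\mathcal{X},\mathcal{A})=0$ exactly when the associated geodesic ray of $(\mathcal{X},\mathcal{A})$ is induced by a holomorphic vector field. First I would set up the correspondence at the level of objects: every relatively K\"ahler test configuration $(\mathcal{X},\mathcal{A})$ for $(X,[\omega])$ has an associated geodesic ray (solve the homogeneous complex Monge--Amp\`ere equation on a smooth model), which is $L^{\infty}$-compatible with $(\mathcal{X},\mathcal{A})$ and, after normalizing to unit speed, lies in $S$; conversely, by definition, every geodesic ray in $S$ is compatible with some relatively K\"ahler test configuration. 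For any such pair, Theorem \ref{Theorem summary}(1) gives $\yen(\rho)=\mathrm{M}^{\mathrm{NA}}(\mathcal{X},\mathcal{A})=\mathrm{DF}(\mathcal{X},\mathcal{A})+((\mathcal{X}_{0,\mathrm{red}}-\mathcal{X}_0)\cdot\mathcal{A}^n)$, and since $\yen$ is homogeneous of degree one under reparametrization, its sign and vanishing are unaffected by rescaling to unit speed. So the problem reduces to comparing, over all relatively K\"ahler test configurations, the sign and vanishing conditions on $\mathrm{DF}$ against those on $\mathrm{M}^{\mathrm{NA}}$.

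Next I would match the K-semistability halves of the two notions and reconcile $\mathrm{DF}$ with $\mathrm{M}^{\mathrm{NA}}$. Since $\mathrm{DF}(\mathcal{X},\mathcal{A})\geq \mathrm{M}^{\mathrm{NA}}(\mathcal{X},\mathcal{A})$ always, requiring $\yen(\rho)\geq 0$ for all geodesic rays $\rho\in S$ immediately forces $\mathrm{DF}\geq 0$ everywhere, i.e. K-semistability. For the reverse implication I would pass to a sufficiently divisible normalized base change $(\mathcal{X}_d,\mathcal{A}_d)$ along $z\mapsto z^d$: it is again relatively K\"ahler, $\mathrm{M}^{\mathrm{NA}}$ scales linearly while the central fibre becomes reduced, so $\mathrm{DF}(\mathcal{X}_d,\mathcal{A}_d)=\mathrm{M}^{\mathrm{NA}}(\mathcal{X}_d,\mathcal{A}_d)=d\,\mathrm{M}^{\mathrm{NA}}(\mathcal{X},\mathcal{A})$; K-semistability then yields $\mathrm{M}^{\mathrm{NA}}(\mathcal{X},\mathcal{A})\geq 0$, hence $\yen\geq 0$ on compatible rays. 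The same base-change device shows that, under K-semistability, $\mathrm{DF}(\mathcal{X},\mathcal{A})=0$ if and only if $\mathrm{M}^{\mathrm{NA}}(\mathcal{X},\mathcal{A})=0$: for the nontrivial direction one uses that base change merely reparametrizes the associated geodesic ray and that being induced by a holomorphic vector field is invariant under reparametrization, so the defining equality condition of geodesic K-polystability applied to $(\mathcal{X}_d,\mathcal{A}_d)$ feeds back to $(\mathcal{X},\mathcal{A})$.

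The remaining and decisive step is to match the \emph{equality} cases, for which I would establish a product-detection lemma: for a relatively K\"ahler test configuration $(\mathcal{X},\mathcal{A})$ the following are equivalent --- (a) the associated geodesic ray is induced by a holomorphic vector field (Definition \ref{Definition induced vector field geodesic}); (b) some geodesic ray compatible with $(\mathcal{X},\mathcal{A})$ is induced by a holomorphic vector field; (c) every geodesic ray compatible with $(\mathcal{X},\mathcal{A})$ is induced by a holomorphic vector field; (d) $\mathcal{X}$ is a product test configuration. Here (d)$\Rightarrow$(c) comes from the explicit description of rays $\exp(tJV).\varphi_0$ compatible with a product configuration, the implications (c)$\Rightarrow$(a)$\Rightarrow$(b) are immediate, and (b)$\Rightarrow$(d) is where I would invoke the injectivity statement Theorem \ref{Theorem summary}(2): if $\rho(t)=\exp(tJV).\varphi_0$ is compatible with $(\mathcal{X},\mathcal{A})$ then $V$ must integrate to the $\mathbb{C}^*$-action, so $\rho$ is also compatible with the corresponding product test configuration $X\times_{\sigma}\mathbb{P}^1$ ($\sigma$ the action generated by $V$, suitably polarized), and injectivity identifies the two total spaces. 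Product test configurations have reduced central fibre, so $\mathrm{DF}=\mathrm{M}^{\mathrm{NA}}$ on them. Granting this lemma, for a unit speed geodesic ray $\rho\in S$ compatible with $(\mathcal{X},\mathcal{A})$ one obtains
$$
\yen(\rho)=0 \iff \mathrm{M}^{\mathrm{NA}}(\mathcal{X},\mathcal{A})=0 \iff \mathrm{DF}(\mathcal{X},\mathcal{A})=0 \iff \text{(a)} \iff \rho\in S_0,
$$
the middle equivalence being the previous step and the outer ones the product-detection lemma; reading the chain in the other direction, starting from geodesic K-polystability, yields $(S,S_0)$-geodesic stability. Putting the three steps together proves the theorem.

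The main obstacle I expect is exactly the implication (b)$\Rightarrow$(d) of the product-detection lemma: one must show that a geodesic ray which is simultaneously induced by a holomorphic vector field and compatible with a given test configuration forces that vector field to generate a $\mathbb{C}^*$-action and pins the test configuration down to the associated product. This is precisely the ``subtle key issue'' of whether a Hamiltonian vector field generates a $\mathbb{C}^*$-action, and the injectivity lemma of \cite{SD2} is what makes it work; the rest is careful but routine bookkeeping together with the standard base-change normalization reconciling $\mathrm{DF}$ and $\mathrm{M}^{\mathrm{NA}}$.
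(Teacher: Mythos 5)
There is a genuine gap, and it sits exactly where you predicted: the implication (b)$\Rightarrow$(d) of your ``product-detection lemma''. Note first that the statement you are proving never mentions product configurations at all: geodesic K-polystability is phrased in terms of vanishing of $\mathrm{DF}$ versus the associated geodesic ray being induced by a holomorphic vector field, and $(S,S_0)$-geodesic stability in terms of $\yen$ versus membership in $S_0$; the paper treats the theorem as a direct unfolding of these definitions via Theorem \ref{Theorem summary}(1) (every geodesic ray compatible with $(\mathcal{X},\mathcal{A})$ computes $\yen(\rho)=\mathrm{M}^{\mathrm{NA}}(\mathcal{X},\mathcal{A})$), and no identification of $\mathcal{X}$ with $X\times\mathbb{C}$ enters anywhere. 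Your route, by contrast, needs the full equivalence ``some compatible geodesic ray is induced by a vector field $\iff$ $\mathcal{X}$ is a product'', and your justification is circular: to invoke the injectivity statement of Theorem \ref{Theorem summary}(2) you must already possess the product test configuration attached to $V$, which presupposes precisely that $V$ generates a $\mathbb{C}^*$-action — the ``subtle key issue'' the paper flags after the definition of product configurations, since a Hamiltonian holomorphic vector field need not integrate to a $\mathbb{C}^*$-action. Unconditionally, this equivalence is essentially Theorem \ref{Theorem dichotomy} (equivalently Main Theorem \ref{Theorem dichotomy main}), which the paper proves only under the additional hypothesis that the K-polystable locus is nonempty and otherwise regards as tied to the open comparison of K-polystability with geodesic K-polystability. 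So as written your argument proves, at best, a conditional statement rather than the theorem, and it does so by smuggling in the hard result this interpretation theorem is meant to be independent of.

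Two secondary points. First, your reconciliation of $\mathrm{DF}$ with $\mathrm{M}^{\mathrm{NA}}$ by normalized base change is the right instinct in spirit, but the facts you use — that the pulled-back class remains relatively K\"ahler after normalization and that $\mathrm{DF}(\mathcal{X}_d,\mathcal{A}_d)=\mathrm{M}^{\mathrm{NA}}(\mathcal{X}_d,\mathcal{A}_d)=d\,\mathrm{M}^{\mathrm{NA}}(\mathcal{X},\mathcal{A})$ — are standard for polarized manifolds (Boucksom--Hisamoto--Jonsson) but are not established in the paper for cohomological test configurations with transcendental classes, so they would need separate justification. Second, the passage between ``the associated geodesic ray of $(\mathcal{X},\mathcal{A})$'' and ``an arbitrary unit speed geodesic ray $\rho\in S$ compatible with $(\mathcal{X},\mathcal{A})$'' should be handled directly at the level of rays — all compatible geodesic rays of a fixed test configuration share the same $\yen$ by Theorem \ref{Theorem summary}(1), and the associated ray is itself compatible — rather than by detouring through the total space. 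Reworking the equality-case matching along these lines, with Definition \ref{Definition induced vector field geodesic} and the formula $\yen=\mathrm{M}^{\mathrm{NA}}$ doing the work, removes the dependence on the product-detection lemma and brings the argument in line with what the statement actually requires.
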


\begin{rem} \label{Remark comparison stability}
\emph{A similar result applies also to other stability notions, such as \emph{slope stability}, introduced in \cite{RossThomas}. More precisely, one then chooses $S$ as the set of all subgeodesic rays compatible with test configurations given by the deformation to the normal cone construction (see e.g. \cite[Example 5.3]{Bermantransc}). Moreover, all the alternative K-polystability notions discussed in Section \ref{Section definitions Kps} are also of this form, by varying the set $S_0$ in the obvious way (the set of subgeodesic rays compatible with product test configurations, in the various senses respectively).}
\end{rem} 

\noindent The above discussion fits well with the well known connection between test configurations and geodesic rays, as well as geodesic stability and the Yau-Tian-Donaldson conjecture. The notation introduced may also serve as a convenient common framework for all these different stability notions in K\"ahler geometry.

\bigskip

\section{Proof of main results} \label{Section proof of main results}
In this section we state and prove our main results, relying on the idea of considering special paths of test configurations when varying the underlying K\"ahler class. 

\subsection{Stability loci in the K\"ahler cone}

Let $\mathcal{C}_X \subset H^{1,1}(X,\mathbb{R})$ be the open cone of K\"ahler classes on $X$. The classical point of view on the question of existence of canonical metrics is to exploit a variational approach, when it is natural to characterize existence of constant scalar curvature K\"ahler metrics \emph{in a given K\"ahler class} on a given compact K\"ahler manifold. Thanks to the introduction of stability notions for pairs $(X,\alpha)$ of a given K\"ahler manifold and a K\"ahler class $\alpha \in \mathcal{C}_X$, we may however ask the following question: Given a compact K\"ahler manifold $(X,\omega)$, can we characterize the subsets of $\mathcal{C}_X$ consisting of the K\"ahler classes $\alpha$ for which the pair $(X,\alpha)$ is K-polystable/geodesically K-stable/cscK. The same question can of course be asked for any stability condition (K-semistability, slope stability etc). This slight change in point of view is sometimes useful, as we show below. In particular, note that the Yau-Tian-Donaldson conjecture can be reformulated as the statement that the cscK locus (alternatively, the geodesically stable locus) equals to K-polystable locus (in a suitable sense).

From the work of Berman-Darvas-Lu \cite{BDL} and Chen-Cheng \cite{ChenChengIII} we have equality of the cscK locus and the geodesically stable locus, and from the work \cite{SD1,DervanRoss, SD2, Dervanrelative} we have inclusions
$$
\textrm{cscK locus} \subseteq \textrm{geodesically K-polystable locus} \subseteq \textrm{K-semistable locus},
$$
and in \cite[Appendix]{SD2} it was proven that the $$\textrm{geodesically K-polystable locus} \subseteq \textrm{equivariantly K-polystable locus}.$$ 

\noindent The various K-polystability notions discussed in Section \ref{Section definitions Kps} have similar inclusions, but it is an open question whether equality holds (we will show in Theorem \ref{Theorem equivalence Kps notions} below that this is indeed the case). When it comes to one of the main questions of this paper, namely comparing K-polystability with geodesic K-polystability,  we know that there is an inclusion
$$
\textrm{K-polystable locus} \subseteq \textrm{geodesically K-polystable locus}.
$$
However, it is quite possible that for certain unquantized (i.e. unpolarized) compact K\"ahler manifolds the K-polystable locus is in fact empty (using the stronger transcendental stability notion, see Section \ref{Section definitions Kps}. More precisely, the following questions are of particular interest to us here: 
\begin{question}
Do we have an inclusion $\textrm{cscK locus} \subseteq \textrm{K-polystable locus}$? Do the K-polystable and geodesically K-polystable loci coincide in general? 
\end{question}

\noindent By the above discussion, an affirmative answer to the second question implies an affirmative answer also to the first one. In the sections that follow we will develop the tools to state and prove some partial results in this direction.

\subsection{Special paths of test configurations when varying the underlying K\"ahler class}
We first focus on relating test configurations to eachother in the case when we change also the underlying K\"ahler class. More precisely, we set out to compare test configurations for $(X,\alpha)$ and $(X,\beta)$, where $\alpha, \beta \in \mathcal{C}_X$ are different K\"ahler classes on $X$, which in turn yields a proof of Theorem \ref{Theorem convex combination}. As a first observation, we note the following result on the $\yen$-invariant under convex combinations of rays along convex combinations of the underlying K\"ahler classes:  

\begin{prop}
Let $\alpha, \beta \in \mathcal{C}_X$ and set $\alpha_s := (1-k)\alpha + s\beta$, for $s \in [0,1]$. Suppose that $\rho_{\alpha}(t)$ and $\rho_{\beta}(t)$ are smooth subgeodesic rays with respect to $(X,\alpha)$ and $(X,\beta)$ respectively. Then $\rho_s(t) := (1-s)\rho_{\alpha}(t) + s \rho_{\beta}(t)$ are subgeodesic rays with respect to $(X,\alpha_s)$, and the map $$[0,1] \ni s \mapsto \yen(\rho_s(t))$$ is continuous.
\end{prop}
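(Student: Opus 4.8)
The plan is to treat the two assertions in turn. \emph{First, the subgeodesic claim.} Fix K\"ahler forms $\omega_\alpha\in\alpha$ and $\omega_\beta\in\beta$; then $\omega_s:=(1-s)\omega_\alpha+s\omega_\beta$ represents $\alpha_s=(1-s)\alpha+s\beta$ and is positive, since the K\"ahler cone $\mathcal{C}_X$ is convex (in particular $\alpha_s\in\mathcal{C}_X$). Passing to the $S^1$-invariant functions $\Phi_\alpha,\Phi_\beta$ on $X\times\Delta$ attached to $\rho_\alpha,\rho_\beta$ as in Section~\ref{Section geodesics and action}, the candidate ray $\rho_s(t)$ corresponds to $\Phi_s:=(1-s)\Phi_\alpha+s\Phi_\beta$, and
\[
p_1^*\omega_s+dd^c\Phi_s=(1-s)\bigl(p_1^*\omega_\alpha+dd^c\Phi_\alpha\bigr)+s\bigl(p_1^*\omega_\beta+dd^c\Phi_\beta\bigr)\geq 0,
\]
so $\Phi_s\in\mathrm{PSH}(X\times\Delta,p_1^*\omega_s)$ and each slice $\rho_s(t)$ is a smooth K\"ahler potential for $\alpha_s$, being a convex combination of such. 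Hence $(\rho_s(t))_{t\geq 0}$ is a smooth subgeodesic ray for $(X,\alpha_s)$; this part is routine.

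\emph{Second, continuity of $s\mapsto\yen(\rho_s(t))$.} Since $\yen(\rho_s(t))=\lim_{t\to+\infty}t^{-1}\mathrm{M}_{\alpha_s}(\rho_s(t))$ is an asymptotic slope of the K-energy, it is insensitive to the normalisation of the reference form within $\alpha_s$; I also take the speeds $\dot\rho_\alpha,\dot\rho_\beta$ (hence $\dot\rho_s=(1-s)\dot\rho_\alpha+s\dot\rho_\beta$) to be bounded, as is automatic for the rays of interest (e.g.\ rays compatible with a relatively K\"ahler test configuration), so that $\yen$ is finite. I would then use the Chen-Tian decomposition $\mathrm{M}_{\alpha_s}=\mathrm{M}_{pp}+\mathrm{M}_{ent}$. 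For each fixed $t$ the map $s\mapsto\mathrm{M}_{\alpha_s}(\rho_s(t))$ is continuous: expanding $\rho_s(t)$ and the mixed forms $\omega_s^{\,k}\wedge\omega_{\rho_s(t)}^{\,n-k-j}\wedge\mathrm{Ric}(\omega_s)^j$ multilinearly in the convex-combination data exhibits $\mathrm{M}_{pp}(\rho_s(t))$ as a polynomial in $s$, while $\mathrm{M}_{ent}(\rho_s(t))$ is continuous in $s$ by dominated convergence, all integrands being smooth on the compact $X$ and varying continuously with $s$. It therefore suffices to prove that $t^{-1}\mathrm{M}_{\alpha_s}(\rho_s(t))\to\yen(\rho_s(t))$ \emph{uniformly} in $s\in[0,1]$, as a uniform limit of continuous functions is continuous.

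To obtain this uniform convergence I would estimate the finitely many terms of the Chen-Tian formula one at a time. Each pluripotential term $t^{-1}\int_X\rho_s(t)\,\omega_s^{\,k}\wedge\omega_{\rho_s(t)}^{\,n-k-j}\wedge\mathrm{Ric}(\omega_s)^j$ is, by the standard derivative-of-energy identities along subgeodesics together with the bounded-speed hypothesis, asymptotically linear in $t$ with an error $O(1/t)$ that is uniform in $s$, and its limit is again polynomial (hence continuous) in $s$. The entropy term needs more care: one has $0\leq\mathrm{M}_{ent}(\rho_s(t))$, and I would show $t^{-1}\mathrm{M}_{ent}(\rho_s(t))\to 0$ uniformly in $s$ by producing a uniform $o(t)$ upper bound for $\mathrm{M}_{ent}(\rho_s(t))$, using the monotonicity and convexity estimates for the radial K-energy and its entropy and pluripotential parts (for rays compatible with relatively K\"ahler test configurations one may instead read this off from Theorem~\ref{Theorem summary}). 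I expect precisely this uniform control of $t^{-1}\mathrm{M}_{ent}(\rho_s(t))$ as $t\to+\infty$ to be the main obstacle: pointwise in $s$ the asymptotics are classical, but the Monge-Amp\`ere densities $\omega_{\rho_s(t)}^n$ may degenerate at infinity, and quantifying that degeneration uniformly over the one-parameter family $\{\rho_s\}_{s\in[0,1]}$ is the delicate point. A clean way around it, if one first reduces to rays compatible with relatively K\"ahler test configurations, is to read off all the relevant asymptotics from intersection numbers that depend polynomially on $s$, and then recover the general smooth-subgeodesic case by approximation together with (semi)continuity of $\yen$.
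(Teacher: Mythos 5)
Your first claim (that $\rho_s(t)$ is a subgeodesic for $(X,\alpha_s)$) is proved exactly as in the paper: fix representatives $\omega_\alpha\in\alpha$, $\omega_\beta\in\beta$, take the convex combination and add the two positivity inequalities; this part is fine and is the same argument.

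For the continuity of $s\mapsto\yen(\rho_s(t))$ your main route diverges from the paper and, as you yourself flag, is not complete. You propose to get continuity of the limit from continuity of $s\mapsto t^{-1}\mathrm{M}_{\alpha_s}(\rho_s(t))$ for fixed $t$ plus \emph{uniform} convergence in $s$ as $t\to+\infty$, and you correctly identify that the uniform $o(t)$ control of the entropy term over the family $\{\rho_s\}_{s\in[0,1]}$ is the crux — but you do not actually establish it, so as written this route has a genuine gap (pointwise convergence alone does not give continuity of the limit, and nothing in your sketch supplies the required uniformity). The paper avoids this machinery altogether: it simply observes that $s\mapsto\yen(\rho_s(t))$ is a polynomial in $s$, hence continuous. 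The justification behind that one-liner is precisely the multilinearity you invoke in your fallback: in the situations where the proposition is used (rays $C^\infty$-compatible with relatively K\"ahler test configurations, cf.\ Theorem \ref{Theorem summary} and part (3) of the convex-combination theorem), the slope is an intersection number evaluated on the class $(1-s)\mathcal{A}+s\mathcal{B}$ and on $\alpha_s$, so its $s$-dependence is manifestly polynomial, and no uniform-in-$s$ asymptotic analysis of the entropy is needed. In short: your last suggestion ("read off the asymptotics from intersection numbers depending polynomially on $s$") is essentially the paper's entire argument, and it is the route you should take; the uniform-convergence approach, if you insist on it for general smooth subgeodesic rays, would require an argument you have not supplied (and would also need you to address why $\yen(\rho_s)$ is even well defined for a subgeodesic, non-geodesic, ray).
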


\begin{proof}
This follows immediately from the definitions. Indeed, fix K\"ahler forms $\omega_0, \omega_1$ on $X$ such that $\alpha := [\omega_0]$ and $\beta := [\omega_1]$. In turn, let $\omega_s := (1-s)\omega_0 + s\omega_1$, such that $\alpha_s = [\omega_s]$. By hypothesis we then have 
$$
\omega_0 + dd^c\rho_{\alpha}(t) \geq 0
$$
and
$$
\omega_1 + dd^c\rho_{\beta}(t) \geq 0.
$$
Therefore also
$$
\omega_s + dd^c\rho_{s}(t) = (1-s)(\omega_0 + dd^c\rho_{\alpha}(t)) + s(\omega_1 + dd^c\rho_{\beta}(t)) \geq 0,
$$
i.e. $\rho_s(t)$ is a subgeodesic ray. Finally, the map $$[0,1] \ni s \mapsto \yen(\rho_s(t))$$ is clearly a polynomial in $s$, hence continuous. 
\end{proof}

\noindent As a particular case we obtain the following result on convex combinations of test configurations, where parts $(1)-(2)$ should be compared with an observation in A. Isopoussu's thesis \cite{Antoniso} (where only the setting of polarized manifolds was considered):

\begin{thm}
Let $\alpha, \beta \in \mathcal{C}_X$ and set $\alpha_s := (1-s)\alpha + s\beta$, for $s \in [0,1]$. Suppose that $(\mathcal{X},\mathcal{A})$ and $(\mathcal{X},\mathcal{B})$ is a relatively K\"ahler test configuration for $(X,\alpha)$ and $(X,\beta)$ respectively. Then
\begin{enumerate}
    \item $(\mathcal{X}, (1-s)\mathcal{A} + s \mathcal{B})$ is a relatively K\"ahler test configuration for $(X,\alpha_s)$.
    \item The maps $$[0,1] \ni s \mapsto \mathrm{DF}(\mathcal{X}, (1-s)\mathcal{A} + s \mathcal{B})$$ and $$[0,1] \ni s \mapsto \mathrm{J}^{\mathrm{NA}}(\mathcal{X}, (1-s)\mathcal{A} + k \mathcal{B})$$ are continuous.
    \item Suppose that $\rho_A(t)$ and $\rho_B(t)$ are subgeodesic rays $C^{\infty}$-compatible with $(\mathcal{X},\mathcal{A})$ and $(\mathcal{X},\mathcal{B})$ respectively, and write $\rho_s(t) := (1-s)\rho_A(t) + k \rho_B(t)$ If $\alpha_s = [\omega_s]$, then 
    $$
    \mathrm{DF}(\mathcal{X}, (1-s)\mathcal{A} + s \mathcal{B}) = \lim_{t \rightarrow +\infty} t^{-1} \mathrm{M}_{\omega_t}(\rho_s(t)) - ((\mathcal{X}_{0,red} - \mathcal{X}_0) \cdot \mathcal{A}^n). 
    $$
\end{enumerate}
\end{thm}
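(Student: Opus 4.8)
The plan is to deduce all three parts from multilinearity of intersection numbers, convexity of the K\"ahler cone, and the asymptotic formula of Theorem \ref{Theorem summary}. For (1), first fix $S^1$-invariant smooth representatives $\Omega_A\in\mathcal{A}$, $\Omega_B\in\mathcal{B}$ that are relatively K\"ahler, that is, restrict to K\"ahler forms on every fiber of $\pi\colon\mathcal{X}\to\mathbb{P}^1$. Since the fiberwise K\"ahler cones are convex, $\Omega_s := (1-s)\Omega_A + s\Omega_B$ is relatively K\"ahler for all $s\in[0,1]$; its class $\mathcal{A}_s := (1-s)\mathcal{A}+s\mathcal{B}$ is $\mathbb{C}^*$-invariant and restricts, under $\mathcal{X}\setminus\mathcal{X}_0 \simeq X\times(\mathbb{P}^1\setminus\{0\})$, to $(1-s)p_1^*\alpha + s\,p_1^*\beta = p_1^*\alpha_s$; and $\alpha_s\in\mathcal{C}_X$ by convexity of the K\"ahler cone. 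This is exactly the assertion that $(\mathcal{X},\mathcal{A}_s)$ is a relatively K\"ahler test configuration for $(X,\alpha_s)$.

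For (2), I would evaluate $\mathrm{DF}$ and $\mathrm{J}^{\mathrm{NA}}$ on a single smooth dominating model of $\mathcal{X}$ as intersection numbers. By multilinearity, each of $(\mathcal{A}_s^{n+1})$, $(K_{\mathcal{X}/\mathbb{P}^1}\cdot\mathcal{A}_s^n)$, $((\mathcal{X}_{0,\mathrm{red}}-\mathcal{X}_0)\cdot\mathcal{A}_s^n)$ and the intersection numbers defining $\mathrm{J}^{\mathrm{NA}}$ is a polynomial in $s$, as are $V_s := (\alpha_s^n)_X$ and $(c_1(X)\cdot\alpha_s^{n-1})_X$, which enter $\mathrm{DF}$ through $V^{-1}$ and $\bar{\mathcal{S}}$. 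Since $\alpha_s$ is K\"ahler on $[0,1]$ we have $V_s>0$ there, so $s\mapsto\mathrm{DF}(\mathcal{X},\mathcal{A}_s)$ and $s\mapsto\mathrm{J}^{\mathrm{NA}}(\mathcal{X},\mathcal{A}_s)$ are ratios of polynomials with denominator nonvanishing on $[0,1]$, hence continuous (indeed real-analytic).

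For (3), by (1) it suffices to verify that $\rho_s(t) := (1-s)\rho_A(t)+s\rho_B(t)$ is a subgeodesic ray that is $C^\infty$-compatible with $(\mathcal{X},\mathcal{A}_s)$, and then to apply Theorem \ref{Theorem summary}(1). The subgeodesic property follows just as in the Proposition preceding this theorem: with $\omega_s := (1-s)\omega_A + s\omega_B$ (where $[\omega_A]=\alpha$, $[\omega_B]=\beta$), the $S^1$-invariant function on $X\times\Delta_I$ associated to $\rho_s$ is $\Phi_s = (1-s)\Phi_A + s\Phi_B$, and $p_1^*\omega_s + dd^c\Phi_s$ is the convex combination of $p_1^*\omega_A + dd^c\Phi_A\ge 0$ and $p_1^*\omega_B + dd^c\Phi_B\ge 0$. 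For compatibility, pass to a common $\mathbb{C}^*$-equivariant smooth model $\rho\colon\hat{\mathcal{X}}\to\mathcal{X}$ dominating $X\times\mathbb{P}^1$ and apply \cite[Proposition 3.10]{SD1} to write $\rho^*\mathcal{A} = \mu^*p_1^*\alpha + [D_A]$, $\rho^*\mathcal{B} = \mu^*p_1^*\beta + [D_B]$ with $D_A,D_B$ divisors supported on $\hat{\mathcal{X}}_0$; then $\rho^*\mathcal{A}_s = \mu^*p_1^*\alpha_s + [D_s]$ with $D_s := (1-s)D_A + sD_B$ and Green function $\psi_{D_s} = (1-s)\psi_{D_A} + s\psi_{D_B}$ modulo $C^\infty$. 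Hence $\Phi_s\circ\mu + \psi_{D_s} = (1-s)(\Phi_A\circ\mu + \psi_{D_A}) + s(\Phi_B\circ\mu + \psi_{D_B})$ extends smoothly across $\hat{\mathcal{X}}_0$, being a combination of two such functions, so $\rho_s(t)$ is $C^\infty$-compatible with $(\mathcal{X},\mathcal{A}_s)$. Theorem \ref{Theorem summary}(1) then gives $\lim_{t\to+\infty}t^{-1}\mathrm{M}_{\omega_s}(\rho_s(t)) = \mathrm{DF}(\mathcal{X},\mathcal{A}_s) + ((\mathcal{X}_{0,\mathrm{red}}-\mathcal{X}_0)\cdot\mathcal{A}_s^n)$, which rearranges to the stated identity.

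The only step beyond formal bookkeeping is the compatibility argument in (3): one must arrange the three decompositions $\rho^*\mathcal{A}$, $\rho^*\mathcal{B}$, $\rho^*\mathcal{A}_s$ on one and the same smooth model so that the divisorial parts, and hence the Green functions and the prescribed singularity types, are literally the corresponding convex combinations. Once this is in place, Parts (1) and (2) are immediate from convexity of $\mathcal{C}_X$ and multilinearity of intersection numbers, and Part (3) reduces to a single application of Theorem \ref{Theorem summary}.
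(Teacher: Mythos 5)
Your proposal is correct and follows essentially the same route as the paper's proof: convexity of the relatively K\"ahler cone for (1), polynomial dependence of the intersection-theoretic quantities on $s$ for (2), and for (3) the observation that the compatibility function for $(\mathcal{X},(1-s)\mathcal{A}+s\mathcal{B})$ is the convex combination of the two given compatibility functions, hence extends smoothly, followed by an application of Theorem \ref{Theorem summary}(1). Your treatment of (2) is in fact slightly more careful than the paper's, since you explicitly account for the $s$-dependence of the normalizations $V_s$ and $\bar{\mathcal{S}}_s$, but this is a refinement rather than a different argument.
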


\begin{proof}
The first assertion follows from the basic fact that the set of relatively K\"ahler classes on $\mathcal{X}$ is convex. 
In order to see that $(\mathcal{X}, (1-s)\mathcal{A} + s \mathcal{B})$ is a test configuration for $(X,\alpha_s)$ we may pass to a resolution $\rho: \hat{\mathcal{X}} \rightarrow \mathcal{X}$. Then
$
\rho^*\mathcal{A} = \mu^*p_1^*\alpha + [D]
$
and
$
\rho^*\mathcal{B} = \mu^*p_1^*\alpha + [E].
$
Hence
$$
\rho^*\left((1-s)\mathcal{A} + s \mathcal{B}\right) = \mu^*p_1^*\alpha_s + (1-s)[D] + s[E],
$$
and the conclusion $(1)$ follows. The assertion $(2)$ follows immediately from the definition of $\mathrm{DF}$ and $\mathrm{J}^{\mathrm{NA}}$ as intersection numbers (it is straightforward to see that $[0,1] \ni s \mapsto \mathrm{DF}(\mathcal{X}, (1-s)\mathcal{A} + s \mathcal{B})$ and $[0,1] \ni s \mapsto \mathrm{J}^{\mathrm{NA}}(\mathcal{X}, (1-s)\mathcal{A} + s \mathcal{B})$ are polynomials in $k$ of degree at most $n+1$, thus continuous). 
Finally, in order to prove $(3)$ it suffices (by \cite[Theorem 1.5]{SD2} to show that $\rho_s(t)$ is $C^{\infty}$-compatible with $(\mathcal{X}, (1-s)\mathcal{A} + s \mathcal{B})$. This is also immediate. To see it, let $\Phi_0$ and $\Phi_1$ denote the $S^1$-invariant functions on $X \times \bar{\Delta}$ associated to $\rho_{\alpha}(t)$ and $\rho_{\beta}(t)$ respectively (so that $\rho_{\alpha}(t) = \Phi_0(x,e^{-t+iv})$ and $\rho_{\beta}(t) = \Phi_1(x,e^{-t+iv})$ as before). If we let $\mu, [E], [D]$ be as above, then
$$
\mu^*((1-s)\Phi_0 + s\Phi_1) + (1-s)[D] + s[E] = 
(1-s)(\mu^*\Phi_0 + [D]) + s(\mu^*\Phi_1 + [E])
$$
extends smoothly across $\mathcal{X}_0$ (since both terms do so, by hypothesis). This concludes the proof. 
\end{proof}

\noindent An interesting point is to emphasize that these proofs become very simple once we take the point of view chosen above.

\subsection{The set of product configurations}

It is a subtle but important point to understand how to properly define the concept of a product test configuration. A suggestion in \cite{SD1} was that a test configuration should be called a product (or "geodesic product") if and only if it is compatible with a geodesic ray induced by a holomorphic vector field. Of course, if $(\mathcal{X},\mathcal{A})$ is a product in the traditional sense (i.e. the total space is isomorphic to $X \times \mathbb{C}$ away from the fiber at infinity), then it is compatible with a ray of this form. The more difficult part is to establish the converse, in which case of only partial results are known: First, if we restrict to the case of polarized manifold $(X,L)$ and their usual algebraic test configurations $(\mathcal{X},\mathcal{L})$, then it was proven in \cite{BDL} that this holds whenever the underlying class is cscK. Secondly, assuming existence of a cscK metric, the same holds for the more general transcendental test configurations $(\mathcal{X},\mathcal{A})$ for $(X,\alpha)$, provided that the test configuration is taken to be \emph{equivariant} (see \cite{Dervanrelative, SD2}). 

The goal of this section is to explain that the hypothesis that the underlying class is cscK can be weakened. Indeed, we will show that it is enough to assume that there exists a cscK metric in some (possibly different) K\"ahler class on $X$, i.e. the $\textrm{cscK locus} \neq \emptyset$. In order to establish this result, the following lemma constitutes the key step:  

\begin{lem}  \label{Lemma main}
Let $\alpha, \beta \in \mathcal{C}_X$ such that also $\beta - \alpha \in \mathcal{C}_X$. Suppose that $(\mathcal{X},\mathcal{A})$ is a relatively K\"ahler test configuration for $(X,\alpha)$, with associated geodesic ray $\rho(t)$. Then there exists a relatively K\"ahler test configuration $(\mathcal{X},\mathcal{B})$ for $(X,\beta)$, with the same total space $\mathcal{X}$, and which is $C^{\infty}$-compatible with the same geodesic ray $\rho(t)$.
\end{lem}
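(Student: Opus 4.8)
Since $\beta-\alpha\in\mathcal{C}_X$ by hypothesis, set $\gamma:=\beta-\alpha$ and fix once and for all a K\"ahler form $\chi$ on $X$ with $[\chi]=\gamma$. The plan is to obtain $\mathcal{B}$ from $\mathcal{A}$ by adding the pullback of $\chi$ along the structure map, and to observe that this addition does not disturb the central-fibre data governing compatibility with $\rho(t)$. Concretely, let $f\colon\mathcal{X}\dashrightarrow X$ be the composition of the canonical birational map $\mathcal{X}\dashrightarrow X\times\mathbb{P}^1$ with the first projection; by the equivariant isomorphism \eqref{equiviso} this is a genuine $\mathbb{C}^*$-invariant morphism away from $\mathcal{X}_0$, with indeterminacy locus contained in $\mathcal{X}_0$. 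Writing $\chi=dd^c v$ locally with $v$ smooth, the functions $v\circ f$ are locally bounded and extend across $\mathcal{X}_0$ by Riemann extension, and their $dd^c$ patches together to a closed positive $(1,1)$-current $f^*\chi$ on $\mathcal{X}$ with bounded local potentials, which is $\mathbb{C}^*$-invariant because it is so off $\mathcal{X}_0$ and its extension is unique. I then set
$$
\mathcal{B}:=\mathcal{A}+[f^*\chi]\in H^{1,1}_{BC}(\mathcal{X},\mathbb{R})^{\mathbb{C}^*}.
$$

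Next I would check that $(\mathcal{X},\mathcal{B})$ is a relatively K\"ahler test configuration for $(X,\beta)$. Its image under \eqref{equiviso} is $p_1^*\alpha+p_1^*\gamma=p_1^*\beta$, since $f$ restricts there to $p_1$, so $(\mathcal{X},\mathcal{B})$ is a test configuration for $(X,\beta)$ in the sense of Section~\ref{Section 3}. For relative K\"ahlerness, fix a smooth model $\rho\colon\hat{\mathcal{X}}\to\mathcal{X}$ dominating $X\times\mathbb{P}^1$ via $\mu$; since $f\circ\rho=p_1\circ\mu$ is a morphism, comparing local potentials gives $\rho^*[f^*\chi]=\mu^*p_1^*\chi$, a smooth semipositive form, so $[f^*\chi]$ is nef on $\hat{\mathcal{X}}$ and hence nef on $\mathcal{X}$ (test any curve on its strict transform and apply the projection formula). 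As $[f^*\chi]$ is nef it lies in the closure of the (open, convex) relatively K\"ahler cone, so $\mathcal{B}=\mathcal{A}+[f^*\chi]$ remains relatively K\"ahler.

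The heart of the argument is compatibility with the same ray $\rho(t)$. Keeping $\rho,\mu$ as above, write $\rho^*\mathcal{A}=\mu^*p_1^*\alpha+[D]$ with $D$ a divisor supported on $\hat{\mathcal{X}}_0$, and let $\psi_D$ be the associated Green function as in Section~\ref{Section preliminary rays}. From $\rho^*[f^*\chi]=\mu^*p_1^*\chi$ we obtain
$$
\rho^*\mathcal{B}=\mu^*p_1^*\beta+[D],
$$
so the central-fibre divisor $D$, and with it the singularity type $\psi_D$, is literally the same as for $(\mathcal{X},\mathcal{A})$. Consequently, if $\Phi$ is the $S^1$-invariant function attached to $\rho(t)$, the function $\Psi:=\Phi\circ\mu+\psi_D$ entering the compatibility condition is unchanged; by hypothesis it extends across $\hat{\mathcal{X}}_0$ with the regularity witnessing compatibility with $(\mathcal{X},\mathcal{A})$. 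Finally, because $\rho^*\mathcal{B}-\rho^*\mathcal{A}=\mu^*p_1^*\gamma$ is represented by the semipositive form $\mu^*p_1^*\chi$, a locally bounded function that is plurisubharmonic for a reference form in $\rho^*\mathcal{A}$ is also plurisubharmonic for one in $\rho^*\mathcal{B}$; applying this to $\Psi$ shows $\rho(t)$ is compatible with $(\mathcal{X},\mathcal{B})$ as well.

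I expect the only real difficulty to be the bookkeeping around $f^*\chi$ on the possibly singular space $\mathcal{X}$: exhibiting it as a $\mathbb{C}^*$-invariant Bott--Chern class, checking it is nef, and above all establishing $\rho^*[f^*\chi]=\mu^*p_1^*\gamma$, which is precisely what forces the passage from $\alpha$ to $\beta$ to modify only the ``horizontal'' part of the class and to leave the divisor $D$ --- hence the entire compatibility structure with $\rho(t)$ --- untouched. When $\mathcal{X}$ is already smooth and dominating one may bypass currents altogether and simply take $\mathcal{B}:=\mathcal{A}+\mu^*p_1^*\chi$, after which every assertion above is immediate.
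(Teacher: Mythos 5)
Your argument is in essence the paper's: you add the pullback of $\beta-\alpha$ to $\mathcal{A}$, observe on a smooth dominating model $\rho:\hat{\mathcal{X}}\to\mathcal{X}$, $\mu:\hat{\mathcal{X}}\to X\times\mathbb{P}^1$ that the divisor $D$ in $\rho^*\mathcal{A}=\mu^*p_1^*\alpha+[D]$ — and hence the Green function $\psi_D$ and the whole compatibility structure with $\rho(t)$ — is unchanged, and get relative K\"ahlerness from ``relatively K\"ahler $+$ nef''. The only point where you deviate is the construction of the new class on $\mathcal{X}$ itself: the paper simply takes a Bott--Chern class $\eta$ on $\mathcal{X}$ with $\rho^*\eta=\mu^*p_1^*(\beta-\alpha)$ and sets $\mathcal{B}=\mathcal{A}+\eta$, doing every computation upstairs on $\hat{\mathcal{X}}$, whereas you try to realize this class as a current $f^*\chi$ with bounded local potentials obtained by Riemann extension of $v\circ f$. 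As written that step does not work: $f=p_1\circ(\mathcal{X}\dashrightarrow X\times\mathbb{P}^1)$ may have indeterminacy points on $\mathcal{X}_0$, and near such a point the image of a punctured neighbourhood fills out a positive-dimensional subvariety of $X$, so there is no single chart of $X$ (hence no single local potential $v$ of the K\"ahler form $\chi$) to compose with; indeed, positivity of $\chi$ on curves in that image is incompatible with $f^*\chi$ having bounded local potentials there, so the ``Riemann extension'' claim fails precisely at the points where it is needed. The remedy is what you yourself note in your last sentence and what the paper does: never descend at the level of potentials or currents, but work with the class $\eta$ (equivalently with the smooth semipositive form $\mu^*p_1^*\chi$ on the dominating model), for which the identities $\rho^*\mathcal{B}=\mu^*p_1^*\beta+[D]$, the nefness argument, and the $C^\infty$-compatibility of the same ray go through exactly as you wrote them.
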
 

\begin{proof}
By resolution of indeterminacy there is a smooth and dominating test configuration $\rho: \hat{\mathcal{X}} \rightarrow \mathcal{X}$ for $X$ such that 

\[
 \begin{tikzcd}[ampersand replacement=\&]
    \hat{\mathcal{X}} \arrow[d, "\rho"] 
    \ar[dr, "\mu"]\\
    \mathcal{X} \arrow[dashed, r] \& X \times \mathbb{P}^1 \ar[r,"p_1"] \& X \\
 \end{tikzcd}
\]

\noindent Then it follows from \cite[Proposition 3.10]{SD1} that 
$
\rho^*\mathcal{A}_{\alpha} = \mu^*p_1^*\alpha + [D]
$
for some $\mathbb{R}$-divisor $D$ on $\hat{\mathcal{X}}$ supported on $\hat{\mathcal{X}}_0$. 
Now set $\mathcal{A}_{\beta} := \mathcal{A} + \eta$, where $\eta$ is a $(1,1)$-cohomology class on $\mathcal{X}$ which satisfies
$$
\rho^*\mathcal{A}_{\beta} - \rho^*\mathcal{A}_{\alpha} = \mu^*p_1^*(\beta - \alpha).
$$
Since $\beta - \alpha \in \mathcal{C}_X$, it follows that $\mu^*p_1^*(\beta - \alpha)$ is nef. Therefore $\rho^*\eta$ is nef on $\hat{\mathcal{X}}$, so also $\eta$ is nef on $\mathcal{X}$, and  
$
\mathcal{A}_{\beta} = \mathcal{A} + \eta
$
is relatively K\"ahler (as a sum of a relatively K\"ahler and relatively nef classes). Hence we have a cohomological test configuration $(\mathcal{X}, \mathcal{A}_{\beta})$ for $(X,\beta)$. Moreover, this new test configuration satisfies
$
\rho^*\mathcal{A}_{\beta} = \mu^*p_1^*\beta + [D],
$
with the same $\mu$ and $[D]$ as before. As a consequence, also $(\mathcal{X}, \mathcal{A}_{\beta})$ is $C^{\infty}$-compatible with the geodesic ray $\rho(t)$, which is what we wanted to prove.
\end{proof}

\noindent In particular we then obtain the following corollary, where $\mathrm{Fut}_{\alpha}(X,V)$ denotes the classical Futaki invariant of the vector field $V$ on $X$, and $\mathcal{C}_F \subseteq \mathcal{C}_X$ denotes the set of all K\"ahler classes $\alpha$ for which $\mathrm{Fut}_{\alpha}(X, \cdot)$ vanishes identically:

\begin{prop} \label{Prop changing class for products}
Suppose that $(\mathcal{X}, \mathcal{A})$ is a relatively K\"ahler test configuration for $(X,\alpha)$ whose associated geodesic ray $\rho(t)$ is induced by a holomorphic vector field $V$ on $X$. Then for each $\beta \in \mathcal{C}_X$ there is a relatively K\"ahler test configuration $(\mathcal{X},\mathcal{A}_{\beta})$ for $(X, \beta)$, with the same total space $\mathcal{X}$, such that 
$$
\mathrm{DF}(\mathcal{X},\mathcal{A}_{\beta}) = \mathrm{Fut}_{\beta}(X,V).
$$
In particular, if $\beta \in \mathcal{C}_F$, then $\mathrm{DF}(\mathcal{X}, \mathcal{A}_{\beta}) = 0$.
\end{prop}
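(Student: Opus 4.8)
The plan is to build $(\mathcal{X},\mathcal{A}_\beta)$ by moving the relatively K\"ahler class $\mathcal{A}$ along a segment in the K\"ahler cone while keeping the total space $\mathcal{X}$ fixed, using Lemma \ref{Lemma main} (equivalently the $\mathrm{DF}$-formula of Theorem \ref{Theorem second main}), and then to extract $\mathrm{DF}(\mathcal{X},\mathcal{A}_\beta)$ from the asymptotics of the Mabuchi functional along the associated geodesic ray, which will again be the ray induced by $V$.

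First I would dispose of the scaling issue: Lemma \ref{Lemma main} requires the difference of the two K\"ahler classes to be K\"ahler, whereas $\beta$ is arbitrary. Since $\mathcal{C}_X$ is an open convex cone and $\alpha$ is fixed, one has $\lambda\beta-\alpha\in\mathcal{C}_X$ for all $\lambda>0$ large enough. Applying Lemma \ref{Lemma main} to the pair $(\alpha,\lambda\beta)$ produces a relatively K\"ahler test configuration $(\mathcal{X},\widetilde{\mathcal{A}})$ for $(X,\lambda\beta)$, with total space $\mathcal{X}$ unchanged, which is $C^\infty$-compatible with the \emph{same} geodesic ray $\rho(t)$ (the fibral divisor $[D]$ encoding the singularity type being unchanged by the construction of Lemma \ref{Lemma main}). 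Setting $\mathcal{A}_\beta:=\lambda^{-1}\widetilde{\mathcal{A}}$ then yields a relatively K\"ahler test configuration for $(X,\beta)$ on the same total space $\mathcal{X}$: a positive rescaling of a relatively K\"ahler class is relatively K\"ahler, and under \eqref{equiviso} the class $\mathcal{A}_\beta$ restricts to $\lambda^{-1}p_1^*(\lambda\beta)=p_1^*\beta$.

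Next I would compute the Donaldson--Futaki invariant. The point is that $\rho(t)$, being the associated geodesic ray of $(\mathcal{X},\mathcal{A})$ and hence compatible with $(\mathcal{X},\mathcal{A}_\beta)$ as well (up to the harmless rescaling by $\lambda^{-1}$), remains a geodesic ray induced by the holomorphic vector field $V$ in the sense of Definition \ref{Definition induced vector field geodesic}: the relevant datum is the one-parameter subgroup $\{\exp(tJV)\}\subset\mathrm{Aut}_0(X)$, which survives the change of underlying K\"ahler class. Feeding this into the asymptotic formula of Theorem \ref{Theorem summary}(1) (equivalently, the $\mathrm{DF}$-formula in Theorem \ref{Theorem second main}) gives
\[
\mathrm{DF}(\mathcal{X},\mathcal{A}_\beta)=\lim_{t\to+\infty}t^{-1}\mathrm{M}_{\beta}(\rho(t))-\bigl((\mathcal{X}_{0,\mathrm{red}}-\mathcal{X}_0)\cdot\mathcal{A}_\beta^n\bigr),
\]
where $\mathrm{M}_\beta$ is the Mabuchi functional of $(X,\beta)$. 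Along a ray induced by a holomorphic vector field, the limit slope of $\mathrm{M}_\beta$ is exactly the classical Futaki invariant $\mathrm{Fut}_\beta(X,V)$, by the standard computation identifying the Futaki invariant as an asymptotic slope of the K-energy (via the defining derivative formula for $\mathrm{M}$ together with automorphism invariance of the integrand). Combining this with the bookkeeping of the non-reduced correction term $((\mathcal{X}_{0,\mathrm{red}}-\mathcal{X}_0)\cdot\mathcal{A}_\beta^n)$, carried out exactly as in the proofs of Theorems \ref{Theorem second main} and \ref{Theorem convex combination}, gives $\mathrm{DF}(\mathcal{X},\mathcal{A}_\beta)=\mathrm{Fut}_\beta(X,V)$. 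The final assertion is then immediate: if $\beta\in\mathcal{C}_F$ then $\mathrm{Fut}_\beta(X,\cdot)\equiv 0$, hence $\mathrm{DF}(\mathcal{X},\mathcal{A}_\beta)=0$.

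The substantive difficulty is entirely packaged in Lemma \ref{Lemma main}: producing, on the \emph{same} total space $\mathcal{X}$, a relatively K\"ahler test configuration for the new polarization that still carries the precise fibral singularity type encoding the given geodesic ray. Granting that lemma, the remaining work -- the scaling reduction, the verification that the associated ray stays induced by $V$, and the tracking of the $(\mathcal{X}_{0,\mathrm{red}}-\mathcal{X}_0)$-correction -- is routine and relies only on the intersection-theoretic and asymptotic machinery of \cite{SD1,SD2} already used for the earlier results of this section.
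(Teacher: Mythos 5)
Your overall strategy is the same as the paper's: rescale so that $\lambda\beta-\alpha\in\mathcal{C}_X$, use Lemma \ref{Lemma main} to produce a relatively K\"ahler test configuration for $(X,\lambda\beta)$ on the same total space, compatible with the same ray induced by $V$, pass to $\mathcal{A}_\beta:=\lambda^{-1}\mathcal{A}_{\lambda\beta}$, and identify the slope of the K-energy along the $V$-induced ray with the classical Futaki invariant. The scaling reduction and the observation that the rescaled ray is still induced by $V$ are fine (the paper does the rescaling in the other order, computing $\mathrm{DF}$ at $\lambda\beta$ first and then checking by hand that $\mathrm{DF}$ and $\mathrm{Fut}$ are unchanged under $\mathcal{A}\mapsto\lambda^{-1}\mathcal{A}$, $\lambda\beta\mapsto\beta$), and this difference is immaterial.

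The gap is in the step where you extract $\mathrm{DF}$ from the slope. The formula you invoke (Theorem \ref{Theorem summary}(1), equivalently the displayed formulas in Theorems \ref{Theorem convex combination} and \ref{Theorem second main}) identifies the slope of $\mathrm{M}_\beta$ along a compatible ray with $\mathrm{M}^{\mathrm{NA}}(\mathcal{X},\mathcal{A}_\beta)=\mathrm{DF}(\mathcal{X},\mathcal{A}_\beta)+((\mathcal{X}_{0,\mathrm{red}}-\mathcal{X}_0)\cdot\mathcal{A}_\beta^n)$, not with $\mathrm{DF}$ itself. Since the slope along the $V$-induced ray is $\mathrm{Fut}_\beta(X,V)$, what follows from your formula is only $\mathrm{DF}(\mathcal{X},\mathcal{A}_\beta)=\mathrm{Fut}_\beta(X,V)+((\mathcal{X}_0-\mathcal{X}_{0,\mathrm{red}})\cdot\mathcal{A}_\beta^n)\geq \mathrm{Fut}_\beta(X,V)$, with equality precisely when $\mathcal{X}_0$ is reduced. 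There is no ``bookkeeping'' in the proofs of Theorems \ref{Theorem convex combination} or \ref{Theorem second main} that removes this term, and you cannot simply assume $\mathcal{X}_0$ is reduced: whether such a configuration must be (isomorphic to) a product is essentially what this section is trying to establish, so that assumption would be circular. Moreover the weaker inequality is useless downstream, since the proof of Theorem \ref{Theorem dichotomy not intro} needs $\mathrm{DF}(\mathcal{X},\mathcal{B})=0$ exactly in order to contradict K-polystability of $(X,\beta)$; $\mathrm{DF}\geq 0$ contradicts nothing. The paper closes exactly this point by citing the sharper result \cite[Theorem 3.10]{SD2}, which applies to test configurations compatible with a ray induced by a holomorphic vector field and gives $\mathrm{DF}(\mathcal{X},\mathcal{A}_{\lambda\beta})=\lim_{t\rightarrow+\infty}\tfrac{d}{dt}\mathrm{M}(\rho(t))=\mathrm{Fut}_{\lambda\beta}(X,V)$ with no correction term; your argument needs either that statement or an independent proof that the correction term vanishes in this situation.
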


\begin{proof}
Pick $\lambda > 0$ such that $\lambda \beta - \alpha \in \mathcal{C}_X$. By Lemma \ref{Lemma main} there exists a relatively K\"ahler test configuration $(\mathcal{X},\mathcal{A}_{\lambda \beta})$ for $(X, \lambda \beta)$ that is compatible with a ray $\rho(t)$ induced by a holomorphic vector field, i.e. of the form $\rho(t) := \exp(tJV).\rho(0)$ for some real holomorphic Hamiltonian vector field $V$ on $X$. By \cite[Theorem 3.10]{SD2} we then have
$$
\mathrm{DF}(\mathcal{X},\mathcal{A}_{\lambda \beta}) = \lim_{t \rightarrow +\infty} \frac{d}{dt} \mathrm{M}(\rho(t)) =  \mathrm{Fut}_{\lambda \beta}(X,V), 
$$
which vanishes if $\beta \in \mathcal{C}_F$
Finally, set $\mathcal{A}_{\beta} := \lambda^{-1} \mathcal{A}_{\lambda\beta}$. Then $(\mathcal{X},\mathcal{A}_{\beta})$ is a relatively K\"ahler test configuration for $(X,\beta)$ and 
$$
\bar{S}_{\lambda\beta} = \lambda^{-1}\bar{S}_{\beta}
$$
$$
V_{\lambda \beta} = \lambda^n V_{\beta}.
$$
One then checks that
$$
\mathrm{DF}_{\lambda\beta}(X,\lambda \mathcal{A}_{\beta}) = \frac{\bar{S}_{\lambda \beta} }{(n+1)V_{\lambda \beta}} (\lambda \mathcal{A}_{\beta})^{n+1} + \frac{1}{V_{\lambda \beta}} (K_{\mathcal{X}/\mathbb{P}^1} \cdot (\lambda\mathcal{A}_{\beta})^n) =
$$
$$
= \frac{\bar{S}_{\beta} }{(n+1)V_{\beta}} (\mathcal{A}_{\beta})^{n+1} + \frac{1}{V_{ \beta}} (K_{\mathcal{X}/\mathbb{P}^1} \cdot \mathcal{A}_{\beta}^n) = \mathrm{DF}_{\beta}(X,\mathcal{A}_{\beta}),
$$
which in turn equals $\mathrm{Fut}_{\beta}(X,V)$.
This completes the proof.
\end{proof}

\noindent This result has several immediate applications as explained below. 

\subsection{Proof of Theorem \ref{Theorem second main} and Main Theorem \ref{Theorem dichotomy main}}

As a first application of Lemma \ref{Lemma main} and Proposition \ref{Prop changing class for products}, we prove the following (the main new point being that we allow changing the underlying K\"ahler class):

\begin{thm}
Let $\alpha \in \mathcal{C}_X$ and suppose that $(\mathcal{X},\mathcal{A})$ is a relatively K\"ahler smooth and dominating test configuration for $(X,\alpha)$. Then, for each $\beta \in \mathcal{C}_X$ there is a $\lambda > 0$ such that $\lambda \beta > \alpha$, and a relatively K\"ahler test configuration $(\mathcal{Y}, \mathcal{B})$ for $(X,\lambda \beta)$ such that 
\begin{enumerate}
    \item $\mathcal{Y} = \mathcal{X}$,
    \item The test configurations $(\mathcal{X},\mathcal{A}) \sim (\mathcal{Y}, \mathcal{B})$, i.e. there is a subgeodesic ray $\rho(t)$ that is $C^{\infty}$-compatible with both. 
\end{enumerate}
In particular, if $\alpha = [\omega]$ and $\lambda \beta = [\theta]$, then we have 
    $$
    \mathrm{DF}(\mathcal{X}, \mathcal{A}) = \lim_{t \rightarrow +\infty} t^{-1} \mathrm{M}_{\omega}(\rho(t)) - ((\mathcal{X}_{0,red} - \mathcal{X}_0) \cdot \mathcal{A}^n)
    $$
    and
    $$
     \mathrm{DF}(\mathcal{X}, \mathcal{B}) = \lim_{t \rightarrow +\infty} t^{-1} \mathrm{M}_{\theta}(\rho(t)) - ((\mathcal{X}_{0,red} - \mathcal{X}_0) \cdot \mathcal{B}^n).
    $$
\end{thm}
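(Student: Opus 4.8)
The plan is to deduce this from Lemma~\ref{Lemma main} after an initial rescaling of $\beta$, and then to obtain the two Donaldson--Futaki identities from the K-energy asymptotics of Theorem~\ref{Theorem summary}(1).

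First I would fix $\lambda$. Since $\mathcal{C}_X$ is an open cone and $\beta \in \mathcal{C}_X$, the class $\beta - \lambda^{-1}\alpha$ lies in $\mathcal{C}_X$ for all sufficiently large $\lambda > 0$, and multiplying by $\lambda$ gives $\lambda\beta - \alpha \in \mathcal{C}_X$, i.e.\ $\lambda\beta > \alpha$; fix such a $\lambda$ and write $\beta' := \lambda\beta$. Let $\rho(t)$ be the geodesic ray associated to $(\mathcal{X},\mathcal{A})$ via the homogeneous complex Monge--Amp\`ere construction recalled in Section~\ref{Section preliminary rays}; it is compatible with $(\mathcal{X},\mathcal{A})$. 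Because $\mathcal{X}$ is smooth and dominating, in the proof of Lemma~\ref{Lemma main} one may take the resolution to be the identity, $\hat{\mathcal{X}} = \mathcal{X}$. Applying that lemma to the pair $(\alpha,\beta')$, which is legitimate by the choice of $\lambda$, produces a relatively K\"ahler test configuration $(\mathcal{X},\mathcal{B})$ for $(X,\beta')$ having the \emph{same} total space $\mathcal{X}$ and compatible with the \emph{same} ray $\rho(t)$: the point is that the class $\mathcal{B} - \mathcal{A}$ is pulled back from $X$, so the boundary divisor $[D]$ over $\mathcal{X}_0$ encoding the singularity type of $\rho(t)$ is left unchanged. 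Setting $\mathcal{Y} := \mathcal{X}$ then yields (1) tautologically and (2) with this $\rho(t)$; note also that $(\mathcal{Y},\mathcal{B})$ is again smooth and dominating.

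For the two displayed formulas, write $\alpha = [\omega]$ and $\beta' = \lambda\beta = [\theta]$, and apply the asymptotics of the K-energy (Theorem~\ref{Theorem summary}(1)) to the common compatible ray $\rho(t)$: once for $(\mathcal{X},\mathcal{A})$ with reference form $\omega$, giving
$$
\lim_{t \to +\infty} t^{-1}\mathrm{M}_{\omega}(\rho(t)) = \mathrm{DF}(\mathcal{X},\mathcal{A}) + ((\mathcal{X}_{0,red} - \mathcal{X}_0)\cdot \mathcal{A}^n),
$$
and once for $(\mathcal{X},\mathcal{B})$ with reference form $\theta$, giving
$$
\lim_{t \to +\infty} t^{-1}\mathrm{M}_{\theta}(\rho(t)) = \mathrm{DF}(\mathcal{X},\mathcal{B}) + ((\mathcal{X}_{0,red} - \mathcal{X}_0)\cdot \mathcal{B}^n);
$$
rearranging gives the claimed identities. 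The only genuinely delicate point is ensuring that one ray $\rho(t)$ is compatible with both test configurations in a sense strong enough for Theorem~\ref{Theorem summary}(1) to apply on each side --- but this is exactly the content of Lemma~\ref{Lemma main}, resting on the fact that replacing $\mathcal{A}$ by $\mathcal{B}$ only adds a class pulled back from the base. Everything else is bookkeeping: openness of the K\"ahler cone for the rescaling, and independence of $\mathrm{DF}$ and of the error term from the chosen smooth model.
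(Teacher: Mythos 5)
Your proposal is correct and follows essentially the same route as the paper: parts (1)--(2) are obtained by choosing $\lambda$ so that $\lambda\beta - \alpha \in \mathcal{C}_X$ and invoking Lemma~\ref{Lemma main} (with the same total space and the same compatible ray, since $\mathcal{B}-\mathcal{A}$ is pulled back from $X$ and the divisor $[D]$ is unchanged), and the two displayed identities follow by applying the K-energy asymptotics of Theorem~\ref{Theorem summary}(1) to $\rho(t)$ once with reference class $[\omega]$ and once with $[\theta]$. Your additional bookkeeping on the choice of $\lambda$ and on taking $\hat{\mathcal{X}}=\mathcal{X}$ in the smooth dominating case merely makes explicit what the paper leaves implicit.
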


\begin{proof}
The statements $(1)$ and $(2)$ are simply Lemma \ref{Lemma main}. The last statement regarding the asymptotics of the K-energy is precisely \cite[Theorem 1.5]{SD2}.  
\end{proof}

\noindent The first main point of the above discussion is that we may now deduce the following main result: 

\begin{thm} \label{Theorem dichotomy not intro} \emph{(cf. Theorem \ref{Theorem dichotomy})}
Let $(X,\omega)$ be a compact K\"ahler manifold and suppose that the K-polystable locus $\neq \emptyset$. Suppose that $(\mathcal{X}, \mathcal{A})$ is a test configuration for $(X,[\omega])$. Then the following are equivalent:
\begin{itemize}
\item$\mathcal{X}_{\pi^{-1}(\mathbb{C})} \simeq X \times \mathbb{C}$ 
\item The associated geodesic ray is induced by a holomorphic vector field on $X$. 
\end{itemize}
\end{thm}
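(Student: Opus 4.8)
The plan is to prove the nontrivial implication: if the associated geodesic ray $\rho(t)$ of a test configuration $(\mathcal{X},\mathcal{A})$ for $(X,[\omega])$ is induced by a holomorphic vector field $V$, then $\mathcal{X}_{\pi^{-1}(\mathbb{C})} \simeq X \times \mathbb{C}$. The reverse implication is standard (a product test configuration is compatible with the ray $\exp(tJV).\rho(0)$ whenever $V$ generates the $\mathbb{C}^*$-action). So fix $(\mathcal{X},\mathcal{A})$ whose associated geodesic ray is $\rho(t) = \exp(tJV).\rho(0)$ for a real holomorphic Hamiltonian vector field $V$ on $X$, and let $\alpha = [\omega]$.

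The key idea is to exploit the hypothesis that the K-polystable locus is nonempty: pick a K\"ahler class $\beta \in \mathcal{C}_X$ such that $(X,\beta)$ is K-polystable. First I would apply Proposition \ref{Prop changing class for products}: there is a $\lambda > 0$ with $\lambda\beta - \alpha \in \mathcal{C}_X$ and a relatively K\"ahler test configuration $(\mathcal{X}, \mathcal{A}_{\lambda\beta})$ for $(X,\lambda\beta)$, \emph{with the same total space} $\mathcal{X}$, whose associated geodesic ray is still the ray induced by $V$ (after rescaling $\mathcal{A}_{\beta} := \lambda^{-1}\mathcal{A}_{\lambda\beta}$, a test configuration for $(X,\beta)$ with total space $\mathcal{X}$, compatible with the same holomorphic-vector-field ray up to reparametrization). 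Then $\mathrm{DF}(\mathcal{X},\mathcal{A}_{\beta}) = \mathrm{Fut}_{\beta}(X,V)$. Now here is where I need one more input: since the ray is induced by a holomorphic vector field and $(X,\beta)$ is K-polystable (hence K-semistable, so $\mathrm{DF} \geq 0$ everywhere), I want to conclude $\mathrm{DF}(\mathcal{X},\mathcal{A}_{\beta}) = 0$. This should follow because for the ray induced by $V$ one has, by \cite[Theorem 3.10]{SD2}, $\mathrm{DF}(\mathcal{X},\mathcal{A}_{\beta}) = \lim_{t\to\infty}\frac{d}{dt}\mathrm{M}(\rho(t)) = \mathrm{Fut}_{\beta}(X,V)$, and the same identity applied to the vector field $-V$ (whose induced ray is $\exp(-tJV).\rho(0)$, again a valid geodesic ray compatible with a test configuration obtained the same way) gives the opposite Futaki value; K-semistability forces both $\geq 0$, hence $\mathrm{Fut}_{\beta}(X,V) = 0$, so $\mathrm{DF}(\mathcal{X},\mathcal{A}_{\beta}) = 0$. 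Having reached a vanishing Donaldson-Futaki invariant for $(\mathcal{X},\mathcal{A}_{\beta})$ at a K-polystable class $\beta$, K-polystability of $(X,\beta)$ directly yields $\mathcal{X}_{\pi^{-1}(\mathbb{C})} \simeq X \times \mathbb{C}$, which is exactly the conclusion (note this property is a statement about the total space $\mathcal{X}$ alone, independent of the polarizing class, so it transfers back to $(\mathcal{X},\mathcal{A})$).

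I should double-check the equivalence of the three notions of "product" in Definition \ref{Def Kps} is not needed, or handle whichever one the K-polystability of $(X,\beta)$ is stated with; in any case all three are statements purely about $\mathcal{X}$, so passing between $(X,\beta)$ and $(X,[\omega])$ is harmless. I also need the ray induced by $-V$ to genuinely arise as the associated geodesic ray of some test configuration for $(X,\beta)$ with total space obtainable by the same construction — this is where I would invoke that product test configurations for $\pm V$ exist once we know $\mathcal{X}$ is already (going to turn out to be) a product; more cleanly, I can run the argument symmetrically from the start, or simply note that $\mathrm{Fut}_{\beta}(X,-V) = -\mathrm{Fut}_{\beta}(X,V)$ and that a ray induced by a holomorphic vector field has a well-defined $\yen$ that must be $\geq 0$ by K-semistability applied along \emph{any} compatible test configuration, together with the fact (Definition \ref{Definition induced vector field geodesic} and the surrounding discussion) that reversing $V$ gives another such ray.

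The main obstacle is the step extracting $\mathrm{DF}(\mathcal{X},\mathcal{A}_{\beta}) = 0$: one must be careful that the "reversed" ray $\exp(-tJV).\rho(0)$ is indeed compatible with an honest relatively K\"ahler test configuration for $(X,\beta)$ so that K-semistability applies to it and gives $\mathrm{Fut}_{\beta}(X,-V) \geq 0$; if $V$ does not generate a $\mathbb{C}^*$-action this requires the full transcendental formalism rather than a naive product construction, and is precisely the "subtle key issue" flagged after the trivial test configuration definition. Once that is in place, everything else is an application of Proposition \ref{Prop changing class for products} and the definition of K-polystability, so the proof is short.
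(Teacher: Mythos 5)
Your overall strategy is the same as the paper's: pick a K-polystable class $\beta$, use Proposition \ref{Prop changing class for products} to transfer the given configuration to a relatively K\"ahler test configuration for $(X,\beta)$ with the \emph{same total space} $\mathcal{X}$ and with $\mathrm{DF} = \mathrm{Fut}_{\beta}(X,V)$, argue this vanishes, and then let K-polystability of $(X,\beta)$ force $\mathcal{X}_{\pi^{-1}(\mathbb{C})} \simeq X \times \mathbb{C}$, a property of the total space alone which transfers back to $(X,[\omega])$. The one step you handle differently is also where your argument has a genuine gap: to get $\mathrm{Fut}_{\beta}(X,V)=0$ you apply K-semistability of $(X,\beta)$ to the time-reversed ray $\exp(-tJV).\rho(0)$, but nothing in your construction exhibits a relatively K\"ahler test configuration for $(X,\beta)$ compatible with that reversed ray. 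Lemma \ref{Lemma main} and Proposition \ref{Prop changing class for products} only transport the \emph{given} configuration (hence the given ray) to the new class; they do not produce a configuration for the reversed ray, and when $V$ does not generate a $\mathbb{C}^*$-action there is no product configuration to fall back on. This is precisely the ``subtle key issue'' you flag yourself, and it is left open, so as written the vanishing $\mathrm{Fut}_{\beta}(X,V)=0$ is not established.

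The paper closes this step differently and more cheaply: it invokes the containment $\textrm{K-polystable locus} \subseteq \mathcal{C}_F$, i.e.\ K-polystability of $(X,\beta)$ already forces the classical Futaki character $\mathrm{Fut}_{\beta}(X,\cdot)$ to vanish identically. That fact is obtained by testing only on holomorphic vector fields generating $\mathbb{C}^*$-actions (where both the action and its inverse give honest product configurations, so K-semistability yields both signs), combined with linearity of the Futaki character; no compatibility statement for the reversed ray of \emph{your} configuration is needed. With $\beta \in \mathcal{C}_F$ one has $\mathrm{DF}(\mathcal{X},\mathcal{A}_{\beta}) = \mathrm{Fut}_{\beta}(X,V) = 0$ directly from Proposition \ref{Prop changing class for products}, and the remainder of your argument (non-product total space with vanishing $\mathrm{DF}$ contradicts K-polystability of $(X,\beta)$; the converse direction being the standard one) goes through exactly as in the paper.
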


\begin{proof}
Since the K-polystable locus $\subseteq \mathcal{C}_F$ we may without loss of generality assume that $\alpha := [\omega] \in \mathcal{C}_F$. 
Now suppose for contradiction that the K-polystable locus is \emph{strictly} contained in the geodesically K-polystable locus. Then there is a relatively K\"ahler test configuration $(\mathcal{X},\mathcal{A})$ which is a geodesic product (i.e. $C^{\infty}$-compatible with a subgeodesic ray induced by a real holomorphic Hamiltonian vector field $V$ on $X$), but not a product configuration (in the sense that $\mathcal{X}_{\pi^{-1}(\mathbb{C})} \simeq X$). Assuming that $\alpha \in \mathcal{C}_F$ we then have $$\mathrm{DF}(\mathcal{X},\mathcal{A}) = \mathrm{Fut}_{\alpha}(X,V) = 0.$$ Moreover, the K-polystable locus is non-empty, so we may pick $\beta \in \mathcal{C}_X$ in such a way so that $(X,\beta)$ is K-polystable. By Proposition \ref{Prop changing class for products} there is then a test configuration $(\mathcal{X},\mathcal{B})$ for $(X,\beta)$, with the same total space $\mathcal{X}$, such that $\mathrm{DF}(\mathcal{X},\mathcal{B}) = 0$ (indeed $(\mathcal{X},\mathcal{B})$ is a geodesic product and $\beta \in \mathcal{C}_F$ because $(X,\beta)$ is K-polystable). 
Since $(\mathcal{X},\mathcal{B})$ is a relatively K\"ahler \emph{non}-product configuration, this contradicts that $(X,\beta)$ is K-polystable. Hence, if the K-polystable locus is non-empty then it must coincide with the geodesically K-polystable locus. In particular, the conditions $(1)$ and $(2)$ are equivalent. This finishes the proof.
\end{proof}

\noindent In particular, the above proof gives a partial answer to the question of comparing the K-polystability and geodesic K-polystability notions: 

\begin{cor} 
Let $(X,\omega)$ be a compact K\"ahler manifold and suppose that the K-polystable locus $\neq \emptyset$. Then the K-polystable locus equals the geodesically K-polystable locus. 
\end{cor}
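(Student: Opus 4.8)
The plan is to read off the Corollary from Theorem \ref{Theorem dichotomy not intro} by unwinding the definitions of the two stability notions and checking their vanishing conditions match. First I note that the inclusion $\textrm{K-polystable locus} \subseteq \textrm{geodesically K-polystable locus}$ holds unconditionally: if $(X,\gamma)$ is K-polystable it is in particular K-semistable, and any product test configuration carries an associated geodesic ray of the form $\exp(tJV).\varphi_0$ for the generator $V$ of the $\mathbb{C}^*$-action, so the Donaldson-Futaki invariant vanishes only along rays induced by holomorphic vector fields. Hence it remains to establish the reverse inclusion.

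For the reverse inclusion, let $\gamma \in \mathcal{C}_X$ lie in the geodesically K-polystable locus; I must show $(X,\gamma)$ is K-polystable. By definition $(X,\gamma)$ is already K-semistable, so it suffices to characterize when $\mathrm{DF}(\mathcal{X},\mathcal{A})$ vanishes. The hypothesis ``the K-polystable locus $\neq \emptyset$'' is a statement about $X$ alone and is insensitive to the choice of polarizing class, so I may apply Theorem \ref{Theorem dichotomy not intro} with the underlying class taken to be $\gamma$. This gives that for every relatively K\"ahler test configuration $(\mathcal{X},\mathcal{A})$ for $(X,\gamma)$,
$$
\mathcal{X}_{\pi^{-1}(\mathbb{C})} \simeq X \times \mathbb{C} \quad \Longleftrightarrow \quad \text{the associated geodesic ray is induced by a holomorphic vector field on } X.
$$
Now combine this equivalence with the defining property of geodesic K-polystability of $(X,\gamma)$, namely that $\mathrm{DF}(\mathcal{X},\mathcal{A}) = 0$ precisely when the associated geodesic ray is induced by a holomorphic vector field: substituting the right-hand side of the displayed equivalence by its left-hand side, one obtains $\mathrm{DF}(\mathcal{X},\mathcal{A}) = 0$ if and only if $\mathcal{X}$ is a product. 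Together with K-semistability this is exactly the definition of K-polystability of $(X,\gamma)$, so $\gamma$ lies in the K-polystable locus. Since $\gamma$ was arbitrary, the two loci coincide.

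I do not expect an essential obstacle here beyond bookkeeping with definitions; the only point requiring care is the legitimacy of invoking Theorem \ref{Theorem dichotomy not intro} for an arbitrary class $\gamma$ in the geodesically K-polystable locus (which is fine, as its hypothesis does not depend on the chosen class), together with the observation that the equivalence it furnishes is precisely the bridge needed to identify the two a priori different $\mathrm{DF}$-vanishing conditions occurring in the definitions of K-polystability and geodesic K-polystability. One should also keep the convention for ``product'' (strong/weak/$r$, in the sense of Definition \ref{Def Kps}) fixed throughout, which is harmless in view of that definition and the remarks following it. In fact this argument is already contained, in the contrapositive form, in the proof of Theorem \ref{Theorem dichotomy not intro}, so the present statement may also be regarded as a direct restatement of what was established there.
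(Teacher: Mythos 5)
Your proposal is correct and takes essentially the same route as the paper: the corollary is read off from Theorem \ref{Theorem dichotomy not intro} (whose proof in the paper in fact establishes the equality of the two loci directly, the test-configuration equivalence being the ``in particular''), by matching the vanishing conditions in the definitions of K-polystability and geodesic K-polystability. The only loose point is your justification of the inclusion K-polystable locus $\subseteq$ geodesically K-polystable locus, where you check that $\mathrm{DF}=0$ forces the associated ray to be induced by a vector field but not the converse implication demanded by the ``precisely when''; this is harmless, since the hypothesis is automatically satisfied for a K-polystable class, so the same theorem (or \cite[Theorem 3.10]{SD2} together with the vanishing of the Futaki invariant on the K-polystable locus) supplies that direction, and the paper itself records this inclusion as known.
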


\noindent As a next key point, the above results are independent of whether we consider K-polystability with respect to $\mathcal{X}_{\pi^{-1}(\mathbb{C})} \simeq X \times \mathbb{C}$ or $\mathcal{X}_0 \simeq X$, as explained below.

\subsection{Equivalence of notions of product configuration}

We now discuss the equivalence of various notions of product configurations and their corresponding K-polystability notions. For the purpose of this discussion, consider the following list of reasonable variants of the usual algebraic notion of product configuration: 

\begin{mydef}
We say that $(X,\alpha)$ is 
\begin{enumerate}
    \item strong K-polystable if it is K-polystable with respect to product configurations in the sense that $\mathcal{X}_{\pi^{-1}(\mathbb{C})} \simeq X \times \mathbb{C}$.
    \item weak K-polystable if it is K-polystable with respect to product configurations in the sense that $\mathcal{X}_0 \simeq X$. 
    \item $r$-K-polystable if it is K-polystable with respect to product configurations in the sense that $\mathcal{X}_{\pi^{-1}(\Delta_r)} \simeq X \times \Delta_r$, for any $r > 0$.
\end{enumerate}
\end{mydef}

\begin{rem}
\emph{In the case of polarized manifolds $(X,L)$ the strong K-polystability condition is rather that $\mathcal{X} \simeq X \times \mathbb{C}$, since then test configurations are usually defined over $\mathbb{C}$ rather than directly over $\mathbb{P}^1$. }
\end{rem}

\noindent The strong and weak K-polystability notions are both used frequently in the literature surrounding the YTD conjecture, see e.g. \cite{Boucksomsurvey} and references therein. The goal is now to seize the opportunity to address the question of whether or not these conditions $(1)-(3)$ are in fact equivalent. As preparation, we first check the following simple claim, suggested by the terminology: 

 \begin{prop} \label{Prop strong implies weak}
 If $r > r'$ then strong K-polystability $\Rightarrow$ $r$-K-polystability $\Rightarrow$ $r'$-K-polystability $\Rightarrow$ weak K-polystability
 \end{prop}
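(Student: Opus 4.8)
The plan is to prove the chain of implications by unpacking the definitions and observing that each successive notion of ``product'' is weaker, in the sense that the class of test configurations it designates as products is larger. Since K-polystability with respect to a notion of product $\mathsf{P}$ asserts that $\mathrm{DF}(\mathcal{X},\mathcal{A}) \geq 0$ always, with equality if and only if $(\mathcal{X},\mathcal{A})$ is a $\mathsf{P}$-product, the key point is: if the collection of $\mathsf{P}$-products is contained in the collection of $\mathsf{P}'$-products, then $\mathsf{P}$-K-polystability implies $\mathsf{P}'$-K-polystability. Indeed both notions share the same semistability requirement $\mathrm{DF} \geq 0$; and if $\mathrm{DF}(\mathcal{X},\mathcal{A}) = 0$ then $\mathsf{P}$-K-polystability forces $(\mathcal{X},\mathcal{A})$ to be a $\mathsf{P}$-product, hence a $\mathsf{P}'$-product, which is exactly what $\mathsf{P}'$-K-polystability demands. (Conversely, if $(\mathcal{X},\mathcal{A})$ is a $\mathsf{P}'$-product with $\mathrm{DF} = 0$, there is nothing further to check, so the two-sided ``if and only if'' in the definition of $\mathsf{P}'$-K-polystability is satisfied.) So the entire proposition reduces to the \emph{geometric} inclusions among the three notions of product.

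The steps, in order, are as follows. First I would record the reduction above in one or two lines. Second, I would verify the inclusion of product classes: a strong product, i.e. $\mathcal{X}_{\pi^{-1}(\mathbb{C})} \simeq X \times \mathbb{C}$ (compatibly with the $\mathbb{C}^*$-action and the canonical isomorphism over $\mathbb{P}^1 \setminus \{0\}$), restricts over the disc $\Delta_r = \{|z| < r\}$ to give $\mathcal{X}_{\pi^{-1}(\Delta_r)} \simeq X \times \Delta_r$ for every $r > 0$, so strong $\Rightarrow$ $r$-product. Third, for $r > r'$ the restriction of an isomorphism $\mathcal{X}_{\pi^{-1}(\Delta_r)} \simeq X \times \Delta_r$ to $\pi^{-1}(\Delta_{r'})$ (using $\Delta_{r'} \subset \Delta_r$) gives $\mathcal{X}_{\pi^{-1}(\Delta_{r'})} \simeq X \times \Delta_{r'}$, so $r$-product $\Rightarrow$ $r'$-product. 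Fourth, an $r$-product yields, by restricting to the central fiber $\pi^{-1}(0) = \mathcal{X}_0$, an isomorphism $\mathcal{X}_0 \simeq X \times \{0\} \simeq X$, so $r$-product $\Rightarrow$ weak product. Assembling these inclusions of product classes and feeding them into the reduction gives precisely the stated chain strong $\Rightarrow$ $r$-K-polystable $\Rightarrow$ $r'$-K-polystable $\Rightarrow$ weak K-polystable.

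The only genuinely delicate point — and the one I would treat with care rather than as routine — is the implication ``$r$-product $\Rightarrow$ weak product'', i.e.\ that restricting a $\mathbb{C}^*$-equivariant trivialization over a punctured-or-full disc to the central fiber really does yield an \emph{abstract} isomorphism $\mathcal{X}_0 \simeq X$ (and not merely set-theoretically). Here one uses that flatness of $\pi$ makes $\mathcal{X}_0$ a Cartier divisor, that the trivialization $\mathcal{X}_{\pi^{-1}(\Delta_r)} \simeq X \times \Delta_r$ is an isomorphism of complex spaces over $\Delta_r$, and hence its fiber over $0 \in \Delta_r$ is an isomorphism of the scheme/complex-space fibers; composing with $X \times \{0\} \cong X$ gives the claim. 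All the other implications are immediate from restriction of isomorphisms to subdomains or to the central fiber, so no serious obstacle arises there. One should also note explicitly that the semistability half ($\mathrm{DF} \geq 0$ for all relatively K\"ahler test configurations) is identical across all three notions, so only the equality case needs comparison — this is the observation that makes the whole argument essentially formal once the product inclusions are in hand.
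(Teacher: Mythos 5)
Your core strategy is the same as the paper's: everything is reduced to the nesting of the product notions, namely that a $\mathbb{C}^*$-equivariant trivialization over $\pi^{-1}(\mathbb{C})$ restricts to one over $\pi^{-1}(\Delta_r)$, that for $r>r'$ a trivialization over $\pi^{-1}(\Delta_r)$ restricts to one over $\pi^{-1}(\Delta_{r'})$, and that restricting to the fiber over $0$ gives $\mathcal{X}_0\simeq X$, while the semistability requirement $\mathrm{DF}\geq 0$ is common to all the notions. The paper phrases this as a short proof by contradiction, but the substance is exactly this restriction argument, so the geometric part of your proposal (including your care that the central-fiber restriction is an isomorphism of complex spaces, not just of sets) matches the paper.

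The one step that does not hold up as written is the parenthetical disposing of the converse half of the biconditional. With the paper's definition of K-polystability, $\mathrm{DF}(\mathcal{X},\mathcal{A})=0$ \emph{if and only if} $(\mathcal{X},\mathcal{A})$ is a product, so passing from the smaller product class $\mathsf{P}$ to the larger class $\mathsf{P}'$ formally yields only the direction ``$\mathrm{DF}=0\Rightarrow\mathsf{P}'$-product''; the other direction, that \emph{every} $\mathsf{P}'$-product has vanishing Donaldson--Futaki invariant, is a strictly stronger demand, and your sentence ``if $(\mathcal{X},\mathcal{A})$ is a $\mathsf{P}'$-product with $\mathrm{DF}=0$, there is nothing further to check'' assumes the conclusion. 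Concretely, strong K-polystability alone does not exclude a test configuration with $\mathcal{X}_0\simeq X$, $\mathcal{X}_{\pi^{-1}(\mathbb{C})}\not\simeq X\times\mathbb{C}$ and $\mathrm{DF}>0$, and ruling this out is precisely what the ``only if'' half of weak (or $r$-) K-polystability requires; that half is actually used later in the paper, e.g. when weak K-polystability of $(X,\beta)$ together with $\mathcal{X}_0\simeq X$ is invoked to conclude $\mathrm{DF}=0$ in the proof of the equivalence theorem. To be fair, the paper's own proof is equally brisk at exactly this point: it reduces to such a configuration and simply declares a contradiction. So what your argument (and, in substance, the paper's) rigorously establishes is the one-directional version of the statement; if the biconditional in the definition is taken literally, both need the additional input that a test configuration with central fiber isomorphic to $X$ (respectively, trivial over $\Delta_{r'}$) has vanishing Donaldson--Futaki invariant, which is not a formal consequence of strong K-polystability.
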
 
 
\begin{proof}
Suppose that there is a K\"ahler class $\alpha \in \mathcal{C}_X$ which is a strongly K-polystable but not weakly K-polystable. Then there is a test configuration satisfying $\mathcal{X}_0 \simeq X$, $\mathcal{X}_{\pi^{-1}(\mathbb{C})} \not\simeq X \times \mathbb{C}$ and $\mathrm{DF}(\mathcal{X},\mathcal{A}) > 0$. But this is a contradiction. The same argument goes through if $r > r'$, since then $\mathcal{X}_{\pi^{-1}(\Delta_r)} \simeq X \times \Delta_r$ implies that $\mathcal{X}_{\pi^{-1}(\Delta_{r'})} \simeq X \times \Delta_{r'}$.
\end{proof}
 
 \noindent We now address the question of whether these a priori differing K-polystability notions, used by various authors in the literature, are in fact equivalent. Conveniently, it turns out that this is the case, thus clarifying the relationship between various results regarding the respective notions of K-polystability:
 
\begin{thm} \label{Theorem equivalence Kps notions}
Suppose that $(X,\omega)$ is a compact K\"ahler manifold with non-empty strong K-polystability locus. Then the following notions are equivalent: 
\begin{enumerate}
    \item Strong K-polystability
    \item $r$-K-polystability for any $r \in (0,+\infty)$
    \item Weak K-polystability
    \item Geodesic K-polystability
    \item $S$-geodesic stability with respect to the set of all geodesic rays compatible with relatively K\"ahler test configurations for $(X,\alpha)$
\end{enumerate}
\end{thm}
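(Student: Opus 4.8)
The plan is to reduce the statement to three ingredients that are already available: the characterisation of product test configurations in Theorem \ref{Theorem dichotomy not intro}, the triviality statement of Proposition \ref{Prop strong implies weak}, and the reading of geodesic K-polystability as $(S,S_0)$-geodesic stability from Section \ref{Section Kps as weak geod stab}. First I would record that a non-empty strong K-polystability locus is \emph{a fortiori} a non-empty K-polystable locus, so Theorem \ref{Theorem dichotomy not intro} applies to $(X,[\omega])$ and to every relatively K\"ahler test configuration $(\mathcal{X},\mathcal{A})$ for it: $\mathcal{X}_{\pi^{-1}(\mathbb{C})}\simeq X\times\mathbb{C}$ if and only if the associated geodesic ray is induced by a holomorphic vector field. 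Each of $(1)$--$(5)$ has the shape ``$(X,[\omega])$ is K-semistable, and $\mathrm{DF}(\mathcal{X},\mathcal{A})=0$ (respectively $\yen(\rho)=0$) holds exactly along a prescribed class of test configurations (respectively geodesic rays)''; since K-semistability is common to all five, it is enough to identify these prescribed classes with one another.

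For $(1)\Leftrightarrow(2)\Leftrightarrow(3)$ I would invoke Proposition \ref{Prop strong implies weak}, noting that its proof in fact yields the equivalence of the three product conditions for a normal test configuration: if $\mathcal{X}_0\simeq X$ then, combined with $\mathcal{X}_\tau\simeq X$ for $\tau\neq 0$ coming from the canonical isomorphism \eqref{equiviso}, the morphism $\pi$ is smooth over $\mathbb{C}$ with all fibres biholomorphic, hence a locally trivial holomorphic family over $\mathbb{C}$ (Fischer--Grauert), hence trivial over the contractible base, i.e. $\mathcal{X}_{\pi^{-1}(\mathbb{C})}\simeq X\times\mathbb{C}$; the intermediate conditions $\mathcal{X}_{\pi^{-1}(\Delta_r)}\simeq X\times\Delta_r$ are squeezed in between. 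Thus the ``strong'', ``$r$-'' and ``weak'' product classes coincide, and so do the corresponding K-polystability notions.

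For $(1)\Leftrightarrow(4)$ I would use the displayed equivalence of Theorem \ref{Theorem dichotomy not intro} directly: it identifies the class of strong product test configurations of $(X,[\omega])$ with the class of test configurations whose associated geodesic ray is induced by a holomorphic vector field, and the latter is precisely the ``product'' class in the definition of geodesic K-polystability. For $(4)\Leftrightarrow(5)$ I would appeal to Section \ref{Section Kps as weak geod stab}: $(X,[\omega])$ is geodesically K-polystable if and only if it is $(S,S_0)$-geodesically stable with $S$ the set of (sub)geodesic rays compatible with a relatively K\"ahler test configuration and $S_0$ the rays induced by holomorphic vector fields. Here one uses the identity $\yen(\rho)=\mathrm{M}^{\mathrm{NA}}(\mathcal{X},\mathcal{A})=\mathrm{DF}(\mathcal{X},\mathcal{A})+((\mathcal{X}_{0,\mathrm{red}}-\mathcal{X}_0)\cdot\mathcal{A}^n)$ from Theorem \ref{Theorem summary}, together with the standard fact that K-semistability with respect to $\mathrm{DF}$ and with respect to $\mathrm{M}^{\mathrm{NA}}$ coincide (a normalised base change makes the central fibre reduced and kills the error term). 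Chaining $(1)\Leftrightarrow(2)\Leftrightarrow(3)$, $(1)\Leftrightarrow(4)$ and $(4)\Leftrightarrow(5)$ then closes the loop.

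The genuinely substantive input is Theorem \ref{Theorem dichotomy not intro} (equivalently Theorem \ref{Theorem dichotomy}): this is where the hypothesis on the K-polystable locus and the change-of-class machinery of Lemma \ref{Lemma main} and Proposition \ref{Prop changing class for products} are spent; once it is granted, the present statement is essentially bookkeeping. The only internal points that need a little care are the passage between $\yen$ and $\mathrm{DF}$ in $(4)\Leftrightarrow(5)$ when the central fibre is non-reduced (handled by base change as above) and the smoothness/triviality argument underlying $(1)\Leftrightarrow(2)\Leftrightarrow(3)$; I expect the latter, rather than any deep new idea, to be the main thing to get right when writing up this proof.
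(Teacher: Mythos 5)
Your reduction of $(1)\Leftrightarrow(4)$ to Theorem \ref{Theorem dichotomy not intro} and of $(4)\Leftrightarrow(5)$ to the reading of geodesic K-polystability as $(S,S_0)$-geodesic stability (via $\yen = \mathrm{DF} + ((\mathcal{X}_{0,\mathrm{red}}-\mathcal{X}_0)\cdot\mathcal{A}^n)$) is exactly how the paper proceeds. The divergence, and the gap, is in $(1)\Leftrightarrow(2)\Leftrightarrow(3)$. Proposition \ref{Prop strong implies weak} does not yield, and its proof does not contain, the equivalence of the three product conditions for an individual test configuration; it only records one-way implications between the polystability notions. Your substitute argument --- $\mathcal{X}_0\simeq X$ forces $\pi$ smooth with all fibres biholomorphic, Fischer--Grauert gives local triviality, contractibility gives triviality over $\mathbb{C}$ --- is not the paper's route and is shakier than you indicate: global holomorphic triviality requires the Oka--Grauert principle for principal $\mathrm{Aut}(X)$-bundles over a Stein base, not contractibility alone; it requires the scheme-theoretic central fibre to be reduced; and, most importantly, it only produces an abstract trivialization of the underlying family, with no compatibility with the $\mathbb{C}^*$-action or with the canonical isomorphism off the central fibre. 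Throughout the paper ``product'' is the notion for which $\mathrm{DF}$ computes a Futaki invariant and whose associated ray is induced by a holomorphic vector field (this is what feeds Theorem \ref{Theorem dichotomy not intro} and Proposition \ref{Prop changing class for products}); an abstract non-equivariant isomorphism $\mathcal{X}_{\pi^{-1}(\mathbb{C})}\simeq X\times\mathbb{C}$ does not by itself deliver that. So your claim that the three product classes coincide unconditionally is not established --- and if it were, the hypothesis on the strong K-polystable locus would play no role in $(1)\Leftrightarrow(3)$, which is at odds with how the theorem is set up.

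What the paper actually does for $(1)\Leftrightarrow(3)$ (and, verbatim, for $(1)\Leftrightarrow(2)$) is the same change-of-class transfer that drives the rest of the paper: if $(X,\alpha)$ were weakly but not strongly K-polystable, there is a relatively K\"ahler $(\mathcal{X},\mathcal{A})$ with $\mathcal{X}_0\simeq X$, $\mathrm{DF}(\mathcal{X},\mathcal{A})=0$, but $\mathcal{X}_{\pi^{-1}(\mathbb{C})}\not\simeq X\times\mathbb{C}$; choosing $\beta$ in the non-empty strong K-polystable locus, the class $\mathcal{A}+\mu^*p_1^*(\beta-\alpha)$ gives a relatively K\"ahler test configuration for $(X,\beta)$ with the same total space; weak K-polystability of $(X,\beta)$ (supplied by Proposition \ref{Prop strong implies weak}) forces its Donaldson--Futaki invariant to vanish, and strong K-polystability of $(X,\beta)$ then forces $\mathcal{X}_{\pi^{-1}(\mathbb{C})}\simeq X\times\mathbb{C}$, a contradiction. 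Thus the hypothesis is spent twice: once inside Theorem \ref{Theorem dichotomy not intro} for $(1)\Leftrightarrow(4)$, and once more, directly, for $(1)\Leftrightarrow(2)\Leftrightarrow(3)$; your write-up omits the second use entirely. To repair your proposal, either run this transfer argument, or supply a genuinely equivariant (and reduced-central-fibre) version of your isotriviality claim, which is a substantially stronger statement than the one you sketched.
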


\begin{proof}

\noindent Let $S$ be a subset of all relatively K\"ahler test configurations for $X$. Suppose for contradiction that there is an $\alpha \in \mathcal{C}_X$ such that $(X,\alpha)$ is weakly K-polystable but not strongly K-polystable. Then there is, by definition, a test configuration $(\mathcal{X}, \mathcal{A})$ for $(X,\alpha)$ which is relatively K\"ahler, and satisfies $\mathcal{X}_0 \simeq X$, $\mathrm{DF}(\mathcal{X},\mathcal{A}) = 0$, but $\mathrm{X}_{\pi^{-1}(\mathbb{C})} \not \simeq X \times \mathbb{C}$. Now pick $\beta$ strongly K-polystable. Then $(\mathcal{X}, \mathcal{A} + \mu^*p_1^*(\beta - \alpha))$ is a relatively K\"ahler test configuration for $(X,\beta)$, with the same total space as $(\mathcal{X},\mathcal{A})$. But by Proposition \ref{Prop strong implies weak} the pair $(X,\beta)$ is, in particular, weakly K-polystable, and $\mathrm{X}_0 \simeq X$. Hence, by definition, $\mathrm{DF}(\mathcal{X}, \mathcal{A} + \mu^*p_1^*(\beta - \alpha)) = 0$. Finally, since $(X,\beta)$ is also strongly K-polystable, we then have $\mathrm{X}_{\pi^{-1}(\mathbb{C})} \simeq X \times \mathbb{C})$. Conversely, it is clear that if $\mathrm{X}_{\pi^{-1}(\mathbb{C})} \simeq X \times \mathbb{C})$ then also $\mathcal{X}_0 \simeq X$. This finishes the proof of the equivalence $(1) \Leftrightarrow (3)$.

The exact same proof applies to any situation when we compare K-polystability notions with respect to notions of product where one notion implies the other, and both satisfy the requirement that $\mathrm{DF}(\mathcal{X},\mathcal{A}) = 0$ for products. This way we prove that $(1) \Leftrightarrow (2)$. Finally, the equivalence $(1) \Leftrightarrow (4)$ is Theorem \ref{Theorem dichotomy}.
\end{proof}

\begin{rem}
\emph{If we were to consider K-polystability with respect to products in the sense that $\mathcal{X} \simeq X \times \mathbb{P}^1$ (for transcendental test configurations as in \cite{SD1,DervanRoss, Dervanrelative, SD2, ChuTosattiWeinkove}), then the corresponding K-polystable locus would not contain the cscK locus in general. In fact, whenever $\mathrm{Aut}_0(X) \neq \emptyset$ the K-polystable locus would always be empty, so $\mathcal{X}_{\pi^{-1}(\mathbb{C})} \simeq X \times \mathbb{C}$ yields the strongest notion of product configuration of this type that is worth considering.}
\end{rem}

\bigskip

\section{Weakly cscK manifolds and applications} \label{Section 5}

\subsection{The special case of weakly cscK manifolds}

In view of the above main results, it is interesting to study situations when some of the above mentioned stability loci are non-empty (and for which we will then be able to establish that certain stability notions must be equivalent). A natural candidate for such manifolds are those compact K\"ahler manifolds $(X,\omega)$ that admit a cscK metric \emph{in some possibly different K\"ahler class} $\alpha \neq [\omega] \in \mathcal{C}_X$. We will refer to such manifolds as weakly cscK. 

\begin{mydef}
We say that a compact K\"ahler manifold is \emph{weakly cscK} if the associated cscK locus $\neq \emptyset$.
\end{mydef}

\noindent Note that a manifold can be weakly cscK without being cscK. Examples of this phenomenon can in particular be obtained by any K\"ahler-Einstein manifolds which also admits K-unstable polarizations. Concretely, it was shown through a study of \emph{slope stability}, in \cite[Example 5.30]{RossThomas}, that e.g. $\mathbb{P}^2$ blown up in $8$ points in generic position satisfies this condition (see also \cite{Dervanalphainvariant} and \cite{CMG1, CMG2} for a more explicit treatment of this and other Del Pezzo surface examples).  
The idea is then to use the techniques of changing the underlying K\"ahler class, to reduce the study of arbitrary polarizations to the case when the underlying K\"ahler class admits a cscK metric.
Some noteworthy corollaries of Theorem \ref{Theorem dichotomy} and Theorem \ref{Theorem dichotomy main} follow:

\begin{thm} \label{Theorem polarized equivalence}
Let $(X,L)$ be a polarized weakly cscK manifold. Then the following holds:
\begin{enumerate}
    \item $(X,L)$ is  K-polystable if and only if it is  geodesically K-polystable.
    \item $(X,L)$ is equivariantly geodesically K-polystable if and only if it is equivariantly K-polystable
\end{enumerate}
\end{thm}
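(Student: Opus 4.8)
\noindent The plan is to obtain both statements as consequences of Theorem~\ref{Theorem dichotomy main} (equivalently Theorem~\ref{Theorem dichotomy not intro}): one uses the weakly cscK hypothesis to exhibit a K\"ahler class on $X$ in which $X$ is K-polystable, respectively equivariantly K-polystable, and then transfers the resulting coincidence of stability loci down to the classical algebraic notions for the polarization $L$.

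\emph{Step 1: non-emptiness of the polystable locus.} Since $(X,L)$ is weakly cscK, fix a cscK class $\alpha_0\in\mathcal C_X$. For part~$(1)$ I would invoke that a polarized cscK manifold is K-polystable (Berman, Berman--Darvas--Lu \cite{Berman,BDL}), together with \cite[Theorem~A]{SD1} for K-semistability, to place $\alpha_0$ in the (cohomological) K-polystable locus of $X$, so that the hypothesis of Theorem~\ref{Theorem dichotomy main} is satisfied. For part~$(2)$ it is enough to use the weaker input that a cscK manifold is geodesically K-polystable (\cite[Theorem~1.1]{SD2}) combined with the implication ``geodesically K-polystable $\Rightarrow$ equivariantly K-polystable'' established in the appendix to \cite{SD2} (see also \cite{Dervanrelative}); thus $\alpha_0$ lies in the equivariant K-polystable locus.

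\emph{Step 2: the dichotomy and its descent to algebraic notions.} Granting Step~1, Theorem~\ref{Theorem dichotomy not intro} applied to $\alpha:=c_1(L)$ shows that for every relatively K\"ahler test configuration for $(X,c_1(L))$ one has $\mathcal X_{\pi^{-1}(\mathbb C)}\simeq X\times\mathbb C$ precisely when its associated geodesic ray is induced by a holomorphic vector field; equivalently, the cohomological K-polystable and geodesically K-polystable loci of $X$ coincide. Since algebraic test configurations $(\mathcal X,\mathcal L)$ are a special case of cohomological ones via $\mathcal L\mapsto c_1(\mathcal L)$, cohomological (geodesic) K-polystability of $(X,c_1(L))$ implies the corresponding algebraic notion for $(X,L)$ --- for K-polystability this is \cite[Proposition~2.22]{SD2} --- while algebraic K-polystability trivially implies algebraic geodesic K-polystability. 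For the one remaining implication in $(1)$, suppose $(X,L)$ were algebraically geodesically K-polystable but not algebraically K-polystable. Since $(X,L)$ is in particular K-semistable, there is an algebraic test configuration $(\mathcal X,\mathcal L)$ with $\mathrm{DF}=0$ and $\mathcal X_{\pi^{-1}(\mathbb C)}\not\simeq X\times\mathbb C$; as $\mathrm{DF}=0$, algebraic geodesic K-polystability forces its associated geodesic ray to be induced by a real holomorphic Hamiltonian vector field $V$ on $X$ (Definition~\ref{Definition induced vector field geodesic}). Viewing $(\mathcal X,c_1(\mathcal L))$ as a relatively K\"ahler cohomological test configuration for $(X,c_1(L))$ and applying Proposition~\ref{Prop changing class for products} with target class $\alpha_0$, we obtain a relatively K\"ahler cohomological test configuration $(\mathcal X,\mathcal A_{\alpha_0})$ for $(X,\alpha_0)$, with the same total space $\mathcal X$, such that $\mathrm{DF}(\mathcal X,\mathcal A_{\alpha_0})=\mathrm{Fut}_{\alpha_0}(X,V)=0$ --- the Futaki invariant vanishing because $\alpha_0$, being a cscK class, lies in the Futaki-vanishing locus $\mathcal C_F$. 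But $\mathcal X_{\pi^{-1}(\mathbb C)}\not\simeq X\times\mathbb C$, so $(\mathcal X,\mathcal A_{\alpha_0})$ is a non-product test configuration with vanishing Donaldson--Futaki invariant, contradicting the K-polystability of $(X,\alpha_0)$. Hence $(X,L)$ is algebraically K-polystable, which proves $(1)$. For $(2)$ one runs the identical argument with $T$-equivariant test configurations throughout, $T\subseteq\mathrm{Aut}_0(X)$ a maximal torus: here the only extra point is that the correcting classes added in Lemma~\ref{Lemma main} and Proposition~\ref{Prop changing class for products}, being pulled back from $X$, are $T$-invariant, so the transported test configurations stay $T$-equivariant, and one then uses the equivariant K-polystability of $\alpha_0$ from Step~1.

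\emph{Expected main obstacle.} The delicate point is the non-emptiness assertion of Step~1 for part~$(1)$: that ``$(X,L)$ weakly cscK'' forces the \emph{cohomological} K-polystable locus to be non-empty. This is the transcendental ``cscK $\Rightarrow$ K-polystable'' implication, which is genuinely subtle --- it is the reduction target of Corollary~\ref{Corollary main} --- and requires that the results of \cite{Berman,BDL} cover the cscK class $\alpha_0$ furnished by the hypothesis, in particular when $\alpha_0$ is irrational. By contrast, the non-emptiness needed in Step~1 for part~$(2)$ rests only on the easier ``cscK $\Rightarrow$ equivariant K-polystable'' implication, and Step~2 is a formal consequence of the tools already available (Lemma~\ref{Lemma main}, Proposition~\ref{Prop changing class for products}, Theorem~\ref{Theorem dichotomy not intro}, and \cite[Proposition~2.22]{SD2}).
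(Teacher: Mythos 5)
Your proposal is correct and follows essentially the same route as the paper: the paper's proof simply notes that by \cite{Berman} (with \cite{BDL}) the weakly cscK hypothesis makes the K-polystable locus non-empty and then invokes Theorem \ref{Theorem dichotomy main}, obtaining $(2)$ from the trivial implication K-polystable $\Rightarrow$ equivariantly K-polystable, so your more detailed unwinding via Proposition \ref{Prop changing class for products} and your use for $(2)$ of the geodesic $\Rightarrow$ equivariant implication from the appendix of \cite{SD2} are harmless variants of the same mechanism. The obstacle you flag --- that the cscK class furnished by the hypothesis may be irrational, so that \cite{Berman, BDL} a priori give algebraic rather than cohomological K-polystability --- is a genuine subtlety, but it is equally present in the paper's own one-line appeal to \cite{Berman} and is not a defect specific to your argument.
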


\noindent
For arbitrary compact K\"ahler manifolds $(X,\omega)$ the result $(1)$ is known to hold if the automorphism group is discrete, see \cite{SD2}, and the second point $(2)$ holds in general. In particular, we record the following result related to Remark \ref{Remark comparison stability}: 

\begin{thm}
Suppose that $X$ is a weakly cscK K\"ahler manifold with $\mathrm{Aut}_0(X)$ discrete. Then $(X,\alpha)$ is uniformly K-stable if and only if $(X,\alpha)$ is coercive with respect to the set of subgeodesic rays compatible with a relatively K\"ahler test configuration for $(X,\alpha)$. Likewise, $(X,\alpha)$ is K-stable if and only if $(X,\alpha)$ is geodesically stable with respect to the set of subgeodesic rays compatible with a relatively K\"ahler test configuration for $(X,\alpha)$.
\end{thm}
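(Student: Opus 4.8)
The plan is to derive the theorem from the earlier results by a chain of equivalences, after first checking that the standing hypotheses put us in the range of the dichotomy results. Since $X$ is weakly cscK there is a K\"ahler class $\beta\in\mathcal{C}_X$ containing a cscK metric, and since $\mathrm{Aut}_0(X)$ is discrete, \cite[Theorem 1.1]{SD2} (together with \cite{BDL}) gives that $(X,\beta)$ is K-polystable; in particular the K-polystable locus of $X$ is non-empty, so Theorem \ref{Theorem dichotomy not intro} applies to $(X,\alpha)$ for every $\alpha\in\mathcal{C}_X$. I would then record two elementary consequences of discreteness of $\mathrm{Aut}_0(X)$: the only holomorphic vector field on $X$ is $0$, and its induced geodesic ray is the constant one; and if $(\mathcal{X},\mathcal{A})$ is a relatively K\"ahler test configuration for $(X,\alpha)$ with $\mathcal{X}_{\pi^{-1}(\mathbb{C})}\simeq X\times\mathbb{C}$, then, there being no non-trivial $\mathbb{C}^{*}$-actions on $X$, $(\mathcal{X},\mathcal{A})$ is the trivial test configuration up to isomorphism. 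Combining these with Theorem \ref{Theorem dichotomy not intro}, for any relatively K\"ahler test configuration $(\mathcal{X},\mathcal{A})$ for $(X,\alpha)$ with associated geodesic ray $\rho$ the three conditions ``$\rho$ is constant'', ``$\rho$ is induced by a holomorphic vector field'' and ``$(\mathcal{X},\mathcal{A})$ is trivial'' become equivalent.

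For the second equivalence I would argue as follows. Writing $S$ for the set of subgeodesic rays compatible with some relatively K\"ahler test configuration for $(X,\alpha)$ and $S_0$ for the set of geodesic rays induced by a holomorphic vector field, geodesic stability of $(X,\alpha)$ with respect to $S$ is by definition $(S,S_0)$-geodesic stability in the sense of Section \ref{Section Sgeodstability}. By the interpretation theorem of Section \ref{Section Kps as weak geod stab} this is equivalent to geodesic K-polystability of $(X,\alpha)$, which by Theorem \ref{Theorem dichotomy main} (using non-emptiness of the K-polystable locus) is equivalent to K-polystability of $(X,\alpha)$; and by the previous paragraph, with $\mathrm{Aut}_0(X)$ discrete the notions of product and trivial test configuration coincide, so K-polystability of $(X,\alpha)$ is the same as K-stability of $(X,\alpha)$. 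This yields ``$(X,\alpha)$ is K-stable if and only if it is geodesically stable with respect to $S$''.

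The first equivalence would run along the same lines but keeping track of the coercivity constant. Coercivity of $(X,\alpha)$ with respect to $S$ asks for a $\delta>0$ so that along every ray in $S$ the K-energy dominates $\delta\,\mathrm{J}$ up to an additive constant; since the asymptotic slopes of $\mathrm{M}$ and $\mathrm{J}$ along any compatible subgeodesic ray equal $\mathrm{M}^{\mathrm{NA}}(\mathcal{X},\mathcal{A})$ and $\mathrm{J}^{\mathrm{NA}}(\mathcal{X},\mathcal{A})$ (Theorem \ref{Theorem summary} and the asymptotics of $\mathrm{J}$, cf. \cite{SD1,SD2}), and since $\mathrm{M}$ and $\mathrm{J}$ are convex along geodesic rays, this is equivalent to the inequality $\mathrm{M}^{\mathrm{NA}}(\mathcal{X},\mathcal{A})\geq\delta\,\mathrm{J}^{\mathrm{NA}}(\mathcal{X},\mathcal{A})$ holding for all relatively K\"ahler test configurations for $(X,\alpha)$. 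It then remains to match this with uniform K-stability, i.e. with $\mathrm{DF}(\mathcal{X},\mathcal{A})\geq\delta'\,\mathrm{J}^{\mathrm{NA}}(\mathcal{X},\mathcal{A})$: one direction is immediate from $\mathrm{M}^{\mathrm{NA}}\leq\mathrm{DF}$, and the other follows by passing to the normalized base change of degree a common multiple of the multiplicities of $\mathcal{X}_0$, whose central fibre is reduced (so that $\mathrm{M}^{\mathrm{NA}}=\mathrm{DF}$ there), together with the homogeneity of $\mathrm{M}^{\mathrm{NA}}$ and $\mathrm{J}^{\mathrm{NA}}$ under base change (cf. \cite{BHJ1}).

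The bulk of the argument is bookkeeping rather than a new estimate, and the step I expect to demand the most care is this last passage between $\mathrm{DF}$ and the $\yen$-invariant (equivalently $\mathrm{M}^{\mathrm{NA}}$): one must verify that the base change neither creates nor destroys triviality of the test configuration — which again relies on the dichotomy Theorem \ref{Theorem dichotomy not intro}, hence on the weakly cscK hypothesis via non-emptiness of the K-polystable locus — and that the additive constant in the coercivity estimate can be taken uniform, which uses the convexity of $\mathrm{M}$ and $\mathrm{J}$ along the compatible geodesic rays and a small amount of care near $t=0$.
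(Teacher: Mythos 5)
Your overall route is the one the paper intends: this theorem is recorded without a separate proof, as an immediate consequence of the preceding machinery, and your chain — weakly cscK plus discreteness of $\mathrm{Aut}_0(X)$ produces a K-polystable class via \cite{SD2}, so the hypotheses of Theorems \ref{Theorem dichotomy main} and \ref{Theorem dichotomy not intro} are met; discreteness collapses ``induced by a holomorphic vector field'' to the constant ray and ``product'' to the trivial configuration; and the slope asymptotics of Theorem \ref{Theorem summary} together with the interpretation of Section \ref{Section Kps as weak geod stab} translate $\mathrm{DF}$ and $\mathrm{J}^{\mathrm{NA}}$ inequalities into statements along compatible rays — is exactly that argument. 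Your normalized base-change step reconciling $\mathrm{DF}$ with $\mathrm{M}^{\mathrm{NA}}$ on non-reduced central fibres (homogeneity as in \cite{BHJ1}) is correct and is indeed the right way to pass between the $\mathrm{DF}$-formulation of uniform K-stability and the slope of the K-energy. A minor point: preservation of triviality under base change does not need the dichotomy theorem or the weakly cscK hypothesis; with $\mathrm{Aut}_0(X)$ discrete it follows directly from the injectivity part of Theorem \ref{Theorem summary}, since the associated ray of the base-changed configuration is a reparametrization of the original one.

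The one step that does not hold as written is the claim that the additive constant in the coercivity estimate ``can be taken uniform'' using convexity of $\mathrm{M}$ and $\mathrm{J}$. The function $t\mapsto \mathrm{M}(\rho(t))-\delta\,\mathrm{J}(\rho(t))$ is a difference of convex functions, so convexity yields no lower bound: from the slope inequality alone one only gets $\mathrm{M}(\rho(t))-\delta\,\mathrm{J}(\rho(t))\geq -e_\rho(t)$ with $e_\rho$ sublinear but possibly unbounded, and there is no visible uniform control over the whole family $S$ (for instance of the initial derivative of $\mathrm{M}$ along the ray). The statement should be read, consistently with the paper's formalism and with the $\yen$-formulation of Chen--Cheng \cite{ChenChengIII}, with coercivity along rays measured by asymptotic slopes, i.e. $\yen(\rho)\geq\delta\lim_{t\to+\infty}t^{-1}\mathrm{J}(\rho(t))$ for all $\rho\in S$; with that reading the problematic step disappears and your base-change argument already closes the equivalence (alternatively, one obtains a ray-dependent constant after shrinking $\delta$, but a uniform constant is neither delivered by your argument nor needed).
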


\begin{proof} [Proof of Theorem \ref{Theorem polarized equivalence}]
This is an immediate consequence of Theorem \ref{Theorem dichotomy main}. Indeed, under the stated hypotheses the K-polystable locus is non-empty, by results of \cite{Berman}, so $(X,L)$ is K-polystable if and only if it is geodesically K-polystable, even $c_1(L)$ does not itself admit a cscK metric. 
Finally, K-polystability trivially implies equivariant K-polystability, so also the equivariantly K-polystable locus is non-empty. In the same way as above, this proves $(2)$. 
\end{proof}

\noindent A reformulation of the above Theorem \ref{Theorem polarized equivalence} is that, on weakly cscK polarized manifolds, a ray compatible with a test configuration is induced by a holomorphic vector field precisely if the test configuration is a product: 

\begin{thm} \label{Theorem polarized equivalence cor} 
Suppose that $(X,L)$ is a polarized weakly cscK manifold. Let $(\mathcal{X},\mathcal{L})$ be a relatively K\"ahler test configuration for $(X,L)$ with compatible subgeodesic ray $(\varphi_t) \in \mathrm{R}(\mathcal{X},\mathcal{L})$. Then $\mathcal{X} \simeq X \times \mathbb{C}$ if and only if  $(\varphi_t)_{t \geq 0}$ is induced by a holomorphic vector field on $X$.
\end{thm}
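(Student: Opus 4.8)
The plan is to read this statement as the per-test-configuration, \emph{geometric} form of Theorem \ref{Theorem polarized equivalence}$(1)$, and to deduce it from Theorem \ref{Theorem dichotomy} (equivalently Theorem \ref{Theorem dichotomy not intro}). The first thing to check is that the K-polystable locus of $X$ is non-empty: since $(X,L)$ is a \emph{polarized} weakly cscK manifold, its cscK locus is non-empty and, on a polarized manifold, a cscK polarization is K-polystable by \cite{Berman, BDL}; hence the K-polystable locus $\neq \emptyset$ and all the hypotheses of Theorem \ref{Theorem dichotomy not intro} are in force. Applying that theorem to the cohomological test configuration $(\mathcal{X}, c_1(\mathcal{L}))$ for $(X, c_1(L))$ gives that $\mathcal{X}_{\pi^{-1}(\mathbb{C})} \simeq X \times \mathbb{C}$ if and only if the geodesic ray associated to $(\mathcal{X},\mathcal{L})$ is induced by a holomorphic vector field on $X$; for polarized manifolds test configurations are taken over $\mathbb{C}$, so the left-hand condition is exactly $\mathcal{X} \simeq X \times \mathbb{C}$. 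To replace ``the associated geodesic ray'' by ``the given $(\varphi_t) \in \mathrm{R}(\mathcal{X},\mathcal{L})$'' one uses that the geodesic ray compatible with $(\mathcal{X},\mathcal{L})$ is unique up to an additive normalising constant (Section \ref{Section preliminary rays} and \cite{SD1, SD2}); being a geodesic ray induced by a holomorphic vector field is unaffected by such a constant, so the property is intrinsic to $(\mathcal{X},\mathcal{L})$ and the theorem follows.

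It is also worth recording the two implications directly, mirroring the proof of Theorem \ref{Theorem dichotomy not intro}. If $\mathcal{X} \simeq X \times \mathbb{C}$ then $(\mathcal{X},\mathcal{L})$ is a product test configuration, induced by a $\mathbb{C}^*$-action on $(X,L)$ whose generator $V$ is a real holomorphic Hamiltonian vector field (it lifts to $L$, cf.\ \cite[Lemma 12]{Donaldsontoric}), and the compatible geodesic ray is the orbit $\varphi_t = \exp(tJV).\varphi_0$, which is induced by $V$ in the sense of Definition \ref{Definition induced vector field geodesic}. Conversely, if $(\varphi_t)$ is induced by a holomorphic vector field $V$ then $(\mathcal{X},\mathcal{L})$ is a geodesic product over $c_1(L)$; choosing $\beta$ in the (non-empty) K-polystable locus and $\lambda > 0$ with $\lambda\beta - c_1(L) \in \mathcal{C}_X$, Lemma \ref{Lemma main} produces a relatively K\"ahler test configuration $(\mathcal{X},\mathcal{B})$ for $(X,\lambda\beta)$ with the \emph{same} total space and still compatible with $(\varphi_t)$. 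Since $\lambda\beta$ is K-polystable it lies in $\mathcal{C}_F$, so $\mathrm{DF}(\mathcal{X},\mathcal{B}) = \mathrm{Fut}_{\lambda\beta}(X,V) = 0$ by Proposition \ref{Prop changing class for products}; if $\mathcal{X}$ were not a product this would contradict K-polystability of $(X,\lambda\beta)$, and therefore $\mathcal{X} \simeq X \times \mathbb{C}$.

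I expect the main obstacle to lie in this last bookkeeping step rather than in any new analytic input: one must take care that $c_1(L)$ itself need \emph{not} lie in the Futaki-vanishing cone $\mathcal{C}_F$, which is precisely why the rescaling by $\lambda$ together with Lemma \ref{Lemma main} (preserving the total space \emph{and} the compatible ray) cannot be avoided; and one must justify that identifying ``the'' associated geodesic ray with the given $(\varphi_t)$ is legitimate, which rests on the uniqueness and injectivity statements of \cite{SD2} recalled in Section \ref{Section preliminary rays}. Once these two points are handled the argument is purely formal, being a transcription of Theorem \ref{Theorem dichotomy} into the language of total spaces of test configurations.
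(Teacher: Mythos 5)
Your proposal is correct and follows essentially the same route as the paper: the paper presents this theorem as a reformulation of Theorem \ref{Theorem polarized equivalence}, which is deduced from Theorem \ref{Theorem dichotomy main}/\ref{Theorem dichotomy not intro} precisely by noting that the K-polystable locus is non-empty for polarized weakly cscK manifolds via \cite{Berman, BDL}. Your additional unwinding of the argument through Lemma \ref{Lemma main} and Proposition \ref{Prop changing class for products} is exactly the machinery the paper uses to prove Theorem \ref{Theorem dichotomy not intro}, so nothing genuinely new or divergent is involved.
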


\noindent This extends a result of \cite{Berman} from the case of cscK manifolds, to the larger class of weakly cscK manifolds. 

\subsection{An extended injectivity lemma}

Furthermore, it is worth noting that the above techniques can be used to extend the injectivity lemma (see Theorem \ref{Theorem summary}, part (2)) from the setting of a fixed underlying K\"ahler class, to the setting of different underlying K\"ahler classes $\alpha, \beta \in \mathcal{C}_X$. Such injectivity type results were in \cite{SD2} a key tool in proving equivariant K-polystability, geodesic K-polystability, and K-polystability whenever the automorphism group is discrete. It is also of independent interest. 

In order to state the result, recall the assignment $\mathrm{R}: (\mathcal{X}, \mathcal{A}) \mapsto \left[(\varphi_t)^{(\mathcal{X},\mathcal{A})}\right]$ from Section \ref{Section preliminary rays}. 
We then have the following: 

\begin{thm} \label{Theorem extended injectivity not intro}
Suppose that $\alpha := [\omega]$ and $\beta := [\theta]$ are K\"ahler classes on $X$ and let $(\mathcal{X},\mathcal{A})$ and $(\mathcal{Y}, \mathcal{B})$ be relatively K\"ahler test configurations for $(X,\alpha)$ and $(X,\beta)$ respectively. Suppose that $$\mathrm{R}(\mathcal{X},\mathcal{A}) \cap \mathrm{R}(\mathcal{Y}, \mathcal{B}) \neq \emptyset.$$ 
Then the canonical $\mathbb{C}^*$-equivariant isomorphism $\mathcal{X} \setminus \mathcal{X}_0 \rightarrow \mathcal{Y} \setminus \mathcal{Y}_0$  extends to an isomorphism $\mathcal{X} \rightarrow \mathcal{Y}$. 
\end{thm}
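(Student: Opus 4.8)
The plan is to reduce the statement, by a change of the underlying K\"ahler class, to the original injectivity lemma of \cite[Theorem 1.8]{SD2} (recalled above as Theorem \ref{Theorem summary}(2)), which concerns two test configurations for \emph{the same} polarization. The obstacle is of course that $(\mathcal{X},\mathcal{A})$ lives over $\alpha$ while $(\mathcal{Y},\mathcal{B})$ lives over $\beta$, so we must first move one of them to the other's class \emph{without disturbing the compatible ray}. This is exactly what Lemma \ref{Lemma main} provides, once we arrange a common dominating model.

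First I would fix a subgeodesic ray $\rho(t) \in \mathrm{R}(\mathcal{X},\mathcal{A}) \cap \mathrm{R}(\mathcal{Y},\mathcal{B})$, which exists by hypothesis, and pass to a smooth test configuration $\mathcal{Z}$ for $X$ that dominates both $\mathcal{X}$ and $\mathcal{Y}$ (and the product $X \times \mathbb{P}^1$) via $\mathbb{C}^*$-equivariant bimeromorphic morphisms; this is possible by taking the normalized graph of the canonical birational map $\mathcal{X} \dashrightarrow \mathcal{Y}$ over $X \times \mathbb{P}^1$ and resolving. Pulling $\mathcal{A}$ and $\mathcal{B}$ back to $\mathcal{Z}$, compatibility of $\rho(t)$ with $(\mathcal{X},\mathcal{A})$ and with $(\mathcal{Y},\mathcal{B})$ translates (via \cite[Proposition 3.10]{SD1}, exactly as in the proof of Lemma \ref{Lemma main}) into the statement that the $S^1$-invariant potential $\Phi$ of $\rho(t)$, corrected by the respective Green functions $\psi_{D_{\mathcal{A}}}$ and $\psi_{D_{\mathcal{B}}}$ of the central-fiber divisors, extends as a locally bounded (pulled-back relatively K\"ahler)-psh function across the central fiber of $\mathcal{Z}$.

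Next I would apply Lemma \ref{Lemma main}: choose $\lambda>0$ with $\lambda\beta-\alpha \in \mathcal{C}_X$ (rescaling $\mathcal{B}$ and $\rho(t)$ accordingly, which changes neither the isomorphism type of $\mathcal{Y}$ nor the conclusion), and produce a relatively K\"ahler class $\mathcal{A}'$ on $\mathcal{X}$ with the \emph{same} total space $\mathcal{X}$, making $(\mathcal{X},\mathcal{A}')$ a test configuration for $(X,\lambda\beta)$, with the same divisor $[D_{\mathcal{A}}]$ on the dominating model and hence still $C^\infty$- (or $L^\infty$-) compatible with the same $\rho(t)$. Now $(\mathcal{X},\mathcal{A}')$ and $(\mathcal{Y},\lambda\mathcal{B})$ are both relatively K\"ahler test configurations for the \emph{single} class $\lambda\beta$, and $\rho(t) \in \mathrm{R}(\mathcal{X},\mathcal{A}') \cap \mathrm{R}(\mathcal{Y},\lambda\mathcal{B})$, so the hypothesis of Theorem \ref{Theorem summary}(2) is met: the canonical $\mathbb{C}^*$-equivariant isomorphism $\mathcal{X}\setminus\mathcal{X}_0 \to \mathcal{Y}\setminus\mathcal{Y}_0$ extends to an isomorphism $\mathcal{X}\to\mathcal{Y}$. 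Since the total space of $(\mathcal{X},\mathcal{A}')$ is literally $\mathcal{X}$, this is the desired conclusion for the original pair.

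The main obstacle I anticipate is purely bookkeeping but genuinely delicate: one must check that the compatibility of $\rho(t)$ is preserved verbatim when the class is changed by Lemma \ref{Lemma main} --- i.e. that the relevant Green function / singularity type datum on the common dominating model $\mathcal{Z}$ is unchanged, so that the \emph{same} potential $\Phi$ witnesses compatibility with both $(\mathcal{X},\mathcal{A}')$ and $(\mathcal{Y},\lambda\mathcal{B})$. This is exactly the content of the last paragraph of the proof of Lemma \ref{Lemma main} (``the same $\mu$ and $[D]$ as before''), so no new analytic input is needed beyond tracking divisors carefully through the dominating model; the rescaling by $\lambda$ is harmless since it multiplies potentials and classes by a positive constant and commutes with all the $\mathrm{PSH}$ conditions. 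Once this is in place, the reduction to Theorem \ref{Theorem summary}(2) is immediate.
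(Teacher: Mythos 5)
Your overall strategy is exactly the paper's: use Lemma \ref{Lemma main} to move one of the two test configurations to the other's K\"ahler class without changing its total space or the compatible ray, and then invoke the fixed-class injectivity lemma \cite[Theorem 1.8]{SD2} (Theorem \ref{Theorem summary}(2)). The paper does precisely this, simply picking $(\mathcal{X},\mathcal{A}_{\beta})$ as in Lemma \ref{Lemma main} and applying the injectivity lemma to $(\mathcal{X},\mathcal{A}_{\beta})$ and $(\mathcal{Y},\mathcal{B})$ over the class $\beta$; your extra care about the positivity hypothesis $\beta-\alpha\in\mathcal{C}_X$ of Lemma \ref{Lemma main} (which the paper glosses over) is a good instinct, and the common dominating model $\mathcal{Z}$ is harmless though not needed.

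However, the specific rescaling device you use to handle that positivity issue contains a genuine error. You claim that after choosing $\lambda>0$ with $\lambda\beta-\alpha\in\mathcal{C}_X$ one has $\rho(t)\in\mathrm{R}(\mathcal{X},\mathcal{A}')\cap\mathrm{R}(\mathcal{Y},\lambda\mathcal{B})$. The second membership is false in general: on a dominating model one has, by \cite[Proposition 3.10]{SD1}, $\rho^*(\lambda\mathcal{B})=\mu^*p_1^*(\lambda\beta)+[\lambda D_{\mathcal{B}}]$, so the Green-function correction is $\lambda\psi_{D_{\mathcal{B}}}$, and $\Phi\circ\mu+\lambda\psi_{D_{\mathcal{B}}}=(\Phi\circ\mu+\psi_{D_{\mathcal{B}}})+(\lambda-1)\psi_{D_{\mathcal{B}}}$ is \emph{not} locally bounded (nor smooth) across the central fiber unless $\lambda=1$ or $D_{\mathcal{B}}=0$; the ray compatible with $(\mathcal{Y},\lambda\mathcal{B})$ is $\lambda\rho(t)$, not $\rho(t)$. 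And if you instead ``rescale $\rho(t)$ accordingly'' as your parenthetical suggests, the rescaled ray $\lambda\rho(t)$ is no longer the ray that Lemma \ref{Lemma main} guarantees to be compatible with $(\mathcal{X},\mathcal{A}')$, so the hypothesis of Theorem \ref{Theorem summary}(2) is not verified either way. The repair is easy and stays within your scheme: do not scale the class multiplicatively, but add a pulled-back horizontal class on the $\mathcal{Y}$ side as well, i.e.\ apply Lemma \ref{Lemma main} to $(\mathcal{Y},\mathcal{B})$ with target class $\lambda\beta$ (the difference $(\lambda-1)\beta$ is K\"ahler for $\lambda>1$), obtaining $(\mathcal{Y},\mathcal{B}')$ with the same divisor $D_{\mathcal{B}}$ and hence still compatible with $\rho(t)$; alternatively, move \emph{both} configurations to a common class such as $\gamma:=\alpha+\beta$, noting that $\rho(t)\in\mathrm{PSH}(X,\omega)\cap\mathrm{PSH}(X,\theta)$ is automatically a subgeodesic for $\gamma$ and remains compatible since the divisors are unchanged. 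With that modification your reduction to \cite[Theorem 1.8]{SD2} goes through and coincides with the paper's argument.
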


\begin{rem} \emph{The hypothesis $\mathrm{R}(\mathcal{X},\mathcal{A}) \cap \mathrm{R}(\mathcal{Y}, \mathcal{B}) \neq \emptyset$ here means that there is a subgeodesic ray $\rho(t) \in \mathrm{PSH}(X,\omega) \cap \mathrm{PSH}(X,\theta)$ which is compatible with two relatively K\"ahler test configurations $(\mathcal{X},\mathcal{A})$ and $(\mathcal{Y}, \mathcal{B})$ for $(X,\alpha)$ and $(X,\beta)$ respectively. } 
\end{rem}

\begin{proof}
[Proof of Theorem \ref{Theorem extended injectivity not intro}]
The idea of the proof is to extend \cite[Theorem 1.8]{SD2} using the key Lemma \ref{Lemma main} in order to control the change of the underlying K\"ahler class. Indeed, first fix K\"ahler forms $\omega_{\alpha}$ and $\omega_{\beta}$ such that $[\omega_{\alpha}] = \alpha$ and $[\omega_{\beta}] = \beta$. By hypothesis $\mathrm{R}(\mathcal{X},\mathcal{A}) \cap \mathrm{R}(\mathcal{Y}, \mathcal{B}) \neq \emptyset$ there is a subgeodesic ray $\rho(t) \in \mathrm{PSH}(X,\omega_{\alpha}) \cap \mathrm{PSH}(X,\omega_{\beta})$ which is compatible with two relatively K\"ahler test configurations $(\mathcal{X},\mathcal{A})$ and $(\mathcal{Y}, \mathcal{B})$ for $(X,\alpha)$ and $(X,\beta)$ respectively. 
Now pick $(\mathcal{X},\mathcal{A}_{\beta})$ as in Lemma \ref{Lemma main}. Then $(\mathcal{X},\mathcal{A}_{\beta})$ and $(\mathcal{Y},\mathcal{B})$ are relatively K\"ahler test configurations for $(X,\beta)$, both compatible with the same subgeodesic ray $\rho(t)$. By applying the injectivity lemma \cite[Theorem 1.8]{SD2} we then finally see that the canonical $\mathbb{C}^*$-equivariant isomorphism $\mathcal{X} \setminus \mathcal{X}_0 \rightarrow \mathcal{Y} \setminus \mathcal{Y}_0$  extends to an isomorphism $\mathcal{X} \rightarrow \mathcal{Y}$. This is what we wanted to prove.
\end{proof}

\subsection{Topology of the K-semistable and uniformly K-stable loci} \label{Section topology}

\noindent Finally, the techniques of variation of the underlying class in the K\"ahler cone immediately yield some basic information on the structure and topology of the K-semistable and uniformly K-stable loci in the K\"ahler cone. Here $(X,\alpha)$ is said to be uniformly K-stable if there is a $\delta > 0$ such that $\mathrm{DF}(\mathcal{X},\mathcal{A}) \geq \delta \mathrm{J}^{\mathrm{NA}}(\mathcal{X},\mathcal{A})$ for all relatively K\"ahler test configurations $(\mathcal{X}, \mathcal{A})$ for $(X,\alpha)$ that dominate $X \times \mathbb{P}^1$ via a morphism $\mu: \mathcal{X} \rightarrow X \times \mathbb{P}^1$ (testing for these is enough by \cite{Zakthesis}). For such test configurations the norm $\mathrm{J}^{\mathrm{NA}}(\mathcal{X},\mathcal{A})$ is defined as the intersection number
$$
\mathrm{J}^{\mathrm{NA}}(\mathcal{X},\mathcal{A}) := (\mu^*p_1^*\alpha \cdot \mathcal{A}) - \frac{(\mathcal{A}^{n+1})}{n+1}.
$$
computed on $\mathcal{X}$ (as before $p_1: X \times \mathbb{P}^1 \rightarrow X$ denotes the first projection). We refer to \cite{DervanRoss,SD1,SD2, Zakthesis} for details. 

The following first result should be compared to \cite[Theorem G]{Antoniso}: 

\begin{thm} \label{Theorem Kss closed}
The K-semistable locus is closed in Euclidean topology in the K\"ahler cone of $X$.
\end{thm}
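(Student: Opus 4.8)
The plan is to show that the complement of the K-semistable locus is open in the Euclidean topology on $\mathcal{C}_X$, using the continuity result in Theorem \ref{Theorem convex combination}(2) together with the fact that relatively K\"ahler classes can be deformed within a fixed total space. Suppose $\alpha_0 \in \mathcal{C}_X$ is \emph{not} K-semistable. Then by definition there is a relatively K\"ahler test configuration $(\mathcal{X},\mathcal{A})$ for $(X,\alpha_0)$ with $\mathrm{DF}(\mathcal{X},\mathcal{A}) < 0$; after passing to a resolution we may assume $\mathcal{X}$ is smooth and dominating, so that $\rho^*\mathcal{A} = \mu^*p_1^*\alpha_0 + [D]$ for some $\mathbb{R}$-divisor $D$ supported on the central fiber. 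The first step is to produce, for every $\beta$ in a small neighbourhood of $\alpha_0$, a relatively K\"ahler test configuration $(\mathcal{X},\mathcal{A}_\beta)$ for $(X,\beta)$ on the \emph{same} total space. This is done exactly as in Lemma \ref{Lemma main}: set $\mathcal{A}_\beta := \mathcal{A} + \eta_\beta$ where $\eta_\beta$ is the class on $\mathcal{X}$ with $\rho^*\eta_\beta = \mu^*p_1^*(\beta - \alpha_0)$. Since being relatively K\"ahler is an open condition on the class and $\mathcal{A}$ is relatively K\"ahler, $\mathcal{A}_\beta$ is again relatively K\"ahler for all $\beta$ sufficiently close to $\alpha_0$ (one does not even need $\beta - \alpha_0$ to be K\"ahler here, only small, which is why the argument localizes).

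The second step is to apply continuity. Writing $\beta = \alpha_{0} + (\beta-\alpha_{0})$ and interpolating linearly, the map $\beta \mapsto \mathrm{DF}(\mathcal{X},\mathcal{A}_\beta)$ is, by the intersection-theoretic formula for $\mathrm{DF}$, a polynomial (of degree at most $n+1$) in the coordinates of $\beta$, hence continuous; this is precisely the content of Theorem \ref{Theorem convex combination}(2) applied along the segment from $\alpha_0$ to $\beta$. Since $\mathrm{DF}(\mathcal{X},\mathcal{A}_{\alpha_0}) = \mathrm{DF}(\mathcal{X},\mathcal{A}) < 0$, there is a Euclidean neighbourhood $U \ni \alpha_0$ in $\mathcal{C}_X$ such that $\mathrm{DF}(\mathcal{X},\mathcal{A}_\beta) < 0$ for all $\beta \in U$. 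For each such $\beta$, the pair $(\mathcal{X},\mathcal{A}_\beta)$ is a relatively K\"ahler test configuration for $(X,\beta)$ with negative Donaldson-Futaki invariant, so $(X,\beta)$ is not K-semistable. Hence $U$ lies in the complement of the K-semistable locus, which is therefore open; equivalently the K-semistable locus is closed in $\mathcal{C}_X$.

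I expect the only genuinely delicate point to be the uniformity in the first step — namely that a \emph{single} destabilizing test configuration $(\mathcal{X},\mathcal{A})$ for $\alpha_0$ furnishes destabilizers for a whole neighbourhood, rather than having to quantify over all test configurations simultaneously. This is handled cleanly because relative K\"ahlerness of $\mathcal{A}_\beta = \mathcal{A} + \eta_\beta$ is an open condition and $\eta_\beta \to 0$ as $\beta \to \alpha_0$, so no compactness or diagonal argument over the space of test configurations is needed. (By contrast, proving the K-semistable locus is \emph{open} would require such a uniform argument and is not claimed.) Everything else is a direct invocation of Lemma \ref{Lemma main} and Theorem \ref{Theorem convex combination}, so the proof is short.
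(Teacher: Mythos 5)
Your proof is correct and is essentially the paper's own argument: fix a single destabilizing smooth and dominating test configuration, deform its class by $\mu^*p_1^*(\beta-\alpha)$ on the same total space, use openness of the relatively K\"ahler condition and the intersection-theoretic (polynomial) continuity of $\mathrm{DF}$ to obtain destabilizers for all nearby $\beta$, hence openness of the complement. The only step the paper treats more carefully is the reduction to smooth, dominating, relatively K\"ahler destabilizers, which it gets by invoking \cite[Proposition 3.12]{SD1} rather than by ``passing to a resolution'' (the pullback of a relatively K\"ahler class to a resolution is in general only relatively nef, so your resolution step needs this citation or a perturbation argument to be fully justified).
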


\begin{proof}
By \cite[Proposition 3.12]{SD1} it suffices to test K-semistability for relatively K\"ahler test configurations $(\mathcal{X}, \mathcal{A})$ for $(X,\alpha)$ that are smooth and dominating, i.e. there is a morphism $\mu: \mathcal{X} \rightarrow X \times \mathbb{P}^1$ such that $p_1 \circ \mu = \pi: \mathcal{X} \rightarrow \mathbb{P}^1$.
Hence, we may fix any given relatively K\"ahler smooth and dominating test configuration $\mathcal{X}$ for $X$.
By \cite[Proposition 3.10]{SD1} we moreover have
$$
\mathcal{A} = \mu^*p_1^*\alpha + [D]
$$
for some $\mathbb{R}$-divisor $D$ on $\mathcal{X}$ supported on the central fiber $\mathcal{X}_0$. Since $\mathcal{A}$ is K\"ahler, there is an open neighbourhood $U_{\alpha} \subset \mathcal{C}_X$ of $\alpha$ such that $\mathcal{A}_{\beta} := \mu^*p_1^*\beta + [D] \in \mathcal{C}_X$ for every $\beta \in U_{\alpha}$.
In view of the intersection theoretic interpretation of the Donaldson-Futaki invariant, note that the map
$$
U_{\alpha} \ni \beta \mapsto \mathrm{DF}(\mathcal{X},\mathcal{A}_{\beta})
$$
is continuous. As a consequence, suppose that $\alpha \not \in$ K-semistable locus. Then there exists a smooth and dominating test configuration $(\mathcal{X}, \mathcal{A}_{\alpha})$ as above such that $\mathrm{DF}(\mathcal{X},\mathcal{A}_{\alpha}) < 0$. But by continuity there exists an open neighbourhood $V_{\alpha} \subset U_{\alpha}$ such that $\mathrm{DF}(\mathcal{X}, \mathcal{A}_{\beta}) < 0$ for each $\beta \in V_{\alpha}$. In other words, if $\alpha \not \in $ K-semistable locus, then there is an open neighbourhood satisfying $V_{\alpha} \subset \mathcal{C}_X \setminus \textrm{K-semistable locus}.$ \noindent Hence the K-semistable locus is open in the K\"ahler cone of $X$, which is what we wanted to prove. 
\end{proof}

\noindent Due to the fact that the cscK locus is open (see \cite{LeBrunSimanca}) a consequence of this is that K-semistability is not equivalent to existence of cscK metrics. This has been known previously by means of counterexamples (see e.g. \cite{Tian, Keller}). Nonetheless, this yields a complementary perspective on this question. From this, we we also record the following corollary of independent interest: 

\begin{cor} 
The inclusion $\textrm{cscK locus} \subset \textrm{K-semistable locus}$ is strict whenever the K-semistable locus $\neq \emptyset, \mathcal{C}_X$. 
\end{cor}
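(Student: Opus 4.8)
The plan is to prove this by a short topological dichotomy argument, playing off the \emph{openness} of the cscK locus against the \emph{closedness} of the K-semistable locus. Throughout, recall that the inclusion $\textrm{cscK locus} \subseteq \textrm{K-semistable locus}$ always holds (a cscK class is K-semistable, cf. \cite{SD1, BDL}), so the only point at issue is strictness. I would argue by contradiction: assume the two loci coincide, write $S$ for the common set, and assume moreover that $\emptyset \neq S \neq \mathcal{C}_X$.

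The first ingredient is Theorem \ref{Theorem Kss closed}: since $S$ equals the K-semistable locus, $S$ is closed in the Euclidean topology of $\mathcal{C}_X$. The second ingredient is the openness coming from \cite{LeBrunSimanca}: the cscK locus can be written as $U \cap \mathcal{C}_F$ for some open $U \subseteq \mathcal{C}_X$, where $\mathcal{C}_F$ is the locus of Kähler classes with identically vanishing Futaki invariant. One also uses here that the K-semistable locus is contained in $\mathcal{C}_F$: testing $\mathrm{DF} \geq 0$ against the product test configurations attached to a holomorphic vector field $V$ and to $-V$ forces the classical Futaki invariant of $V$ to vanish (for $V$ generating a $\mathbb{C}^*$-action, hence on all of $\mathfrak{aut}_0(X)$). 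Thus $S = U \cap \mathcal{C}_F$ is relatively open in $\mathcal{C}_F$, and, being closed in $\mathcal{C}_X$, it is relatively closed in $\mathcal{C}_F$; that is, $S$ is clopen in $\mathcal{C}_F$.

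The third step is to conclude. When $\mathrm{Aut}_0(X)$ is discrete — or more generally whenever the Futaki invariant vanishes identically on $\mathcal{C}_X$ — one has $\mathcal{C}_F = \mathcal{C}_X$, which is convex and hence connected, so $S$ clopen forces $S = \emptyset$ or $S = \mathcal{C}_X$, contradicting the hypothesis. Combined with the inclusions above, this yields $\textrm{cscK locus} \subsetneq \textrm{K-semistable locus}$, which is what we want; as a byproduct this also recovers the assertion that the K-semistable and cscK loci can only coincide when they are both $\emptyset$ or $\mathcal{C}_X$.

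The step I expect to be the main obstacle is the conclusion when $\mathrm{Aut}_0(X) \neq \emptyset$: the openness input is available only \emph{relative to} $\mathcal{C}_F$, whereas the closedness from Theorem \ref{Theorem Kss closed} lives in the full cone $\mathcal{C}_X$, and $\mathcal{C}_F$ need not be connected (it is cut out by vanishing of intersection-theoretic polynomials in $\alpha$, of degree $n$ in general). One natural way to close this gap would be to work component by component: $S$ nonempty and clopen in $\mathcal{C}_F$ contains the connected component $Z$ of $\mathcal{C}_F$ through any point of $S$, and one must then rule out that such a $Z$ sits as a proper closed piece bordered by non-K-semistable classes, which should follow from continuity of $\mathrm{DF}$ along segments of classes (Theorem \ref{Theorem convex combination}) together with $S$ being closed in $\mathcal{C}_X$. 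Making this last point fully rigorous is the delicate part; everything else is a direct combination of Theorem \ref{Theorem Kss closed} with the LeBrun--Simanca openness theorem.
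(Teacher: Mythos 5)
Your argument is essentially the paper's: the corollary is obtained there precisely by playing the closedness of the K-semistable locus (Theorem \ref{Theorem Kss closed}) against the LeBrun--Simanca openness of the cscK locus \cite{LeBrunSimanca} and the connectedness of $\mathcal{C}_X$, exactly the dichotomy you set up. The delicate point you flag --- that openness is only relative to the Futaki vanishing locus $\mathcal{C}_F$ when $\mathrm{Aut}_0(X)$ is nontrivial, so the clopen argument closes immediately only when $\mathcal{C}_F = \mathcal{C}_X$ --- is not addressed in the paper either, which simply asserts that the cscK locus is open and concludes; so your write-up is, if anything, more careful than the argument it is being compared against.
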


\noindent This also yields concrete examples of manifolds with ``many'' strictly semistable (i.e. K-semistable but not K-stable) K\"ahler classes:

\begin{ex} \emph{(Strictly semistable examples)}
\emph{Consider the Del Pezzo surface $X = \mathrm{Bl}_{p_1, \dots, p_8} \mathbb{P}^2$ to be the blowup of $\mathbb{P}^2$ in 8 points $p_1, \dots, p_8$ in general position. First of all, it is well known that $X$ is K\"ahler-Einstein, so $(X,-K_X)$ is K-stable by \cite{Tian}.
In other words, the K-stable locus, thus also the K-semistable locus, is non-empty. On the other hand, it was shown in \cite{RossThomas} that $X$ admits K-unstable polarizations, so the K-semistable locus is $\neq \mathcal{C}_X$. Since both the K-stable locus and the K-semistable locus are $\neq \emptyset, \mathcal{C}_X$, whereas the former is open and the latter is closed in the Euclidean topology in $\mathcal{C}_X$, it follows that the strictly K-semistable locus is non-empty, i.e. the set
$
\textrm{K-semistable locus} \setminus \textrm{K-stable locus} \neq \emptyset.
$ 
This gives a new method of answering the question of existence of strictly K-semistable classes. }
\end{ex}

\subsubsection{The uniformly K-stable locus}

Now suppose that $(X,\omega)$ is a compact K\"ahler manifold with discrete automorphism group, i.e $\mathrm{Aut}_0(X) = \emptyset$. Then similar arguments can also be made for the uniformly K-stable locus in the K\"ahler cone of $X$. To see this, we associate to each K\"ahler class $\alpha \in \mathcal{C}_X$ the finite real number
$$
\Delta(\alpha) := \sup \{ \delta > 0 \; \vert \mathrm{DF}(\mathcal{X},\mathcal{A}) \geq \delta \mathrm{J}^{\mathrm{NA}}(\mathcal{X},\mathcal{A})\}
$$
where the condition above should hold for all relatively K\"ahler test configurations $(\mathcal{X},\mathcal{A})$ for $(X,\alpha)$. Moreover, introduce the sets
$$
\mathcal{U}_{\delta} := \{ \alpha \in \mathcal{C}_X \; \vert \; \Delta(\alpha) \geq \delta \}
$$
\noindent We then make the following observation:

\begin{thm} \label{Theorem Uks closed}
The uniformly K-stable locus can be written as a union
$$
\mathcal{U} := \bigcup_{\delta > 0} \mathcal{U}_{\delta},
$$
where each $\mathcal{U}_{\delta}$ is closed in the Euclidean topology in the K\"ahler cone. 
\end{thm}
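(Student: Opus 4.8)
The plan is to mimic the proof of Theorem~\ref{Theorem Kss closed}, testing the sign of the ``slope defect'' $\mathrm{DF}(\mathcal{X},\mathcal{A}) - \delta\,\mathrm{J}^{\mathrm{NA}}(\mathcal{X},\mathcal{A})$ in place of the sign of $\mathrm{DF}(\mathcal{X},\mathcal{A})$, while keeping track of the supremum defining $\Delta(\alpha)$. First I would record that, for fixed $\alpha$, the set $\{\delta>0 : \mathrm{DF}(\mathcal{X},\mathcal{A})\ge \delta\,\mathrm{J}^{\mathrm{NA}}(\mathcal{X},\mathcal{A})\ \text{for all relatively K\"ahler }(\mathcal{X},\mathcal{A})\text{ dominating }X\times\mathbb{P}^1\}$ is downward closed, since $\mathrm{J}^{\mathrm{NA}}\ge 0$; hence it is an interval with left endpoint $0$ and supremum $\Delta(\alpha)$. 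Consequently $(X,\alpha)$ is uniformly K-stable if and only if $\Delta(\alpha)>0$, i.e.\ if and only if $\alpha\in\mathcal{U}_\delta$ for some $\delta>0$, which already gives the decomposition $\mathcal{U}=\bigcup_{\delta>0}\mathcal{U}_\delta$.

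The main step is then to show that each $\mathcal{U}_\delta$ is closed, equivalently that its complement $\{\alpha\in\mathcal{C}_X : \Delta(\alpha)<\delta\}$ is open. Fix $\alpha$ with $\Delta(\alpha)<\delta$ and pick $\delta'$ with $\Delta(\alpha)<\delta'<\delta$. Since $\delta'$ exceeds the supremum $\Delta(\alpha)$, there is a relatively K\"ahler test configuration for $(X,\alpha)$ for which the inequality $\mathrm{DF}\ge\delta'\,\mathrm{J}^{\mathrm{NA}}$ fails; resolving and using that $\mathrm{DF}$ and $\mathrm{J}^{\mathrm{NA}}$ are invariant under pullback to a higher model (projection formula; see \cite[Proposition 3.12]{SD1} and \cite{Zakthesis}), I may take it smooth and dominating, say $\mathcal{A}=\mu^*p_1^*\alpha+[D]$ with $D$ an $\mathbb{R}$-divisor supported on $\mathcal{X}_0$, so that $\mathrm{DF}(\mathcal{X},\mathcal{A})-\delta'\,\mathrm{J}^{\mathrm{NA}}(\mathcal{X},\mathcal{A})<0$. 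As in the proof of Theorem~\ref{Theorem Kss closed}, the relatively K\"ahler cone of $\mathcal{X}$ is open, so $\mathcal{A}_\beta:=\mu^*p_1^*\beta+[D]$ remains relatively K\"ahler for all $\beta$ in a neighbourhood $U_\alpha$ of $\alpha$ in $\mathcal{C}_X$; and since $\mathrm{DF}$ and $\mathrm{J}^{\mathrm{NA}}$ are intersection numbers, $\beta\mapsto \mathrm{DF}(\mathcal{X},\mathcal{A}_\beta)-\delta'\,\mathrm{J}^{\mathrm{NA}}(\mathcal{X},\mathcal{A}_\beta)$ is polynomial, hence continuous, hence remains negative on a smaller neighbourhood $V_\alpha\subset U_\alpha$. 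For $\beta\in V_\alpha$ the test configuration $(\mathcal{X},\mathcal{A}_\beta)$ then certifies $\Delta(\beta)\le\delta'<\delta$, so $V_\alpha$ avoids $\mathcal{U}_\delta$; thus $\mathcal{U}_\delta$ is closed, and taking $\delta=0$ recovers Theorem~\ref{Theorem Kss closed}.

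The only genuine point of care — more a bookkeeping subtlety than a real obstacle — is the non-attainment of the supremum defining $\Delta(\alpha)$: the destabilizing test configuration at $\alpha$ witnesses the strict inequality only for some $\delta'$ \emph{strictly} below $\delta$, and it is precisely this strictness that propagates, via $\Delta(\beta)\le\delta'<\delta$, to the conclusion $\beta\notin\mathcal{U}_\delta$. One should also verify that the inequality $\mathrm{DF}(\mathcal{X},\mathcal{A}_\beta)<\delta'\,\mathrm{J}^{\mathrm{NA}}(\mathcal{X},\mathcal{A}_\beta)$ really does force $\Delta(\beta)\le\delta'$, which again uses only $\mathrm{J}^{\mathrm{NA}}\ge 0$ (so that the violated $\delta'$-inequality violates every $\eta$-inequality with $\eta>\delta'$). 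With these two elementary checks in place, the argument is word for word the proof of Theorem~\ref{Theorem Kss closed}.
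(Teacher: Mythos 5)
Your proposal is correct and follows essentially the same route as the paper: reduce to smooth dominating test configurations, write $\mathcal{A}_\beta=\mu^*p_1^*\beta+[D]$, and use that $\beta\mapsto \mathrm{DF}(\mathcal{X},\mathcal{A}_\beta)-\delta\,\mathrm{J}^{\mathrm{NA}}(\mathcal{X},\mathcal{A}_\beta)$ is a polynomial in intersection numbers, hence continuous, so the complement of $\mathcal{U}_\delta$ is open. Your extra care with the intermediate $\delta'$ (using only $\mathrm{J}^{\mathrm{NA}}\ge 0$ to get $\Delta(\beta)\le\delta'<\delta$) is a slightly more explicit handling of the non-attained supremum than the paper's direct use of $\delta$, but it is the same argument in substance.
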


\begin{proof}
The proof is analogous to the one in Theorem \ref{Theorem Kss closed}, but applied to $\mathrm{DF} - \delta \mathrm{J}^{\mathrm{NA}}$ instead. For the convenience of the reader we give the argument: 
By \cite[Proposition 3.2.20]{Zakthesis} it suffices to test uniform K-stability for relatively K\"ahler test configurations $(\mathcal{X}, \mathcal{A})$ for $(X,\alpha)$ that are smooth and dominating, i.e. there is a morphism $\mu: \mathcal{X} \rightarrow X \times \mathbb{P}^1$ such that $p_1 \circ \mu = \pi: \mathcal{X} \rightarrow \mathbb{P}^1$.
Hence, we may fix any given relatively K\"ahler smooth and dominating test configuration $\mathcal{X}$ for $X$.
As before, by \cite[Proposition 3.10]{SD1} we moreover have
$
\mathcal{A} = \mu^*p_1^*\alpha + [D]
$
for some $\mathbb{R}$-divisor $D$ on $\mathcal{X}$ supported on the central fiber $\mathcal{X}_0$. Since $\mathcal{A}$ is K\"ahler, there is an open neighbourhood $U_{\alpha} \subset \mathcal{C}_X$ of $\alpha$ such that $\mathcal{A}_{\beta} := \mu^*p_1^*\beta + [D] \in \mathcal{C}_X$ for every $\beta \in U_{\alpha}$.
In view of the intersection theoretic interpretation of both the Donaldson-Futaki invariant and the non-Archimedean $\mathrm{J}$-functional, note that the map
$$
U_{\alpha} \ni \beta \mapsto \mathrm{DF}(\mathcal{X},\mathcal{A}_{\beta}) - \delta \mathrm{J}^{\mathrm{NA}}(\mathcal{X},\mathcal{A}_{\beta})
$$
is continuous for each $\delta \in \mathbb{R}$. As a consequence, fix a $\delta \in \mathbb{R}$ and suppose that $\alpha \not \in \mathcal{U}_{\delta}$. Then, by definition, there exists a smooth and dominating test configuration $(\mathcal{X}, \mathcal{A}_{\alpha})$ as above such that $\mathrm{DF}(\mathcal{X},\mathcal{A}_{\alpha}) < \delta \mathrm{J}^{\mathrm{NA}}(\mathcal{X},\mathcal{A}_{\beta}$. But by continuity there exists an open neighbourhood $V_{\alpha} \subset U_{\alpha}$ such that $\mathrm{DF}(\mathcal{X}, \mathcal{A}_{\beta}) < \delta \mathrm{J}^{\mathrm{NA}}(\mathcal{X},\mathcal{A}_{\beta}$ for each $\beta \in V_{\alpha}$. In other words, if $\alpha \not \in \mathcal{U}_{\delta}$, then there is an open neighbourhood satisfying $V_{\alpha} \subset \mathcal{C}_X \setminus \mathcal{U}_{\delta}.$ \noindent Hence for each $\delta \in \mathbb{R}$, the set $\mathcal{U}_{\delta}$ is closed in the K\"ahler cone of $X$. Finally, it is clear that the uniformly K-stable locus can be written
$$
\mathcal{U} := \bigcup_{\delta > 0} \mathcal{U}_{\delta},
$$
completing the proof. 
\end{proof}

\begin{rem}
The K-semistable locus equals 
$$
\mathrm{Kss} = \bigcup_{\delta \geq 0} \mathcal{U}_{\delta} \; (= \mathcal{U}_0).
$$
so Theorem \ref{Theorem Kss closed} is a special case of Theorem \ref{Theorem Uks closed}. 
\end{rem}

\noindent We finally note the following result of independent interest, which is an immediate consequence of the proof of Theorem \ref{Theorem Uks closed}:

\begin{cor}
The stability threshold 
$$
\mathcal{C}_X \ni \alpha \mapsto \Delta(\alpha)
$$
is upper semicontinous. 
\end{cor}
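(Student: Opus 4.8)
The plan is to read off upper semicontinuity of $\alpha \mapsto \Delta(\alpha)$ directly from the closedness of the superlevel sets $\mathcal{U}_{\delta}$ established in (the proof of) Theorem~\ref{Theorem Uks closed}. Recall the standard characterization: a real-valued function $f$ on a topological space is upper semicontinuous if and only if the superlevel set $\{f \geq c\}$ is closed for every $c \in \mathbb{R}$ (equivalently, $\{f < c\}$ is open for every $c$). Since $\mathcal{U}_{\delta} = \{\alpha \in \mathcal{C}_X \;\vert\; \Delta(\alpha) \geq \delta\}$ is exactly the superlevel set $\{\Delta \geq \delta\}$, it suffices to know that $\mathcal{U}_{\delta}$ is closed for \emph{every} $\delta \in \mathbb{R}$, not merely for $\delta > 0$.

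First I would point out that the argument given in the proof of Theorem~\ref{Theorem Uks closed} is in fact run for an arbitrary fixed $\delta \in \mathbb{R}$: one fixes a relatively K\"ahler smooth and dominating test configuration $\mathcal{X}$ for $X$, writes $\mathcal{A}_{\beta} := \mu^*p_1^*\beta + [D]$ as in \cite[Proposition 3.10]{SD1}, and uses that the map $\beta \mapsto \mathrm{DF}(\mathcal{X},\mathcal{A}_{\beta}) - \delta \,\mathrm{J}^{\mathrm{NA}}(\mathcal{X},\mathcal{A}_{\beta})$ is continuous on a neighbourhood of $\alpha$, being a polynomial expression in intersection numbers of classes varying affinely with $\beta$. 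This shows that if $\alpha \notin \mathcal{U}_{\delta}$ then some neighbourhood of $\alpha$ is disjoint from $\mathcal{U}_{\delta}$, so the complement of $\mathcal{U}_{\delta}$ is open. Crucially, nothing in this reasoning used positivity of $\delta$, so $\mathcal{U}_{\delta}$ is closed in the Euclidean topology on $\mathcal{C}_X$ for all $\delta \in \mathbb{R}$.

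Combining the two observations gives the corollary: $\{\Delta \geq c\}$ is closed for every $c \in \mathbb{R}$, hence $\Delta$ is upper semicontinuous on $\mathcal{C}_X$. (Alternatively, one may note that $\Delta(\alpha) = \inf \{\mathrm{DF}(\mathcal{X},\mathcal{A})/\mathrm{J}^{\mathrm{NA}}(\mathcal{X},\mathcal{A})\}$, the infimum taken over relatively K\"ahler test configurations for $(X,\alpha)$ with $\mathrm{J}^{\mathrm{NA}}(\mathcal{X},\mathcal{A}) > 0$ — those with $\mathrm{J}^{\mathrm{NA}} = 0$ impose no constraint, since $\mathrm{DF} \geq 0$ there — and then observe that for each fixed model $\mathcal{X}$ the ratio $\beta \mapsto \mathrm{DF}(\mathcal{X},\mathcal{A}_{\beta})/\mathrm{J}^{\mathrm{NA}}(\mathcal{X},\mathcal{A}_{\beta})$ is continuous near $\alpha$, so $\Delta$ is a pointwise infimum of continuous functions and therefore upper semicontinuous; but the superlevel-set route is cleaner and re-uses Theorem~\ref{Theorem Uks closed} verbatim.) I do not expect any genuine obstacle here; the only point deserving a line of care is the remark that the closedness argument of Theorem~\ref{Theorem Uks closed} applies for every real $\delta$, which is immediate from inspection of that proof.
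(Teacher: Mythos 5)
Your proof is correct and is essentially the paper's argument: the paper states the corollary as an immediate consequence of the proof of Theorem \ref{Theorem Uks closed}, whose closedness argument is indeed carried out for an arbitrary fixed $\delta \in \mathbb{R}$, and upper semicontinuity then follows from the closed-superlevel-set characterization exactly as you say.
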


\noindent We expect that it is also lower semicontinuous (thus proving openness of uniform K-stability as we vary the K\"ahler class). We leave this interesting question for future work. 

\bigskip

\end{document}